\documentclass[letterpaper,10pt]{amsart}
\usepackage{amsmath,amsthm,amstext}
\usepackage{amsfonts,amssymb}
\usepackage{color}
\usepackage{bm}
\usepackage[mathscr]{eucal}
\usepackage[]{hyperref}
\usepackage{setspace}
\usepackage{tikz}

\newtheorem{theorem}{Theorem}[section]
\newtheorem{lemma}[theorem]{Lemma}
\newtheorem{proposition}[theorem]{Proposition}
\newtheorem{corollary}[theorem]{Corollary}
\newtheorem{conjecture}[theorem]{Conjecture}
\theoremstyle{definition}
\newtheorem{remark}[theorem]{Remark}
\newtheorem{definition}[theorem]{Definition}

\numberwithin{equation}{section}

\renewcommand{\l}{\lambda}

\newcommand{\RR}{\ensuremath{\mathbb{R}}}

\newcommand{\sph}{\ensuremath{\mathbb{S}}}
\newcommand{\prtl}{\ensuremath{\partial}}
\newcommand{\p}{\ensuremath{\partial}}
\DeclareMathOperator{\supp}{supp}
\newcommand{\g}{\ensuremath{\mathrm{g}}}
\newcommand{\h}{\ensuremath{\mathrm{h}}}
\newcommand{\To}{\longrightarrow}
\newcommand{\upd}{\ensuremath{\mathrm{d}}}
\newcommand{\defeq}{\ensuremath{\stackrel{\text{def}}{=}}}

\DeclareMathOperator{\Dom}{Dom}
\newcommand{\veps}{\ensuremath{\varepsilon}}
\newcommand{\bbR}{\ensuremath{\mathbb{R}}} 
\newcommand{\bbC}{\ensuremath{\mathbb{C}}} 
\newcommand{\bbZ}{\ensuremath{\mathbb{Z}}} 
\newcommand{\bbS}{\ensuremath{\mathbb{S}}} 
\newcommand{\calU}{\ensuremath{\mathcal{U}}} 
\newcommand{\calUdot}{\ensuremath{\dot{\mathcal{U}}}} 
\newcommand{\cone}{\ensuremath{C(\mathbb{S}^1_\rho)}}
\newcommand{\geom}{\ensuremath{\text{\textup{geom}}}} 
\newcommand{\diff}{\ensuremath{\text{\textup{diff}}}} 
\DeclareMathOperator{\Id}{Id} 
\newcommand{\del}{\ensuremath{\partial}} 
\newcommand{\Dir}{\ensuremath{\text{Dir}}} 
\newcommand{\Neu}{\ensuremath{\text{Neu}}} 

\title[Strichartz estimates on flat cones]{Strichartz estimates for the wave
  equation on flat cones}

\author[M. D. Blair]{Matthew D. Blair}
\address{Department of Mathematics and Statistics, University of New Mexico,
  Albuquerque, NM 87131, USA}
\email{blair@math.unm.edu}

\author[G. A. Ford]{G. Austin Ford}
\address{Department of Mathematics, Northwestern University, Evanston, IL 60208,
  USA}
\email{aford@math.northwestern.edu}

\author[J. L. Marzuola]{Jeremy L. Marzuola}
\address{Department of Mathematics, University of North Carolina -- Chapel Hill,
  Chapel Hill, NC 27599, USA}
\email{marzuola@math.unc.edu}

\begin{document}

\begin{abstract}
  We consider the solution operator for the wave equation on the flat Euclidean
  cone over the circle of radius $\rho > 0$, the manifold $\mathbb{R}_+ \times
  \left(\mathbb{R} \big/ 2\pi\rho \mathbb{Z}\right)$ equipped with the metric
  $\g(r,\theta) = \upd r^2 + r^2 \, \upd\theta^2$.  Using explicit
  representations of the solution operator in regions related to flat wave
  propagation and diffraction by the cone point, we prove dispersive estimates
  and hence scale invariant Strichartz estimates for the wave equation on flat
  cones.  We then show that this yields corresponding inequalities on wedge
  domains, polygons, and Euclidean surfaces with conic singularities.  This in
  turn yields well-posedness results for the nonlinear wave equation on such
  manifolds.  Morawetz estimates on the cone are also treated.
\end{abstract}

\maketitle


\section{Introduction}
\label{sec:Intro}
Let $\cone$ denote the flat cone over the circle of radius $\rho>0$, defined as
the product manifold $\cone=\RR_+ \times\left(\mathbb{R} \big/ 2\pi\rho
  \mathbb{Z}\right)$ equipped with the (incomplete) metric $\g(r,\theta) = \upd
r^2 + r^2 \, \upd\theta^2$.  In this work, we consider solutions $u:\RR \times
\cone \To \bbC$ to the initial value problem for the wave equation on $\cone$,
\begin{equation}
  \label{eqn:wave}
  \left\{
    \begin{aligned}
      \left(D_t^2 - \Delta_\g \right) u(t,r,\theta) &= 0 \\
      u(0,r,\theta) &= f(r,\theta) \\
      \prtl_t u (0,r,\theta) &= g(r,\theta) . \\
    \end{aligned}
  \right.
\end{equation}
Here, we use $\Delta_\g$ to denote the Friedrichs extension of the
Laplace-Beltrami operator acting on
$\mathcal{C}^\infty_c\!\left(C(\bbS^1_\rho)\right)$, and we write $D_t =
\frac{1}{i} \, \prtl_t$ for the Fourier-normalized time derivative.

Solutions to the wave equation on cones and related spaces have been extensively
studied over the years, beginning with the seminal work of Sommerfeld
\cite{Som}.  In the setting of metric cones, Cheeger and Taylor
\cite{CT82,CT82no2} established the propagation of singularities and provided
explicit formulae for solutions in terms of the functional calculus of the
Laplace-Beltrami operator on the cross-section.  This was then expanded upon by
Melrose and Wunsch \cite{melrosewunsch}, who proved a propagation of
singularities theorem for solutions to the wave equation in the more general
setting of conic manifolds.

Given these regularity results, it is now of interest to try to understand the
related decay (or dispersive) properties of solutions.  Such properties are
often important in studying related nonlinear wave equations as the dispersive
effect of the linear evolution can limit nonlinear interactions.  Mathematically
speaking, this effect gives rise to a family of space-time integrability
inequalities known as \emph{Strichartz estimates}.  These estimates take the
form
\begin{equation}\label{eqn:strich}
  \| u \|_{L^p(\RR;L^q(\cone))}
  \lesssim \left\| f \right\|_{\dot{H}^\gamma(\cone)} + \left\| g \right\|_{
    \dot{H}^{\gamma-1}(\cone)}, 
\end{equation}
where the triple $(p,q,\gamma)$ satisfies the scale invariant condition
\begin{equation}\label{eqn:triplescale}
  \frac{1}{p} + \frac{2}{q} = 1 -\gamma
\end{equation}
and the admissibility requirement
\begin{equation}\label{eqn:tripleadmiss}
  \frac{1}{p} + \frac{1}{2q} \leq \frac{1}{4} .
\end{equation}
The indices are always assumed to satisfy $\gamma \geq 0$ and $2 \leq p,q \leq
\infty$.  Additionally, the triple $\left(4,\infty,\frac{3}{4}\right)$ is
excluded since Strichartz estimates are known to fail in this case.

Strichartz inequalities are well-established for constant coefficient wave
equations posed on $\RR^n$ (see Strichartz~\cite{strich77},
Ginibre-Velo~\cite{ginvelo95}, Lindblad-Sogge \cite{LiSo},
Keel-Tao~\cite{keeltao98}, and references contained therein).  However, only
partial progress has been made in establishing these estimates for solutions on
manifolds, domains, or singular spaces such as cones.  In the last case, the
conic singularity affects the flow of energy and complicates many of the known
techniques for establishing these inequalities.  Nonetheless, in~\cite{ford} the
second author developed a representation of the fundamental solution to the
Schr\"odinger equation on $\cone$ and used it to prove the analogous Strichartz
estimates for that equation.  The present authors together with Sebastian Herr
then used this theorem to obtain estimates for the Schr\"odinger equation on
polygonal domains in~\cite{BFHM}.  The main idea in~\cite{ford} was to use the
functional calculus on cones developed by Cheeger~\cite{C79,C83} to calculate an
explicit representation of the Schwartz kernel of $\exp(it\Delta_\g)$.  In
particular, it was shown that the kernel is uniformly bounded by $|t|^{-1}$, and
hence there is the dispersive estimate
\begin{equation}\label{eqn:schroddisp}
  \left\|\exp\!\left(it\Delta_\g\right) f \right\|_{L^\infty(\cone)} \lesssim
  |t|^{-1}\left\|f\right\|_{L^1(\cone)}.
\end{equation}
Such a dispersive estimate is the key to establishing the full range of
Strichartz inequalities for the Schr\"odinger equation.

Explicit representations of the fundamental solution to the wave equation on
$\cone$ were developed by Cheeger and Taylor in~\cite[Section 4]{CT82no2}.
Unlike the fundamental solution to the Schr\"odinger equation, however, the
fundamental solution to the wave equation is unbounded, and as a consequence one
must rework the $L^1 \To L^\infty$ dispersive estimates.  Even on $\RR^2$, there
is no estimate analogous to \eqref{eqn:schroddisp} which is valid for any choice
of initial data, regardless of whether derivatives are incorporated to ensure
scale invariance.  One way to circumvent this problem is to prove $L^1 \To
L^\infty$ estimates for frequency localized solutions, showing that whenever the
initial data $(f,g)$ is spectrally localized to frequencies near $\mu>0$ there
is a replacement for the analogue of \eqref{eqn:schroddisp}.  Namely, Strichartz
estimates may be proved if one shows the following.

\begin{conjecture}
  Suppose $\beta\!\left(\mu^{-1}\sqrt{\smash[b]{\Delta_\g}}\right) f = f$ and
  $\beta\!\left(\mu^{-1}\sqrt{\smash[b]{\Delta_\g}}\right) g = g$ for some
  smooth cutoff $\beta$ with $\supp(\beta) \subset \left(\frac{1}{4}, 4\right)$.
  Then
  \begin{align}
    \left\| \calU(t) g \right\|_{L^\infty(\cone)} &\lesssim \mu \left(1+\mu
      |t|\right)^{-1/2}
    \left\|g\right\|_{L^1(\cone)} \label{eqn:sinedisp} \\
    \left\| \calUdot(t)f \right\|_{L^\infty(\cone)}&\lesssim \mu \left(1+\mu |t|
    \right)^{-1/2}\left(\mu \left\|f\right\|_{L^1(\cone)} + \left\| \nabla_\g
        f\right\|_{L^1(\cone)} \right), \label{eqn:cosinedisp}
  \end{align}
  where we use the abbreviations
  \begin{equation}\label{eqn:abbreviate}
    \calU(t) \defeq
    \frac{\sin\left(t\sqrt{ \smash[b]{\Delta_\g}
        }\right)}{\sqrt{\smash[b]{\Delta_\g}}} 
    \qquad 
    \text{and} \qquad \calUdot(t) \defeq \cos\left(t\sqrt{\Delta_\g}\right) .
  \end{equation}
\end{conjecture}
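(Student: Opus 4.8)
The plan is to reduce both inequalities to pointwise bounds on Schwartz kernels and then to split those kernels, following Cheeger and Taylor, into a part coming from undiffracted (winding) geodesics and a part carrying the diffraction off the cone point. Write $\beta_\mu=\beta(\mu^{-1}\sqrt{\smash[b]{\Delta_\g}})$. By duality, \eqref{eqn:sinedisp} is equivalent to $\|\mathcal{K}_{t,\mu}\|_{L^\infty(\cone\times\cone)}\lesssim\mu(1+\mu|t|)^{-1/2}$, where $\mathcal{K}_{t,\mu}$ is the Schwartz kernel of $\calU(t)\beta_\mu$, and \eqref{eqn:cosinedisp} follows from the bound $\|\text{kernel of }\calUdot(t)\beta_\mu\|_{L^\infty}\lesssim\mu^2(1+\mu|t|)^{-1/2}$, since trivially $\mu^2\|f\|_{L^1}\le\mu\bigl(\mu\|f\|_{L^1}+\|\nabla_\g f\|_{L^1}\bigr)$. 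Two easy reductions: first, when $|t|\le\mu^{-1}$ one writes $\calU(t)\beta_\mu=t\,\psi_t(\sqrt{\smash[b]{\Delta_\g}})$ with $\psi_t(\lambda)=\tfrac{\sin(t\lambda)}{t\lambda}\beta(\lambda/\mu)$ a bump at scale $\mu$ uniformly in such $t$, whence Bernstein's inequality (valid on $\cone$ via finite propagation speed and Gaussian heat-kernel bounds) gives $\|\calU(t)\beta_\mu\|_{L^1\to L^\infty}\lesssim|t|\mu^2\le\mu$, and similarly $\|\calUdot(t)\beta_\mu\|_{L^1\to L^\infty}\lesssim\mu^2$; second, since $\Delta_\g$ is exactly homogeneous of degree $-2$ under the dilations $(r,\theta)\mapsto(\lambda r,\theta)$, the estimates \eqref{eqn:sinedisp}--\eqref{eqn:cosinedisp} are scale-invariant, so in the remaining range $\mu|t|\ge1$ we may normalize to $|t|=1$, $\mu\ge1$, where the targets read $\lesssim\mu^{1/2}$ and $\lesssim\mu^{2}$ respectively.

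Away from the cone point $\cone$ is flat, so by finite propagation speed and the representation in \cite[Section 4]{CT82no2} we may write $\calU(t)=\calU^{\geom}(t)+\calU^{\diff}(t)$ (and likewise for $\calUdot$) in such a way that the kernel of $\calU^{\geom}(t)\beta_\mu$ is a sum of at most $O(1+\rho^{-1})$ terms, each equal to the frequency-$\mu$-localized free half-wave kernel on $\RR^2$ written in polar coordinates with radii $r,r'$ and angular separation $\theta-\theta'+2\pi\rho n$, taken over the integers $n$ with $|\theta-\theta'+2\pi\rho n|<\pi$. Each such term is $\lesssim\mu(1+\mu|t|)^{-1/2}$ by the classical two-dimensional frequency-localized dispersive estimate --- obtained by stationary phase applied to $\int_{\RR^2}e^{i(y\cdot\xi\pm t|\xi|)}|\xi|^{-1}\beta(|\xi|/\mu)\,\upd\xi$ --- and the sum costs only the $\rho$-dependent constant $O(1+\rho^{-1})$, which is harmless since $\rho$ is fixed. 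The same computation (now with the free cosine kernel, bounded by $\mu^2(1+\mu|t|)^{-1/2}$) handles $\calUdot^{\geom}(t)$. Thus the geometric part is routine.

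The crux is the diffractive part. The Cheeger--Taylor formula represents the kernel of $\calU^{\diff}(t)\beta_\mu$ as a one-dimensional Sommerfeld--Carslaw contour integral, of the schematic form
\[
  \mathcal{K}^{\diff}_{t,\mu}(r,\theta,r',\theta')=c_\rho\int_{\RR}E_\mu\bigl(t,R(s)\bigr)\,\mathfrak{d}_\rho\bigl(s;\theta-\theta'\bigr)\,\upd s,
\]
where $R(s)=\sqrt{\smash[b]{r^2+r'^2+2rr'\cosh s}}\ge r+r'$ is the complexified separation, $E_\mu(t,R)$ is --- up to harmless factors --- the frequency-$\mu$-localized free half-wave kernel on $\RR^2$ at separation $R$, so $\|E_\mu(t,\cdot)\|_{L^\infty}\lesssim\mu(1+\mu|t|)^{-1/2}$, and the diffraction density $\mathfrak{d}_\rho(\cdot;\phi)$ is smooth and exponentially decaying in $s$, with $\|\mathfrak{d}_\rho(\cdot;\phi)\|_{L^1(\RR_s)}$ bounded uniformly in $\phi$ so long as $\phi$ stays a fixed distance from the shadow angles $\phi\equiv\pm\pi\pmod{2\pi\rho}$, where it develops a simple pole. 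Consequently, on the complement of any fixed neighborhood of the shadow boundary the crude estimate $|\mathcal{K}^{\diff}_{t,\mu}|\le c_\rho\|E_\mu(t,\cdot)\|_{L^\infty}\|\mathfrak{d}_\rho(\cdot;\theta-\theta')\|_{L^1_s}\lesssim_\rho\mu(1+\mu|t|)^{-1/2}$ already finishes the argument. The main obstacle is a neighborhood of the shadow boundary $\theta-\theta'\equiv\pm\pi\pmod{2\pi\rho}$: there the $L^1_s$ norm diverges, $\calU^{\diff}(t)$ is individually unbounded, and moreover $s\mapsto R(s)$ has a fold, its two critical points $\pm s_0$ relative to the level $R(s)=|t|$ (with $\cosh s_0=\tfrac{t^2-r^2-r'^2}{2rr'}$) coalescing at $s=0$ as $|t|\downarrow r+r'$. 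I expect this to be the hard part of the proof. The resolution --- in the spirit of the diffractive parametrix analyses of \cite{CT82no2,melrosewunsch,ford} --- is to combine $\calU^{\diff}(t)$ with the geometric summand that is switching on (or off) at that value of $\theta-\theta'$: after deforming the contour and expanding $E_\mu(t,R(s))$ by stationary phase near the (possibly degenerate) critical points $\pm s_0$, the residue of $\mathfrak{d}_\rho$ at its pole cancels exactly against the boundary contribution of the truncated geometric sum, so that the kernel of $\calU^{\geom}(t)+\calU^{\diff}(t)$ extends across the shadow boundary with the uniform bound $\mu(1+\mu|t|)^{-1/2}$; the fold degeneracy only helps, since near it $r+r'\sim|t|$ while $\mu|t|\ge1$ already.

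For the cosine propagator the Cheeger--Taylor kernel has the identical geometric/diffractive structure with $E_\mu(t,R)$ replaced by the frequency-$\mu$-localized free cosine kernel on $\RR^2$ (equivalently $\p_t$ of the sine kernel), which is $\lesssim\mu^2(1+\mu|t|)^{-1/2}$; the geometric bound transcribes verbatim, and in the diffractive integral the extra factor produced by differentiating $R(s)$ is absorbed by one integration by parts in $s$, giving $\|\calUdot(t)\beta_\mu\|_{L^1\to L^\infty}\lesssim_\rho\mu^2(1+\mu|t|)^{-1/2}$ and hence \eqref{eqn:cosinedisp}; \eqref{eqn:sinedisp} is the analogous statement for $\calU(t)$. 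In summary, everything except the behavior near the shadow boundary is either classical ($\RR^2$ stationary phase, Bernstein, the scaling reduction) or a direct manipulation of the explicit contour integral, and the combined geometric--diffractive estimate across the shadow --- together with the careful treatment of the fold in that contour integral --- is where the real work lies.
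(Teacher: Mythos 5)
Your proposal leaves the two decisive steps asserted rather than proven, and in both cases the assertion is problematic. First, the shadow-boundary analysis: you claim that near $\theta_1-\theta_2\equiv\pm\pi\pmod{2\pi\rho}$ the $L^1_s$ norm of the diffraction density diverges, that the diffractive kernel is individually unbounded there, and that one must exhibit an exact residue cancellation against the switching geometric summand (plus a fold/stationary-phase analysis). The premise is incorrect: for the density $\sin(\varphi)\big/\bigl(\cosh(s/\rho)-\cos(\varphi)\bigr)$ the vanishing numerator compensates the pole, since near $\varphi=0$ the integrand is comparable to $|\varphi|/(\varphi^2+s^2)$, whose $s$-integral is bounded by an arctangent uniformly in $\varphi$ (this is exactly the estimate \eqref{eqn:arctan} in the paper). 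Consequently the diffractive kernel alone obeys the pointwise bound \eqref{eqn:ptwisemain} of Lemma~\ref{lem:diffpointwise}, uniformly in the angles, and no geometric--diffractive cancellation is needed; the genuine work in the paper is instead the uniform treatment of the square-root singularity at $s=\beta$ via the splitting of the $s$-integral at a carefully chosen $M(\beta)$ in the two regimes $\beta\geq 1$ and $\beta\leq 1$, together with the composition with the spectral cutoff, which is handled not by stationary phase but by Davies' Gaussian-heat-kernel bound on $K_{\beta(\sqrt{\smash[b]{\Delta_\g}})}$ and a tiling of the cone into unit polar rectangles (Lemma~\ref{lem:geomrectangle}). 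Your identification of the kernel of $\calU^{\geom}(t)\beta_\mu$ with a finite sum of free $\RR^2$ frequency-localized half-wave kernels also glosses over the fact that $\beta_\mu$ is the cone's spectral cutoff and does not commute in any simple way with the geometric/diffractive splitting.

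Second, and more seriously, the cosine estimate \eqref{eqn:cosinedisp} is not actually proved by your argument: ``the extra factor \dots is absorbed by one integration by parts in $s$'' is the entire content of that half of the statement, and the obstacle is real. Differentiating the sine kernel in $t$ produces a $\bigl(t^2-d_\g^2\bigr)^{-3/2}$-type singularity along the light cone which is not locally integrable, so neither the $L^1_s$ bound on the diffractive integrand nor the average-over-unit-rectangles strategy that works for $\calU(t)$ carries over; a genuine oscillation/cancellation argument would be required. This is precisely why the statement is labeled a Conjecture: the paper proves only \eqref{eqn:sinedisp}, and obtains the Strichartz estimates for $\calUdot(t)$ from those for $\calU(t)$ via the Hilbert transform in time (Remark~\ref{rem:Hilbert-transform}) rather than through a dispersive bound of the form \eqref{eqn:cosinedisp}. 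So while your treatment of the geometric term and the diffractive term away from the shadow boundary is broadly consistent with the paper's, the two places you flag as ``where the real work lies'' are exactly the places where your proposal supplies no proof, and one of them (the cosine bound) is left open even by the paper.
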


Our main theorem in this work is that the former estimate \eqref{eqn:sinedisp}
holds and, by making use of the Hilbert transform, that this estimate is
sufficient to yield the full range of Strichartz estimates.
\begin{theorem}\label{thm:strich}
  Suppose $(f,g) \in \dot{H}^\gamma\!\left(\cone\right)\times
  \dot{H}^{\gamma-1}\!\left(\cone\right)$.  Then,
  \begin{align}
    \left\|\calU(t)g\right\|_{L^p(\RR;L^q(\cone))}&\lesssim \left\|
      g\right\|_{\dot{H}^{\gamma-1}(\cone)} , \label{eqn:sinestrich} \\
    \left\|\calUdot(t)f\right\|_{L^p(\RR;L^q(\cone))}&\lesssim \left\| f
    \right\|_{\dot{H}^\gamma(\cone)}, \label{eqn:cosinestrich}
  \end{align}
  for any triple $(p,q,\gamma)$
  satisfying~\eqref{eqn:triplescale},~\eqref{eqn:tripleadmiss}, and $2\leq p,q
  \leq \infty$ provided $(p,q,\gamma) \neq \left(4,\infty,\frac{3}{4}\right)$.
  Hence, solutions $u$ to the wave equation IVP \eqref{eqn:wave} satisfy the
  Strichartz estimates~\eqref{eqn:strich}.
\end{theorem}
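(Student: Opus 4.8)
The plan is to deduce the two space-time estimates \eqref{eqn:sinestrich}--\eqref{eqn:cosinestrich} from a single dispersive input -- the frequency-localized bound \eqref{eqn:sinedisp} -- together with the $L^2$ conservation law for the half-wave propagator, the abstract Strichartz machinery of Keel--Tao, a Littlewood--Paley decomposition, and the boundedness of the Hilbert transform in time. The bulk of the work is thus to establish \eqref{eqn:sinedisp}. For that I would use the Cheeger--Taylor functional calculus on the cone \cite{CT82,CT82no2}: separating variables and diagonalizing the cross-sectional Laplacian on $\bbS^1_\rho$ through the characters $e^{ik\theta/\rho}$, any spectral multiplier has Schwartz kernel
\[
  F\!\left(\sqrt{\smash[b]{\Delta_\g}}\right)\!(r,\theta;r',\theta')
  = \frac{1}{2\pi\rho}\sum_{k\in\bbZ} e^{ik(\theta-\theta')/\rho}
  \int_0^\infty F(\lambda)\,J_{|k|/\rho}(\lambda r)\,J_{|k|/\rho}(\lambda r')\,\lambda\,\upd\lambda,
\]
which in our case has $F(\lambda)=\beta(\lambda/\mu)\sin(t\lambda)/\lambda$. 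Following the classical Sommerfeld--Carslaw resummation of this series (and its use by Cheeger--Taylor), one rewrites the kernel, in each region of configuration space, as a finite sum of \emph{geometric} terms -- each a piece of the free two-dimensional wave propagator $\sin(t\sqrt{-\Delta})/\sqrt{-\Delta}$ localized to frequency $\mu$, between the two points and their finitely many (bounded in terms of $\rho$ alone) ``unrolled'' images across the cone tip -- plus a single \emph{diffracted} term, an explicit oscillatory integral in the spectral variable $\lambda$ with phase essentially $\lambda(|t|-r-r')$ and amplitude built from the Bessel asymptotics and the bounded Sommerfeld angular diffraction coefficient. For the geometric terms the bound $\mu(1+\mu|t|)^{-1/2}$ is exactly the classical $\RR^2$ stationary-phase estimate.

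The main obstacle is the diffracted term: one must show it too obeys $\mu(1+\mu|t|)^{-1/2}$, uniformly in $(r,\theta,r',\theta')$. I would do this by a case analysis on the relative sizes of $\mu r$, $\mu r'$, and $\mu|t|$. When $\mu r,\mu r'\gtrsim 1$, the leading Bessel asymptotics turn the diffracted integral into (a multiple of) $\mu^{1/2}(r+r')^{-1/2}$ concentrated near the diffracted front $|t|=r+r'$, which is comparable to $\mu(1+\mu|t|)^{-1/2}$ there. When $\min(\mu r,\mu r')\lesssim 1$, the factor $(\lambda r)^{|k|/\rho}$ in the small-argument Bessel expansion forces the $k$-sum to collapse to boundedly many modes and repeated integration by parts in $\lambda$ produces rapid decay, which is far more than enough. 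The delicate configurations are the shadow boundaries $\theta-\theta'\approx\pm\pi$, where the diffraction coefficient has a pole: there one must not separate the geometric and diffracted pieces, but rather estimate the (regular) combined kernel directly, as in the Cheeger--Taylor/Melrose--Wunsch analysis. This case analysis is carried out in parallel with the Schr\"odinger treatment of \cite{ford}, but with the several additional regimes forced by the unboundedness of the wave kernel.

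Granting \eqref{eqn:sinedisp}, I would conclude as follows. First record the frequency-localized half-wave dispersive bound $\bigl\|e^{it\sqrt{\Delta_\g}}\beta(\mu^{-1}\sqrt{\Delta_\g})\bigr\|_{L^1(\cone)\to L^\infty(\cone)}\lesssim\mu^2(1+\mu|t|)^{-1/2}$, which follows from \eqref{eqn:sinedisp} and its cosine counterpart -- the latter coming from the same representation with one extra power of $\lambda\sim\mu$ in the amplitude -- once one knows that the bump multipliers $\beta(\mu^{-1}\sqrt{\Delta_\g})$ are bounded on $L^1(\cone)$ uniformly in $\mu$ (a consequence of finite propagation speed for $\cos(t\sqrt{\Delta_\g})$, rapid decay of $\widehat\beta$, and the quadratic volume growth of the cone). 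Combining this with $\bigl\|e^{it\sqrt{\Delta_\g}}\bigr\|_{L^2\to L^2}=1$ and interpolating, the non-endpoint abstract Strichartz theorem of Keel--Tao -- applied with the family of decay exponents $\sigma\in(0,\tfrac12]$ allowed by the dispersive bound, so as to cover the whole admissible region $\tfrac1p+\tfrac1{2q}\le\tfrac14$ (only the non-endpoint estimates are needed, since $\sigma\le\tfrac12$ forces $p\ge 4$) -- gives, after writing $\calU(t)=\tfrac1{2i}\bigl(e^{it\sqrt{\Delta_\g}}-e^{-it\sqrt{\Delta_\g}}\bigr)\bigl(\sqrt{\Delta_\g}\bigr)^{-1}$, the frequency-localized estimate
\[
  \bigl\|\calU(t)\beta(\mu^{-1}\sqrt{\Delta_\g})g\bigr\|_{L^p(\RR;L^q(\cone))}
  \lesssim \mu^{\gamma-1}\,\bigl\|\beta(\mu^{-1}\sqrt{\Delta_\g})g\bigr\|_{L^2(\cone)},
\]
where the exponent of $\mu$ is dictated precisely by the scaling relation \eqref{eqn:triplescale}. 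Summing dyadically in $\mu$ via the Littlewood--Paley square-function inequality on $\cone$ and Minkowski's inequality (valid for $2\le p,q<\infty$) yields \eqref{eqn:sinestrich}; the remaining admissible triples with $q=\infty$ follow from the same scheme together with elementary Sobolev-embedding and summation arguments. Finally, \eqref{eqn:cosinestrich} requires no further dispersive input: since $\sqrt{\Delta_\g}$ has nonnegative spectrum, the functional calculus gives the identity $\calUdot(t)f=-\sqrt{\Delta_\g}\,\bigl(\mathcal H_t[\calU(\cdot)f]\bigr)(t)$ with $\mathcal H_t$ the Hilbert transform in time, and $\mathcal H_t$ is bounded on $L^p(\RR;L^q(\cone))$ for $1<p,q<\infty$ and commutes with the spatial operators, so \eqref{eqn:cosinestrich} is \eqref{eqn:sinestrich} applied to $\sqrt{\Delta_\g}f$ (with the endpoints in $p$, and $q=\infty$, again treated directly). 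Since $u(t)=\calUdot(t)f+\calU(t)g$ solves \eqref{eqn:wave}, adding \eqref{eqn:sinestrich} and \eqref{eqn:cosinestrich} gives \eqref{eqn:strich}.
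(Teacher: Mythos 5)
Your scaffolding — Littlewood--Paley square-function estimates, rescaling to unit frequency, a $TT^*$ argument, and the Hilbert transform in time to pass from the sine to the cosine evolution — is exactly the paper's reduction. The genuine gap is in the step that carries the actual content of the theorem: the frequency-localized dispersive bound \eqref{eqn:sinedisp}. You propose to prove it on the spectral side via the Sommerfeld--Carslaw resummation, Bessel asymptotics, and stationary phase, and you defer precisely the hardest configurations — the pole of the angular diffraction coefficient at the shadow boundaries $\theta_1-\theta_2\approx\pm\pi$, and the behavior at the diffracted front $t\approx r_1+r_2$, where the unregularized kernel blows up like $\left[t^2-(r_1+r_2)^2\right]_+^{-1/2}$ (cf.\ \eqref{eqn:ptwisemain}) and must be smoothed by the cutoff $\beta(\mu^{-1}\sqrt{\Delta_\g})$ uniformly in all of $t,r_1,r_2,\theta_1,\theta_2,\mu$ — to ``as in Cheeger--Taylor/Melrose--Wunsch'' and ``in parallel with the Schr\"odinger treatment.'' Those references do not contain the uniform sup-norm bounds you need (they give propagation of singularities, or a Schr\"odinger kernel that is bounded, which is exactly what fails for the wave kernel), and the paper explicitly states that the oscillatory-integral route appears very difficult on the cone; it instead works entirely in physical space, splitting the explicit Cheeger--Taylor kernel as in \eqref{eqn:gplusd}, using the Davies bound $\left(1+d_\g^2\right)^{-N}$ on the kernel of $\beta\!\left(\sqrt{\Delta_\g}\right)$ coming from Gaussian heat-kernel bounds, tiling the cone by unit-scale polar rectangles, and proving Lemmas~\ref{lem:geomrectangle} and \ref{lem:diffpointwise} to show the regularized kernel is controlled by its average over unit-size sets. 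As written, your plan asserts this central estimate rather than establishing it.

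A second concrete problem is your route through the half-wave dispersive bound $\bigl\|e^{it\sqrt{\Delta_\g}}\beta(\mu^{-1}\sqrt{\Delta_\g})\bigr\|_{L^1\to L^\infty}\lesssim\mu^2(1+\mu|t|)^{-1/2}$: this requires the cosine dispersive estimate \eqref{eqn:cosinedisp}, which you dismiss as ``one extra power of $\lambda$.'' Multiplying the spectral amplitude by $\lambda$ (equivalently, differentiating the sine kernel in $t$) worsens the singularities at both the geometric and diffracted fronts, and the paper deliberately leaves \eqref{eqn:cosinedisp} unproven — it is part of the stated Conjecture; the whole point of the Hilbert-transform remark is that \eqref{eqn:cosinestrich} follows from \eqref{eqn:sinestrich} at the level of space-time norms, so that only the sine dispersive bound is ever needed. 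You could repair this part by running the $TT^*$ argument on $\beta\!\left(\sqrt{\Delta_\g}\right)\calU(t)$ directly, as the paper does, and retaining your Hilbert-transform step; but the first gap, the proof of \eqref{eqn:sinedisp} itself, remains the essential missing piece.
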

\noindent We define the homogeneous Sobolev spaces appearing in this theorem via
the spectral resolution of the Laplacian; the details are discussed in Section
\ref{sec:Spaces}.

Using duality and an application of the Christ-Kiselev lemma \cite{ChKi}, we
will also establish inhomogeneous Strichartz estimates.
\begin{corollary}\label{thm:inhomogcorr}
  Suppose $2 \leq p,q < \infty$ and $2 \leq \tilde{p},\tilde{q}< \infty$ satisfy
  \eqref{eqn:tripleadmiss} and that
  \begin{equation}\label{eqn:inhomogadmiss}
    \frac{1}{p} + \frac{2}{q} =\frac{1}{\tilde{p}'}+\frac{2}{\tilde{q}'}-2 =
    1-\gamma 
  \end{equation}
  with $(\cdot)'$ denoting the H\"older-dual exponent, i.e.~ $\frac{1}{p} +
  \frac{1}{p'} = 1$.  If the inhomogeneity $F$ is in $L^{\tilde{p}'}\!\left(\RR;
    L^{\tilde{q}'}\!\left(\cone\right)\right)$ and
  \begin{equation*}
    w(t,r,\theta) = \int_{-\infty}^t
    \left(\calU(t-s)F(s,\cdot)\right)\!(r,\theta)\,ds ,
  \end{equation*}
  then
  \begin{multline}\label{eqn:globalinhom}
    \| w \|_{L^p(\RR;L^q(\cone))} + \left\| \left(w,\prtl_t w\right)
    \right\|_{L^\infty(\RR;\dot{H}^\gamma(\cone) \times
      \dot{H}^{\gamma-1}(\cone)) }
    \\
    \mbox{} \lesssim \| F \|_{L^{\tilde{p}'}(\RR; L^{\tilde{q}'}(\cone))} .
  \end{multline}
\end{corollary}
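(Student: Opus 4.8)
The plan is to obtain Corollary~\ref{thm:inhomogcorr} from the homogeneous estimates of Theorem~\ref{thm:strich} by duality, followed by an application of the Christ--Kiselev lemma to replace the full-line time integral by the retarded one. Write $\Lambda = \sqrt{\smash[b]{\Delta_\g}}$, so that $\calU(t) = \Lambda^{-1}\sin(t\Lambda)$, $\calUdot(t) = \cos(t\Lambda)$, and $e^{\pm it\Lambda} = \calUdot(t) \pm i\Lambda\calU(t)$. Adding~\eqref{eqn:sinestrich} (applied to $\Lambda h$, using $\Lambda\calU(t) = \sin(t\Lambda)$) and~\eqref{eqn:cosinestrich} shows that the half-wave propagators satisfy
\begin{equation}\label{eqn:halfwave}
  \left\| e^{\pm it\Lambda} h \right\|_{L^p(\RR;L^q(\cone))} \lesssim \left\| h \right\|_{\dot H^\gamma(\cone)}
\end{equation}
for every admissible triple $(p,q,\gamma)$. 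Since $e^{\pm it\Lambda}$ is unitary on $L^2(\cone)$, the adjoint of the bounded map $h \mapsto e^{\pm it\Lambda}\Lambda^{-\gamma}h$ from $L^2$ to $L^p(\RR;L^q(\cone))$ is bounded from $L^{p'}(\RR;L^{q'}(\cone))$ to $L^2$, which is exactly the dual estimate
\begin{equation}\label{eqn:halfwavedual}
  \left\| \int_I e^{\mp is\Lambda} G(s)\,\upd s \right\|_{\dot H^{-\gamma}(\cone)} \lesssim \left\| G \right\|_{L^{p'}(I;L^{q'}(\cone))};
\end{equation}
here $I = \RR$, but the case of a half-line $I = (-\infty,t)$ follows at once by replacing $G$ with its restriction to $I$.

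The key bookkeeping observation is that, for admissible $(p,q,\gamma)$ and $(\tilde p,\tilde q,\tilde\gamma)$ (the latter with $\tilde\gamma$ fixed by $\tfrac1{\tilde p}+\tfrac2{\tilde q}=1-\tilde\gamma$), the hypothesis~\eqref{eqn:inhomogadmiss} is precisely the identity $\gamma+\tilde\gamma=1$. Decomposing $\calU$ into half-waves, the full-line operator is
\begin{equation*}
  \widetilde w(t) \defeq \int_\RR \calU(t-s)F(s)\,\upd s = \frac{1}{2i}\sum_{\pm}\pm\, e^{\pm it\Lambda}\,\Lambda^{-1}\!\int_\RR e^{\mp is\Lambda}F(s)\,\upd s .
\end{equation*}
For each sign, \eqref{eqn:halfwavedual} for the tilded pair places $\int_\RR e^{\mp is\Lambda}F(s)\,\upd s$ in $\dot H^{-\tilde\gamma}=\dot H^{\gamma-1}$ with norm $\lesssim\|F\|_{L^{\tilde p'}(\RR;L^{\tilde q'}(\cone))}$, so $\Lambda^{-1}$ times it lies in $\dot H^{\gamma}$, and then~\eqref{eqn:halfwave} for the untilded pair yields $\|\widetilde w\|_{L^p(\RR;L^q(\cone))}\lesssim\|F\|_{L^{\tilde p'}(\RR;L^{\tilde q'}(\cone))}$. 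For the energy term, the same two ingredients applied with the inner integral over $(-\infty,t)$ --- now using unitarity of $e^{\pm it\Lambda}$ on $L^2$ in place of~\eqref{eqn:halfwave} for the outer step, together with $\gamma-1=-\tilde\gamma$ --- give $\sup_t\|w(t)\|_{\dot H^\gamma(\cone)}\lesssim\|F\|_{L^{\tilde p'}(\RR;L^{\tilde q'}(\cone))}$; differentiating under the integral (using $\calU(0)=0$ and $\del_t\calU=\calUdot$) gives $\del_t w(t)=\int_{-\infty}^t\calUdot(t-s)F(s)\,\upd s$, and the analogous computation with $\calUdot$ in place of $\calU$ bounds $\sup_t\|\del_t w(t)\|_{\dot H^{\gamma-1}(\cone)}$.

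It remains to pass from $\widetilde w$ to $w(t)=\int_{-\infty}^t\calU(t-s)F(s)\,\upd s$ in the space-time norm; this is exactly the setting of the Christ--Kiselev lemma~\cite{ChKi} applied to the operator $F\mapsto\widetilde w$ with operator-valued kernel $\calU(t-s)$, and it applies because $p>\tilde p'$. The latter inequality is strict: admissibility~\eqref{eqn:tripleadmiss} together with $q,\tilde q\ge2$ forces $p,\tilde p\ge4$, hence $\tilde p'\le\tfrac43<4\le p$ --- and it is precisely this gap that is lost at the excluded endpoints $p,q,\tilde p,\tilde q=\infty$. As is standard, one first runs all of the above for $F$ smooth, compactly supported in $t$, and spectrally localized to frequencies in a compact subset of $(0,\infty)$, so that the fractional powers of $\Lambda$ act boundedly and every interchange of integral and operator is justified, then extends by density using the spectral definition of the homogeneous Sobolev spaces from Section~\ref{sec:Spaces}. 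The only genuine point requiring care is the index bookkeeping in the duality step --- arranging the powers of $\Lambda$ so that the Sobolev exponents telescope through $\gamma+\tilde\gamma=1$ --- together with the strict inequality $p>\tilde p'$ needed for Christ--Kiselev; the analytic substance has already been spent on the dispersive bound~\eqref{eqn:sinedisp} and Theorem~\ref{thm:strich}.
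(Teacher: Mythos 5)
Your argument is correct and follows essentially the same route as the paper: half-wave propagators, a duality ($TT^*$-type) step, and the Christ--Kiselev lemma justified by the strict inequality $\tilde{p}' < p$ coming from admissibility. The only organizational difference is that the paper runs this scheme at unit frequency for $\beta\!\left(\sqrt{\smash[b]{\Delta_\g}}\right)$-localized data and then recovers the general case by scaling and Littlewood--Paley summation, whereas you invoke Theorem~\ref{thm:strich} directly and handle the Sobolev bookkeeping with powers of $\sqrt{\smash[b]{\Delta_\g}}$ via $\gamma + \tilde{\gamma} = 1$, also making the $L^\infty_t$ energy piece explicit through unitarity.
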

\noindent Note that by localizing the driving force to $[0,\infty)$, we obtain
estimates for $w$ satisfying $\left(D^2_t -\Delta_\g\right)w = F$ with vanishing
initial data.

As a byproduct of the proof of Theorem~\ref{thm:strich}, we also show local
estimates on solutions which instead involve inhomogeneous Sobolev spaces on the
right-hand side. These estimates will play a role in Section~\ref{sec:apps},
where local estimates on planar domains are developed.

\begin{corollary}\label{thm:localcorr}
  Let $u$ be a solution to the inhomogeneous problem,
  \begin{equation}
    \left\{
      \begin{aligned}
        \left(D_t^2 - \Delta_\g \right) u(t,r,\theta) &= F(t,r,\theta) \\
        u(0,r,\theta) &= f(r,\theta) \\
        \prtl_t u (0,r,\theta) &= g(r,\theta) . \\
      \end{aligned}
    \right.
  \end{equation}
  Suppose that $2 \leq p,q < \infty$ and $2 \leq \tilde{p},\tilde{q}< \infty$
  satisfy \eqref{eqn:tripleadmiss} and \eqref{eqn:inhomogadmiss}.  Then for some
  implicit constant depending on $T$,
  \begin{multline}\label{eqn:localstrich}
    \| u \|_{L^p([-T,T];L^q(\cone))} +\| (u,\prtl_t u)
    \|_{L^\infty([-T,T];H^\gamma(\cone) \times H^{\gamma-1}(\cone)) }\\ \lesssim
    \|(f,g)\|_{H^\gamma(\cone)\times H^{\gamma-1}(\cone)} + \| F
    \|_{L^{\tilde{p}'}([-T,T]; L^{\tilde{q}'}(\cone))},
  \end{multline}
  whenever the right hand side is finite.
\end{corollary}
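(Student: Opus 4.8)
The plan is to reduce the statement to the homogeneous Strichartz estimate of Theorem~\ref{thm:strich} and the global inhomogeneous estimate of Corollary~\ref{thm:inhomogcorr}; the only genuinely new issue is the passage from homogeneous to inhomogeneous Sobolev norms over the finite interval $[-T,T]$, and all constants below are permitted to depend on $T$. By Duhamel's formula the solution is $u(t) = \calUdot(t)f + \calU(t)g + \int_0^t\calU(t-s)F(s)\,ds$, and by time-reversal symmetry (truncating $F$ to $[0,T]$ or to $[-T,0]$) it suffices to bound each term for $t\in[0,T]$. I would fix a smooth cutoff $\beta_0$ with $\beta_0 \equiv 1$ on $[0,1]$ and $\supp \beta_0 \subset [0,2)$, set $P_{\mathrm{lo}} = \beta_0\!\left(\sqrt{\smash[b]{\Delta_\g}}\right)$ and $P_{\mathrm{hi}} = \Id - P_{\mathrm{lo}}$, and split each of $f$, $g$, $F$ into its $P_{\mathrm{lo}}$ and $P_{\mathrm{hi}}$ pieces. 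These multipliers commute with $\calU(t)$ and $\calUdot(t)$; moreover they are bounded on every $L^r(\cone)$, $1 < r < \infty$, and satisfy Bernstein's inequality $\|P_{\mathrm{lo}}h\|_{L^b(\cone)} \lesssim \|h\|_{L^a(\cone)}$ for $1 \le a \le b \le \infty$, both facts being consequences of the Gaussian upper bounds for the heat kernel on the flat cone together with the associated Mikhlin--H\"ormander spectral multiplier theorem.

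For the high-frequency data $P_{\mathrm{hi}}f$, $P_{\mathrm{hi}}g$ and forcing $P_{\mathrm{hi}}F$: since $\lambda^{2\sigma} \le (1+\lambda^2)^\sigma \le 2^{|\sigma|}\lambda^{2\sigma}$ for $\lambda \ge 1$, the homogeneous and inhomogeneous Sobolev norms of order $\sigma$ are comparable on functions spectrally supported in $[1,\infty)$, so $\|P_{\mathrm{hi}}f\|_{\dot H^\gamma} \lesssim \|f\|_{H^\gamma}$, $\|P_{\mathrm{hi}}g\|_{\dot H^{\gamma-1}} \lesssim \|g\|_{H^{\gamma-1}}$, and $\|P_{\mathrm{hi}}F\|_{L^{\tilde p'}([-T,T];L^{\tilde q'})} \lesssim \|F\|_{L^{\tilde p'}([-T,T];L^{\tilde q'})}$. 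Theorem~\ref{thm:strich} then controls the $L^pL^q$ norm of the two high-frequency data terms; Corollary~\ref{thm:inhomogcorr}, applied to the zero-extension of $P_{\mathrm{hi}}F\big|_{[0,T]}$, controls the $L^pL^q$ and $L^\infty(\dot H^\gamma\times\dot H^{\gamma-1})$ norms of the high-frequency Duhamel term; and the elementary functional-calculus bounds $\|\calUdot(t)h\|_{\dot H^\sigma} \le \|h\|_{\dot H^\sigma}$, $\|\partial_t\calUdot(t)h\|_{\dot H^{\sigma-1}} \le \|h\|_{\dot H^\sigma}$, $\|\calU(t)h\|_{\dot H^\sigma} \le \|h\|_{\dot H^{\sigma-1}}$, $\|\partial_t\calU(t)h\|_{\dot H^{\sigma-1}} \le \|h\|_{\dot H^{\sigma-1}}$ (each immediate from $|\sin|,|\cos|\le 1$) control the $L^\infty(\dot H^\gamma\times\dot H^{\gamma-1})$ norms of the data terms. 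Since the whole high-frequency contribution stays spectrally supported in $[1,\infty)$ for all $t$, its $L^\infty(\dot H^\gamma\times\dot H^{\gamma-1})$ norm is comparable to its $L^\infty(H^\gamma\times H^{\gamma-1})$ norm, which closes this case.

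For the low-frequency contribution $v$, built from $P_{\mathrm{lo}}f$, $P_{\mathrm{lo}}g$, $P_{\mathrm{lo}}F$, no dispersion is needed, since everything is spectrally supported in $[0,2]$. Using $|\sin(t\lambda)/\lambda| \le |t| \le T$ and $|\cos(t\lambda)|\le 1$ in the Duhamel formula, a crude energy estimate gives $\|v(t)\|_{L^2} + \|\partial_t v(t)\|_{L^2} \lesssim_T \|P_{\mathrm{lo}}f\|_{L^2} + \|P_{\mathrm{lo}}g\|_{L^2} + \int_0^T\|P_{\mathrm{lo}}F(s)\|_{L^2}\,ds$ for $t\in[0,T]$. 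On the window $[0,2]$ one has $\|P_{\mathrm{lo}}f\|_{L^2} \lesssim \|f\|_{H^\gamma}$, $\|P_{\mathrm{lo}}g\|_{L^2} \lesssim \|g\|_{H^{\gamma-1}}$ (the weight $(1+\lambda^2)^{\gamma-1}$ is bounded below there), and $\|P_{\mathrm{lo}}F(s)\|_{L^2} \lesssim \|F(s)\|_{L^{\tilde q'}}$ by Bernstein, as $\tilde q' \le 2$, together with $L^{\tilde q'}$-boundedness of $P_{\mathrm{lo}}$; H\"older in $s$ then bounds $\int_0^T\|F(s)\|_{L^{\tilde q'}}\,ds$ by $T^{1/\tilde p}\|F\|_{L^{\tilde p'}([-T,T];L^{\tilde q'})}$. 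Since $v(t)$ is spectrally supported in $[0,2]$, Bernstein gives $\|v(t)\|_{L^q} \lesssim \|v(t)\|_{L^2}$, $\|v(t)\|_{H^\gamma}\lesssim\|v(t)\|_{L^2}$, and $\|\partial_t v(t)\|_{H^{\gamma-1}}\lesssim\|\partial_t v(t)\|_{L^2}$, so $\|v\|_{L^p([-T,T];L^q)} \le T^{1/p}\|v\|_{L^\infty_t L^q}$ and $\|(v,\partial_t v)\|_{L^\infty([-T,T];H^\gamma\times H^{\gamma-1})}$ are both bounded by the right-hand side of \eqref{eqn:localstrich}. Adding the high- and low-frequency contributions yields \eqref{eqn:localstrich}.

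The main obstacle is precisely the low-frequency regime: because $\gamma - 1 < 0$, the negative-order homogeneous norm $\|g\|_{\dot H^{\gamma-1}}$ — and likewise $\|F\|_{L^{\tilde p'}(\RR;\dot H^{\gamma-1})}$ — is \emph{not} dominated by the inhomogeneous norm $\|g\|_{H^{\gamma-1}}$, so Theorem~\ref{thm:strich} and Corollary~\ref{thm:inhomogcorr} cannot be invoked directly there; the resolution is that on a finite time interval, after spectral truncation, the wave evolution needs no decay and the estimate collapses to the crude $L^2$-energy bounds above. A secondary technical point is the required $L^r$-boundedness of the Littlewood--Paley cutoffs and the Bernstein inequalities on $\cone$, both of which rest on Gaussian heat-kernel bounds for the flat cone.
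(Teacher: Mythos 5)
Your proposal is correct and follows essentially the same route as the paper: the paper's proof (given as a remark in Section~\ref{sec:reduction}) likewise splits into low and high frequencies, taking $\tilde{u}_0 = \sum_{k\leq 0} u_k$ and bounding it by energy estimates plus Sobolev embedding at low frequency, while at high frequency it replaces $\Delta_\g$ by $\Id+\Delta_\g$ in \eqref{eqn:l2hs} (i.e.\ uses the comparability of homogeneous and inhomogeneous norms there) and invokes the already-established homogeneous and inhomogeneous estimates, exactly as you do with $P_{\mathrm{lo}}$, $P_{\mathrm{hi}}$, Bernstein, and Corollary~\ref{thm:inhomogcorr}. Your write-up is simply a more detailed version of the paper's sketch (single sharp cutoff versus summed Littlewood--Paley pieces, Bernstein versus Sobolev embedding), with only a cosmetic slip in the displayed inequality $\lambda^{2\sigma}\leq(1+\lambda^2)^\sigma$, whose direction reverses for $\sigma<0$ but whose two-sided comparability for $\lambda\geq 1$ is all you use.
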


The expressions for the Schwartz kernels of $\calU(t)$ derived by Cheeger and
Taylor vary depending on into which of three regions of the spacetime $\bbR
\times C(\bbS^1_\rho) \times C(\bbS^1_\rho)$ their arguments fall; these regions
are
\begin{equation}
  \begin{aligned}
    \label{eqn:2}
    \text{Region I} &\defeq \left\{(t,r_1,\theta_1,r_2,\theta_2) : 0 < t <
      d_\g\!\left((r_1,\theta_1),(r_2,\theta_2)\right) \right\} , \\
    \text{Region II} &\defeq \left\{ (t,r_1,\theta_1,r_2,\theta_2) :
      d_\g\!\left((r_1,\theta_1),(r_2,\theta_2)\right) < t < r_1 + r_2
    \right\}  , \text{ and} \\
    \text{Region III} &\defeq \Big\{ (t,r_1,\theta_1,r_2,\theta_2) : t > r_1 +
    r_2 \Big\} .
  \end{aligned}
\end{equation}
Here, the Riemannian distance function $d_\g$ is (see~\cite[(3.41)]{CT82})
\begin{equation}
  d_\g\!\left((r_1,\theta_1),(r_2,\theta_2)\right) = \begin{cases}
    \left(r_1^2+r_2^2 -2r_1r_2\cos(\theta_1-\theta_2)\right)^{1/2}, &
    |\theta_1-\theta_2|\leq \pi \\
    r_1+r_2, & |\theta_1-\theta_2|\geq \pi,
  \end{cases}
\end{equation}
provided the angular coordinates $\theta_1$ and $\theta_2$ are chosen so that
$|\theta_1-\theta_2|$ gives the distance between the two points on the circle.
See Figure~\ref{fig:regions} for a heuristic sketch of these regions in $\bbR^2$
at a point in time when Region III is nonempty.  Region I is the part of
spacetime in which the propagator is identically zero owing to finite speed of
the propagation of supports.  Region II is the regime in which waves propagate
as they would on a smooth manifold, i.e.\ the region in which there has been no
interaction between the main front and the cone tip.  Finally, Region III is the
region in which there has been an interaction between the main front and the
cone point.  The singularities, i.e.~ wavefront set, of the propagators in the
transition between Regions I and II are entirely ``geometric", to use the
terminology of Melrose and Wunsch~\cite{melrosewunsch}, which is to say that
they propagate via geodesic flow.  Those in the transition between Regions II
and III can be either geometric or ``diffractive".  Heuristically speaking, the
geometric singularities in this transition are the limits of the geometric
singularities in the transition between Regions I and II, and the diffractive
singularities are those emerging radially from the cone point after a
singularity has entered.

\begin{figure}
  \centering
  \begin{tikzpicture}
    \node at (0,0) {$\bullet$}; \draw (-2,0) circle (3cm); \draw (0,0) circle
    (1cm); \draw (-4,0) node {II}; \draw (-6,0) node {I}; \draw (.5,0) node
    {III}; \draw (-2cm-3pt,-3pt) -- (-2cm+3pt,3pt); \draw (-2cm-3pt,3pt) --
    (-2cm+3pt,-3pt);
  \end{tikzpicture}
  \label{fig:regions}
  \caption{The Regions of behavior; ``$\times$'' denotes the initial pole of the
    source and ``$\bullet$'' the cone point}
\end{figure}
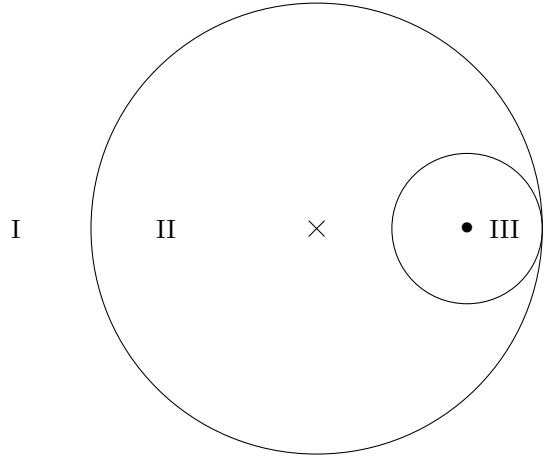

We now discuss the formulae for the Schwartz kernel of $\calU(t)$ in the
different regions of spacetime.  As we remarked, in Region I
\begin{equation}
  \label{eqn:3}
  K_{\calU(t)}^\text{I}(r_1,\theta_1;r_2,\theta_2) \equiv 0 .
\end{equation}
In Region II, it is given by
\begin{multline}
  \label{eqn:4}
  K_{\calU(t)}^\text{II}(r_1,\theta_1;r_2,\theta_2) \\
  \mbox{} = \frac{1}{2\pi} \sum_{j} \left[ t^2 - r_1^2 - r_2^2 + 2r_1 r_2
    \cos\left( \theta_1 - \theta_2 - j \cdot 2\pi\rho \right)
  \right]^{-\frac{1}{2}} ,
\end{multline}
where the index $j$ ranges over integers such that
\begin{equation}
  \label{eqn:6}
  0 < |\theta_1 - \theta_2 - j \cdot 2\pi\rho| < \cos^{-1}\left( \frac{r_1^2 +
      r_2^2 - t^2}{2r_1r_2} \right).
\end{equation}
In Region III, it is given by
\begin{multline}
  \label{eqn:5}
  K_{\calU(t)}^\text{III}(r_1,\theta_1;r_2,\theta_2) \\
  \mbox{} = \frac{1}{2\pi} \sum_{j} \left[ t^2 - r_1^2 - r_2^2 + 2r_1 r_2
    \cos\left( \theta_1 - \theta_2 - j \cdot 2\pi\rho \right)
  \right]^{-\frac{1}{2}} \hspace*{2cm} \\
  \mbox{} - \frac{1}{4\pi^2 \rho} \int_{0}^{\cosh^{-1}\left( \frac{t^2 - r_1^2 -
        r_2^2}{2r_1 r_2} \right)} \left[t^2 -
    r_1^2 - r_2^2 - 2 r_1 r_2 \cosh(s) \right]^{-\frac{1}{2}} \\
  \mbox{} \times \left\{ \frac{ \sin\!\left[ \frac{\pi + \theta_1 -
          \theta_2}{\rho} \right]}{\cosh\!\left[ \frac{s}{\rho} \right] -
      \cos\!\left[ \frac{\theta_1 - \theta_2 + \pi}{\rho} \right]} + \frac{
      \sin\!\left[ \frac{\pi - (\theta_1 - \theta_2)}{\rho}
      \right]}{\cosh\!\left[ \frac{s}{\rho} \right] - \cos\!\left[
        \frac{\theta_1 - \theta_2 - \pi}{\rho} \right]}\right\} ds ,
\end{multline}
where $j$ now ranges over
\begin{equation}
  \label{eqn:7}
  0 < |\theta_1 - \theta_2 - j \cdot 2\pi\rho| < \pi .
\end{equation}

The remainder of the paper is organized as follows.  In Section
\ref{sec:Spaces}, we review Cheeger's functional calculus for metric cones and
define the corresponding homogeneous Sobolev spaces.  Section \ref{sec:ests}
addresses the Strichartz estimates in Theorem \ref{thm:strich}.  In Section
\ref{sec:apps}, we conclude by exploring applications and extensions of these
estimates. Specifically, we discuss Morawetz estimates on the Euclidean cone as
well as Strichartz estimates on wedges, polygons, and Euclidean surfaces with
conic singularities.  We then apply these estimates to the well-posedness of
nonlinear wave equations with initial data of minimal regularity.

\subsection*{Acknowledgements}
It is a pleasure to thank Fabrice Planchon for pointing out the relevance of the
Hilbert transform in establishing Strichartz estimates for the cosine propagator
and the observation that the Morawetz estimate holds immediately from the
existing analysis on the wave equation with the inverse square potential.
Additionally, the foundation of the present work lies in the authors' previous
paper with Sebastian Herr, and we thank him for his insights in this area.

MDB was supported in part by NSF Grants DMS-0801211 and DMS-1001529.  GAF was
partially supported by NSF grant DMS-0636646 and by a Presidential Fellowship at
Northwestern University.  JLM was supported in part by an NSF Postdoctoral
Fellowship in the Department of Applied Physics and Applied Mathematics (APAM)
at Columbia University.  JLM also wishes to acknowledge the hospitality of the
Courant Institute of Mathematical Sciences, where he was a visiting academic
during part of the preparation of this manuscript.

\section{Spectral Theory and Function Spaces}
\label{sec:Spaces}

We begin by briefly recalling Cheeger's functional calculus for metric cones
$C(Y)$.  Using this, we define the homogeneous Sobolev spaces $\dot{H}^s\!\left(
  C(Y) \right)$ appearing in the Strichartz estimates \eqref{eqn:strich}.  We
refer to Cheeger's articles with Taylor \cite{CT82,CT82no2} or the second book
of Taylor's series \cite{Tay2} for an in-depth discussion of the functional
calculus as well as other applications.

\subsection{Cheeger's functional calculus}\label{sec:cheeg-funct-calc}
Let $Y^n$ be a compact, boundaryless Riemannian manifold with metric $\h$, and
let $C(Y) \defeq \bbR_+ \times Y$ be the half-cylinder over $Y$.  To make $C(Y)$
into a metric cone, we equip it with the incomplete Riemannian metric
\begin{equation}
  \label{eq:product-cone-metric}
  \g(r,y) = \upd r^2 + r^2 \, \h(y) .
\end{equation}
The nonnegative Laplacian on $C(Y)$ is thus
\begin{equation}
  \label{eq:11}
  \Delta_\g = - \del_r^2 - \frac{n}{r} \, \del_r + \frac{1}{r^2} \, \Delta_\h ,
\end{equation}
where $\Delta_\h$ is the nonnegative Laplacian on the cross-section $Y$.
Writing $\left\{ \mu_j \right\}_{j=0}^\infty$ for the eigenvalues of $\Delta_\h$
with multiplicity and $\left\{ \varphi_j : Y \To \bbC\right\}_{j=0}^\infty$ for
the corresponding orthonormal basis of eigenfunctions, we define the rescaled
eigenvalues $\nu_j$ by
\begin{equation}
  \label{eq:12}
  \nu_j \defeq \left( \mu_j + \frac{(n-1)^2}{4} \right)^\frac{1}{2} .
\end{equation}
Note that $\mu_0 = 0$ and $\nu_0 = \frac{n-1}{2}$ in our convention.

Henceforth, we take $\Delta_\g$ to be the Friedrichs extension of the above
Laplace-Beltrami operator on functions.  As is well-known, suitable functions $G
: \RR \To \bbC$ give rise to operators $G(\Delta_\g)$ via the spectral theorem.
By taking advantage of the product structure of the metric $\g(r,y)$ and
separation of variables, Cheeger showed that the Schwartz kernel of
$G(\Delta_\g)$, which we will write as $K_{G(\Delta_\g)}$, has the form
\begin{equation}
  \label{eq:13}
  K_{G(\Delta_\g)}(r_1,y_1;r_2,y_2) = (r_1r_2)^{-\frac{n-1}{2}} \sum_{j=0}^\infty
  \tilde{K}_{G(\Delta_\g)}(r_1,r_2,\nu_j) \, \varphi_j(y_1) \,
  \overline{\varphi_j(y_2)} ,
\end{equation}
where the radial coefficient $\tilde{K}_{G(\Delta_\g)}(r_1,r_2,\nu_j)$ is given
by
\begin{equation}\label{eq:14}
  \tilde{K}_{G(\Delta_\g)}(r_1,r_2,\nu) \defeq \int_{0}^\infty
  G(\lambda^2) \, J_\nu(\lambda r_1) \, J_\nu(\lambda r_2) \; \lambda \,
  d\lambda .
\end{equation}
Here, $J_\nu(z)$ is the Bessel function of order $\nu$,
\begin{equation*}
  \label{eq:15}
  J_\nu(z) \defeq \sum_{j=0}^\infty \frac{(-1)^j}{j! \, \Gamma(\nu + j + 1)}
  \left(\frac{z}{2}\right)^{\nu + 2j}.
\end{equation*}

We can view the formulae \eqref{eq:13} and \eqref{eq:14} as consequences of a
``factoring'' of the spectral measure on $C(Y)$ into tangential components and
radial components.  Indeed, the product $J_{\nu_j}(\l r)\varphi_j(y)$ is a
solution to the Helmholtz equation
\begin{equation*}
  \left(\Delta_\g - \l^2\right) \left(J_{\nu_j}(\l r)\varphi_j(y) \right) = 0.
\end{equation*}
This naturally leads one to consider the Hankel transform of order $\nu_j$,
\begin{equation}
  \label{eq:5}
  \mathcal{H}_{\nu_j}\!\left[b(r)\right]\!(\lambda) \defeq \int_{0}^\infty b(r)
  \, J_{\nu_j}(\lambda r) \; r \, dr ,
\end{equation}
a unitary map $L^2(\RR_+, r\,dr) \To L^2(\RR_+, \l\,d\l)$ satisfying
$\mathcal{H}_{\nu_j}\circ \mathcal{H}_{\nu_j}=\Id$ (see~\cite[Ch.~ 8,
Proposition 8.1]{Tay2}).  This gives rise to a unitary isomorphism
\begin{equation}
  \label{eq:1}
  \bm{\mathcal{H}} : L^2\!\left(C(Y)\right) \stackrel{\cong}{\To}
  L^2\!\left(\bbR_+,\lambda \, d\lambda ; \ell^2\!\left(\bbZ_{\geq 0}\right)
  \right)
\end{equation}
which acts on functions $a(r,y)$ in $L^2(C(Y))$ by
\begin{equation}
  \label{eq:3}
  \bm{\mathcal{H}}\!\left[a\right]\!(\lambda) \defeq \left(
    \left(\mathcal{H}_{\nu_j} \circ \Pi_{j} \right)\! \left[ a(\cdot,\cdot)
    \right]\!(\lambda) \right)_{j \in \bbZ},
\end{equation}
where $\Pi_{j}$ is the projection onto the $j$-th eigenfunction of the
tangential Laplacian $\Delta_\h$,
\begin{equation}
  \label{eq:4}
  \Pi_{j}\!\left[a(\cdot,\cdot)\right]\!(r) \defeq \int_Y a(r,y) \,
  \overline{\varphi_j(y)} \; d\h .
\end{equation}
For further details, we refer the reader to Cheeger and Taylor \cite{CT82}.

\subsection{The Sobolev spaces}
\label{sec:sobol-spac}

We now define the function spaces needed for our analysis.
\begin{definition}
  For $s \geq 0$, we define the \emph{homogeneous Sobolev spaces}
  $\dot{H}^s\!\left( C(Y) \right)$ to be the completion of
  $\mathcal{C}^\infty_c(C(Y))$ in the topology induced by the inner product
  \begin{equation}
    \label{eq:18}
    \left< u,v \right>_{\dot{H}^s} \defeq \left< \Delta_\g^\frac{s}{2} u,
      \Delta_\g^\frac{s}{2} v \right>_{L^2} .
  \end{equation}
  We define the homogeneous Sobolev spaces of negative order by duality, i.e.~
  for $s > 0$,
  \begin{equation}
    \label{eq:19}
    \dot{H}^{-s}\!\left( C(Y) \right) \defeq \left( \dot{H}^s\!\left(C(Y)\right)
    \right)',
  \end{equation}
  equipped with the dual norm.
\end{definition}

When $s>0$, functions in $\Dom\!\left(\Delta_\g^{-\frac{s}{2}}\right)$ define
elements in $\dot{H}^{-s}(C(Y))$ via the usual $L^2$-pairing.  Indeed, if $\psi
\in \Dom\!\left(\Delta_\g^{-\frac{s}{2}}\right)$ and $f \in \dot{H}^{s}(C(Y))
\cap \Dom\!\left(\Delta_\g^{\frac{s}{2}}\right)$, then the functional calculus
shows that $\Delta_\g^{-\frac{s}{2}}\psi \in
\Dom\!\left(\Delta_\g^{\frac{s}{2}}\right)$; hence
\begin{equation*}
  \left| \left\langle f, \psi \right\rangle_{L^2} \right| = \left| \left\langle
      \Delta_\g^{\frac{s}{2}} f, \Delta_\g^{-\frac{s}{2}}\psi
    \right\rangle_{L^2} \right|
  \leq \left\|f\right\|_{\dot{H}^{s}} 
  \left\|\Delta_\g^{-\frac{s}{2}}\psi\right\|_{L^2}.
\end{equation*}
This also shows that if $\Psi \in \dot{H}^{-s}(C(Y))$ is defined by $\Psi(f) =
\left\langle f, \psi \right\rangle$, then we can take $f =
\Delta_\g^{-\frac{s}{2}} \psi$ to see that $\left\|\Psi\right\|_{\dot{H}^{-s}}=
\left\| \Delta_\g^{-\frac{s}{2}} \psi \right\|_{L^2}$.  Moreover,
$\Dom\!\left(\Delta_\g^{-\frac{s}{2}}\right)$ is dense in $\dot{H}^{-s}(C(Y))$.
Indeed, given any $\Psi \in \dot{H}^{-s}(C(Y))$, the Riesz representation
theorem shows that there exists $\phi \in \dot{H}^{s}(C(Y))$ such that
$\Psi(f)=\langle f, \phi \rangle_{\dot{H}^{s}}$ and $\|\Psi\|_{\dot{H}^{-s}}=
\|\phi\|_{\dot{H}^s}$.  Since $\mathcal{C}^\infty_c(C(Y))$ is dense in
$\Dom\!\left(\Delta_\g^{\frac{s}{2}}\right)$, there exists a sequence
$\{\phi_n\}_{n=1}^\infty \subseteq \mathcal{C}^\infty_c(C(Y))$ such that
$\|\phi_n - \phi\|_{\dot{H}^s} \To 0$.  Thus $\psi_n \defeq \Delta_\g^{s}
\phi_n$ is well defined and provides a sequence of functions in
$\Dom\!\left(\Delta_\g^{-\frac{s}{2}}\right)$ which approximates $\Psi$ in
$\dot{H}^{-s}(C(Y))$.

As a result of this density property, we may take $Y$ to be the 1-dimensional
manifold $\mathbb{S}^1_\rho$ to see that it suffices to show
Theorem~\ref{thm:strich} under the stronger assumption
\begin{equation}\label{eqn:initialdata}
  (f,g) \in \left[ \dot{H}^\gamma\!\left(\cone\right) \cap
    \Dom\!\left(\Delta_\g^{\frac{\gamma}{2}}\right) \right]
  \times
  \left[ \dot{H}^{\gamma-1}\!\left(\cone\right) \cap
    \Dom\!\left(\Delta_\g^{\frac{\gamma-1}2}\right) \right] . 
\end{equation}

We define the inhomogeneous Sobolev spaces similarly.
\begin{definition}
  For all real $s$, we define the \emph{inhomogeneous Sobolev spaces}, denoted
  $H^s\!\left(C(Y)\right)$, to be the closure of $ \mathcal{C}^\infty_c\!\left(
    C(Y) \right)$ in the topology induced by the inner product
  \begin{equation}
    \label{eq:top}
    \left< u,v \right>_{H^s} \defeq \left< \left(\Id +
        \Delta_\g\right)^\frac{s}{2} u , \left(\Id +
        \Delta_\g\right)^\frac{s}{2} v \right>_{L^2} .
  \end{equation}
\end{definition}

\section{Dispersive Estimates}
\label{sec:ests}
In this section, we prove Theorem~\ref{thm:strich} using an explicit formula for
the kernel of the sine propagator $\calU(t)$.  As discussed in the introduction,
we will take a Littlewood-Paley decomposition of the solution to establish
frequency-localized dispersive estimates.  These will allow for a regularization
of $K_{\calU(t)}$ to unit frequency that will overcome the complications coming
from unboundedness of the fundamental solution.

\subsection{Reduction to dispersive estimates}
\label{sec:reduction}
Let $\left\{ \beta_k \right\}_{k \in \mathbb{Z}}$ be a collection of smooth
cutoffs satisfying
\begin{equation}\label{eqn:L-Pcutoffs}
  \sum_{k=-\infty}^\infty \beta_k(\zeta) \equiv 1, \quad \beta_k(\zeta) \defeq
  \beta_0\!\left(2^{-k}\zeta\right) \! , \quad \text{and} \quad
  \supp\!\left(\beta_0\right) 
  \subset \left( \frac{1}{\sqrt{2}},2\sqrt{2} \right) .
\end{equation}
We define the associated Littlewood-Paley frequency cutoffs
$\beta_k\!\left(\sqrt{\smash[b]{\Delta_\g}}\right)$ using the functional
calculus reviewed in Section \ref{sec:Spaces}, and we see immediately from the
definition that
\begin{equation*}
  \sum_{k=-\infty}^\infty \beta_k\!\left(\sqrt{\smash[b]{\Delta_\g}}\right) =
  \Id : L^2\!\left(\cone\right) \To L^2\!\left(\cone\right).
\end{equation*}
More generally, these Littlewood-Paley cutoffs satisfy a range of squarefunction
estimates, which we summarize in the following proposition.
\begin{proposition}
  \label{lem:sqfnests}
  Let $1 < q < \infty$.  For elements $a \in L^q\!\left(\cone\right)$, we have
  \begin{equation}\label{eqn:L-P estimate}
    \left\| \left(\sum_{k =-\infty}^\infty
        \left|\beta_k\!\left(\sqrt{\smash[b]{\Delta_\g}}
          \right) a \right|^2\right)^{\frac{1}{2}} \right\|_{L^q(\cone)} \approx
    \left\|a\right\|_{L^q(\cone)}  ,
  \end{equation}
  with implicit constants depending only on $q$.
\end{proposition}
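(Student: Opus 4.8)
The plan is to reduce the claimed equivalence to the classical Littlewood--Paley theory on $\reals^2$ (or rather a general metric-measure version of it) by verifying that $\Delta_\g$ on $\cone$ satisfies the hypotheses under which square function estimates are known to hold. Concretely, I would appeal to the general machinery for self-adjoint operators satisfying Gaussian heat kernel bounds and a doubling property: if $(X,d,\mu)$ is a doubling metric measure space and $L$ is a nonnegative self-adjoint operator on $L^2(X)$ whose heat semigroup $e^{-tL}$ has kernel obeying the standard Gaussian upper bound $|p_t(x,y)| \lesssim \mu(B(x,\sqrt t))^{-1} \exp(-d(x,y)^2/ct)$, then for any $\beta_0 \in \mathcal{C}^\infty_c((0,\infty))$ the square function $a \mapsto \big(\sum_k |\beta_k(\sqrt L)a|^2\big)^{1/2}$ is bounded on $L^q(X)$ for $1<q<\infty$, with the reverse inequality following by duality together with a Calder\'on reproducing identity $\sum_k \tilde\beta_k(\sqrt L)\beta_k(\sqrt L) = \Id$ for a suitable partition relative $\tilde\beta_0$. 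The cone $\cone$ with the distance $d_\g$ is Ahlfors $2$-regular, hence doubling, so everything hinges on the heat kernel bound for the Friedrichs Laplacian $\Delta_\g$.

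The key steps, in order: (1) Record that $(\cone, d_\g, dA_\g)$ is a doubling (indeed Ahlfors $2$-regular) metric measure space, so that $\mu(B(x,r)) \approx r^2$ uniformly. (2) Establish the Gaussian heat kernel upper bound for $e^{-t\Delta_\g}$. This is where I would either cite the literature directly or derive it from the Cheeger functional calculus of Section \ref{sec:Spaces}: using \eqref{eq:13}--\eqref{eq:14} with $G(\lambda^2) = e^{-t\lambda^2}$ one has an explicit Hankel-transform representation of the heat kernel, and one can also pass through the subordination formula to the wave kernel, or simply invoke the fact (due to Cheeger--Taylor, and in the conic-singularities generality due to work of Li, Mooers--Zettl, and others) that metric cones over compact manifolds enjoy two-sided Gaussian heat kernel estimates. (3) Invoke the abstract square function theorem --- e.g.\ the spectral-multiplier/Littlewood--Paley results available for operators with Gaussian bounds on doubling spaces --- to get the ``$\lesssim$'' direction of \eqref{eqn:L-P estimate}. (4) Obtain the ``$\gtrsim$'' direction by duality: pick a second family $\tilde\beta_k$ with $\tilde\beta_k \equiv 1$ on $\supp\beta_k$ (slightly fattened so that $\sum_k \tilde\beta_k \beta_k = \Id$), write $\langle a, b\rangle = \sum_k \langle \beta_k(\sqrt{\Delta_\g})a, \tilde\beta_k(\sqrt{\Delta_\g})b\rangle$, apply Cauchy--Schwarz in $k$ and then H\"older in $x$, and bound the $\tilde\beta$ square function of $b$ in $L^{q'}$ by step (3). (5) Note all constants depend only on $q$ since the heat kernel constants and the doubling constant for $\cone$ are absolute (they are even independent of $\rho$, though that is not needed here).

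The main obstacle is step (2): producing a clean, citable Gaussian heat kernel bound for the \emph{Friedrichs} extension on a space with a genuine conic singularity at $r=0$. Away from the tip the operator is just the flat Laplacian, so the only real issue is uniform control near and across the cone point, where one must ensure the Friedrichs domain condition does not spoil the Gaussian decay; for cones over $\bbS^1_\rho$ this is known, and in fact follows from the explicit wave propagator formulae \eqref{eqn:4}--\eqref{eqn:5} reproduced in the introduction together with finite propagation speed, but stating it carefully is the delicate point. Once the heat kernel bound is in hand, steps (1), (3), (4), (5) are entirely standard Calder\'on--Zygmund/Littlewood--Paley theory and require no new ideas.
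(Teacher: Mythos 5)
Your proposal is correct and follows essentially the same route as the paper: the paper likewise reduces the matter to Gaussian upper bounds for the heat kernel of the Friedrichs extension $\Delta_\g$ on the doubling space $\cone$ (obtained via Grigor'yan's on-diagonal-to-Gaussian result together with the explicit conic heat kernel) and then invokes general spectral-multiplier/Littlewood--Paley machinery---specifically Alexopoulos' multiplier theorem, as carried out in Section 4 of the authors' earlier paper with Herr---with the reverse inequality handled by the standard duality argument you describe.
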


The proof of this proposition is implicit in the arguments in Section 4 of
\cite{BFHM}, the authors' previous paper with Herr, where the squarefunction
estimates are shown for functions on a Euclidean surface with conical
singularities.  Namely, \eqref{eqn:L-P estimate} follows from the spectral
multiplier theorem of Alexopolous~\cite{alexopolous}, which is valid in any
context where the heat kernel satisfies Gaussian upper bounds. A result of
Grigor'yan~\cite{grigoryan} shows that such bounds are true provided they are
satisfied along the diagonal. On the cone $\cone$, these on-diagonal bounds
follow from an explicit formula for the heat kernel.  See Section 4 of
\cite{BFHM} for a more thorough discussion.

Returning to the wave equation, suppose $u$ is a solution to the IVP
\eqref{eqn:wave} with initial data satisfying~\eqref{eqn:initialdata}.  We
define its frequency decomposition to be
\begin{equation}
  \label{eq:freq-decomp}
  u_k(t,\cdot) \defeq
  \beta_k\!\left(\sqrt{\smash[b]{\Delta_\g}}\right)u(t,\cdot) \quad 
  \text{and} \quad \left(f_k,g_k\right) \defeq
  \left(\beta_k\!\left(\sqrt{\smash[b]{\Delta_\g}}\right)f,
    \beta_k\!\left(\sqrt{\smash[b]{\Delta_\g}}\right)g\right) .
\end{equation}
Since the frequency cutoffs commute with the Laplacian, the frequency localized
solutions $\left\{u_k\right\}_{k\in\bbZ}$ satisfy the collection of IVPs
\begin{equation}
  \label{eq:freq-local-IVPs}
  \left\{
    \begin{aligned}
      \left(D_t^2 - \Delta_\g \right) u_k(t,r,\theta) &= 0 \\
      u_k(0,r,\theta) &= f_k(r,\theta) \\
      \prtl_t u_k(0,r,\theta) &= g_k(r,\theta) . \\
    \end{aligned}
  \right.
\end{equation}
As a consequence of the squarefunction estimates \eqref{eqn:L-P estimate} and
Minkowski's inequality we have
\begin{equation*}
  \|u\|_{L^p(\RR;L^q(\cone))} \lesssim \left(\sum_{k=-\infty}^\infty
    \left\|u_k\right\|_{L^p(\RR;L^q(\cone))}^2 \right)^{\frac{1}{2}}.
\end{equation*}
Furthermore, we note that the operator norm of
\begin{equation*}
  2^{-ks} \, \Delta_\g^{\frac{s}{2}} \, \beta_k\!\left(\sqrt{\smash[b]{\Delta_\g}}\right) 
\end{equation*}
on $L^2\!\left(\cone\right)$ is uniformly bounded in $k$, which implies that
\begin{equation}\label{eqn:l2hs}
  2^{k\gamma} \left\|f_k\right\|_{L^2} + 2^{k(\gamma-1)}
  \left\|g_k\right\|_{L^2} \lesssim \left\|\Delta_\g^{\frac{\gamma}{2}}f_k
  \right\|_{L^2} + \left\|\Delta_\g^{\frac{\gamma-1}{2}}g_k \right\|_{L^2}.
\end{equation}
Therefore, if we show the collection of frequency-localized Strichartz estimates
\begin{equation}\label{eqn:freqloc}
  \left\|u_k\right\|_{L^p(\RR;L^q(\cone))} \lesssim 2^{k\gamma}
  \left\|f_k\right\|_{L^2(\cone)} + 2^{k(\gamma-1)}
  \left\|g_k\right\|_{L^2(\cone)},
\end{equation}
then the desired Strichartz estimates~\eqref{eqn:strich} will follow
from~\eqref{eqn:l2hs} and the bound
\begin{multline}\label{eqn:orthog}
  \sum_{k=-\infty}^\infty \left( \left\|\Delta_\g^{\frac{\gamma}{2}}f_k
    \right\|^2_{L^2} + \left\|\Delta_\g^{\frac{\gamma-1}{2}}g_k \right\|^2_{L^2}
  \right) \\
  = \sum_{k=-\infty}^\infty \left(
    \left\|\beta_k\!\left(\sqrt{\smash[b]{\Delta_\g}}\right)
      \Delta_\g^{\frac{\gamma}{2}}f \right\|_{L^2}^2 + \left\|
      \beta_k\!\left(\sqrt{\smash[b]{\Delta_\g}}\right)
      \Delta_\g^{\frac{\gamma-1}{2}}g \right\|_{L^2}^2\right) \\
  \lesssim \left\|f\right\|_{\dot{H}^{\gamma}}^2 +
  \left\|g\right\|_{\dot{H}^{\gamma-1}}^2 .
\end{multline}

\begin{remark}
  We also note at this stage that the homogeneous estimates in
  Corollary~\ref{thm:localcorr} follow by a slight modification of these
  arguments.  A corresponding adjustment in the proof of Corollary
  \ref{thm:inhomogcorr} will handle the inhomogeneous inequality.  For local
  estimates, we instead take $\tilde{u}_0 \defeq \sum_{k\leq 0} u_k$, and we
  observe by Sobolev embedding and the fact that $\tilde{u}_0$ is supported at
  low frequency that
  \begin{equation*}
    \begin{aligned}
      \left\|\tilde{u}_0\right\|_{L^p([-T,T];L^q(\cone))} &\lesssim \sup_{-T\leq
        t\leq T} \left\|\tilde{u}_0(t)\right\|_{H^{\gamma+\frac 12}(\cone)}\\
      &\lesssim \left\|f\right\|_{H^{\gamma}(\cone)} +
      \left\|g\right\|_{H^{\gamma-1}(\cone)}.
    \end{aligned}
  \end{equation*}
  Now, note that we can replace $\Delta_\g$ by $\Id+\Delta_\g$
  in~\eqref{eqn:l2hs} when $k \geq 1$.  Thus if~\eqref{eqn:freqloc} holds, we
  can apply reasoning similar to the above to prove Corollary
  \ref{thm:localcorr}.
\end{remark}

We now reduce the collection of frequency-localized Strichartz estimates
\eqref{eqn:freqloc} to a single Strichartz estimate for initial data with
frequency localized to unit scale.
\begin{lemma}
  Suppose $f, g \in L^2\!\left(\cone\right)$ are functions satisfying
  \begin{equation*}
    \beta\!\left(\sqrt{\smash[b]{\Delta_\g}}\right) f = f \quad \text{and} \quad
    \beta\!\left(\sqrt{\smash[b]{\Delta_\g}}\right) g = g
  \end{equation*}
  for some smooth cutoff $\beta \in \mathcal{C}^\infty_c(\bbR)$ supported in
  $\left(\frac{1}{4},4\right)$.  Suppose also that the corresponding solution to
  the wave equation IVP \eqref{eqn:wave} satisfies the Strichartz estimate
  \begin{equation}\label{eqn:freqlocunit}
    \|u\|_{L^p(\RR;L^q(\cone))} \lesssim \left\|f\right\|_{L^2(\cone)} +
    \left\|g\right\|_{L^2(\cone)}.
  \end{equation}
  Then the estimates~\eqref{eqn:freqloc} hold for all integers $k$.
\end{lemma}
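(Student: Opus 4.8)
The plan is to exploit the exact scaling symmetry of the cone $\cone$. For $\lambda>0$ let $D_\lambda$ denote the spatial dilation $(D_\lambda h)(r,\theta)\defeq h(\lambda r,\theta)$. From the formula \eqref{eq:11} for $\Delta_\g$ with $n=1$ one reads off that $\Delta_\g$ is homogeneous of degree $-2$, i.e.\ $\Delta_\g D_\lambda=\lambda^2 D_\lambda\Delta_\g$; hence $\sqrt{\Delta_\g}\,D_\lambda=\lambda D_\lambda\sqrt{\Delta_\g}$ and, by the functional calculus, $G(\sqrt{\Delta_\g})\,D_\lambda=D_\lambda\,G(\lambda\sqrt{\Delta_\g})$ for suitable $G$. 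Taking $G$ to be $\cos$, $\sin(\cdot)/(\cdot)$, and $\beta_0$ in turn would give the intertwining identities $\calUdot(t)D_\lambda=D_\lambda\calUdot(\lambda t)$, $\calU(t)D_\lambda=\lambda^{-1}D_\lambda\calU(\lambda t)$ for the propagators \eqref{eqn:abbreviate}, and $\beta_0(\sqrt{\Delta_\g})D_\lambda=D_\lambda\beta_0(\lambda\sqrt{\Delta_\g})$ for the frequency cutoffs. I will also use the elementary identity $\|D_\lambda h\|_{L^q(\cone)}=\lambda^{-2/q}\|h\|_{L^q(\cone)}$ for $1\le q\le\infty$ (with $2/q=0$ when $q=\infty$), obtained by the change of variables $r\mapsto\lambda r$ in the volume element $r\,dr\,d\theta$.

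The core of the argument is, for each fixed $k\in\bbZ$, to rescale the frequency-$2^k$ data down to unit frequency by choosing $\lambda=2^{-k}$. I would set
\[
  \tilde f_k\defeq D_{2^{-k}}f_k,\qquad \tilde g_k\defeq 2^{-k}D_{2^{-k}}g_k,\qquad \tilde u_k(t,\cdot)\defeq D_{2^{-k}}\bigl(u_k(2^{-k}t,\cdot)\bigr).
\]
Using the intertwining identities one obtains $\tilde u_k(t)=\calUdot(t)\tilde f_k+\calU(t)\tilde g_k$, together with $\tilde u_k(0)=\tilde f_k$ and $\prtl_t\tilde u_k(0)=\tilde g_k$, so that $\tilde u_k$ is the solution of the wave IVP \eqref{eqn:wave} with data $(\tilde f_k,\tilde g_k)$. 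Since $f_k=\beta_0(2^{-k}\sqrt{\Delta_\g})f$ (by \eqref{eqn:L-Pcutoffs} and \eqref{eq:freq-decomp}) and $D_{2^{-k}}\beta_0(2^{-k}\sqrt{\Delta_\g})=\beta_0(\sqrt{\Delta_\g})D_{2^{-k}}$, we have $\tilde f_k=\beta_0(\sqrt{\Delta_\g})(D_{2^{-k}}f)$ and, similarly, $\tilde g_k=\beta_0(\sqrt{\Delta_\g})(2^{-k}D_{2^{-k}}g)$; thus both are spectrally supported in $\supp(\beta_0)\subset(\tfrac{1}{\sqrt2},2\sqrt2)$. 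Fixing once and for all a cutoff $\beta\in\mathcal{C}^\infty_c(\bbR)$ with $\supp(\beta)\subset(\tfrac14,4)$ and $\beta\equiv1$ on $[\tfrac{1}{\sqrt2},2\sqrt2]$, we get $\beta(\sqrt{\Delta_\g})\tilde f_k=\tilde f_k$ and $\beta(\sqrt{\Delta_\g})\tilde g_k=\tilde g_k$, so the hypothesis \eqref{eqn:freqlocunit} applies to $\tilde u_k$.

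It then remains to unwind the scaling. Combining $\|D_\lambda h\|_{L^q(\cone)}=\lambda^{-2/q}\|h\|_{L^q(\cone)}$ at $\lambda=2^{-k}$ with the substitution $s=2^{-k}t$ in the time integral yields $\|\tilde u_k\|_{L^p(\RR;L^q(\cone))}=2^{k(2/q+1/p)}\|u_k\|_{L^p(\RR;L^q(\cone))}$, while $\|\tilde f_k\|_{L^2}=2^{k}\|f_k\|_{L^2}$ and $\|\tilde g_k\|_{L^2}=\|g_k\|_{L^2}$. Feeding these into \eqref{eqn:freqlocunit} applied to $\tilde u_k$ and dividing by $2^{k(2/q+1/p)}$ gives
\[
  \|u_k\|_{L^p(\RR;L^q(\cone))}\lesssim 2^{k(1-2/q-1/p)}\|f_k\|_{L^2(\cone)}+2^{-k(2/q+1/p)}\|g_k\|_{L^2(\cone)}.
\]
Since the scaling relation \eqref{eqn:triplescale} reads $\tfrac1p+\tfrac2q=1-\gamma$, the two exponents are precisely $k\gamma$ and $k(\gamma-1)$, which is exactly \eqref{eqn:freqloc}. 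The only point that requires care is the bookkeeping of the three separate powers of $2^k$ — one from the $L^q_x$ dilation, one from the $L^p_t$ rescaling, and one from the factor $\lambda^{-1}$ in the intertwining identity for $\calU$ — but since the dilation symmetry of the cone is exact and all of these transformations are explicit, there is no genuine analytic obstacle.
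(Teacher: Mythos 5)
Your proof is correct and is essentially the paper's argument: both exploit the exact scaling invariance $(t,r,\theta)\mapsto(\mu^{-1}t,\mu^{-1}r,\theta)$ of the wave equation on $\cone$ and reduce to checking that dilation carries the frequency-$2^k$ cutoff to a unit-frequency cutoff, after which the powers $2^{k\gamma}$, $2^{k(\gamma-1)}$ fall out of \eqref{eqn:triplescale}. The only difference is cosmetic: where you assert the intertwining $G(\sqrt{\smash[b]{\Delta_\g}})D_\lambda=D_\lambda G(\lambda\sqrt{\smash[b]{\Delta_\g}})$ abstractly from homogeneity of the (Friedrichs) Laplacian, the paper verifies the same fact concretely via the Hankel transform representation of the functional calculus.
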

\begin{proof}
  Recall that the wave equation is invariant under the scaling
  \begin{equation*}
    (t,r,\theta) \mapsto \left(\mu^{-1}t, \mu^{-1}r, \theta\right).
  \end{equation*}
  It thus suffices to show that given $h$ in $L^2\!\left(\cone\right)$, the
  rescaled function
  \begin{equation*}
    \left(\beta_k\!\left(\sqrt{\smash[b]{\Delta_\g}}\right)
      h\right)\!\left(2^{-k}r, \theta \right)
  \end{equation*}
  is localized to unit frequency.  Let $\varphi_j(\theta) \defeq
  \left(2\pi\rho\right)^{-\frac{1}{2}} e^{ij\theta/\rho}$ be a normalized
  eigenfunction on $\bbS^1_\rho$.  Without loss of generality, we can assume
  that $h$ takes the form $h(r,\theta)=h_j(r) \, \varphi_j(\theta)$, since any
  $h \in L^2\!\left(\cone\right)$ can be realized as a sum of such functions.
  Therefore,
  \begin{multline*}
    \left(\beta_k\!\left(\sqrt{\smash[b]{\Delta_\g}}\right)h\right)
    (2^{-k}r,\theta) \\
    = \varphi_j (\theta)\int_{0}^\infty \int_{0}^\infty \beta_0 (2^{-k} \lambda)
    \, J_{\nu_j}(2^{-k} \lambda r) \, J_{\nu_j}(\lambda s) \, h_j(s) \; s \, ds
    \, \lambda \, d\lambda .
  \end{multline*}
  We now make the change of variables $(\lambda,s) \mapsto
  \left(2^{k}\lambda,2^{-k}s \right)$ in the integral on the right, producing
  \begin{equation*}
    \varphi_j(\theta) \int_{0}^\infty \int_{0}^\infty
    \beta_0(\lambda) \, J_{\nu_j}(\lambda r) \, J_{\nu_j}(\lambda s) \,
    h_j (2^{-k}s) \; s \, ds \, \lambda \, d\lambda .
  \end{equation*}
  This is $\left( \beta_0\!\left(\sqrt{\smash[b]{\Delta_\g}}\right)
    h(2^{-k}\cdot,\cdot)\right)\!(r,\theta)$, which is manifestly of the desired
  form.
\end{proof}

Hence, tracing back through these reductions, we see that the main theorem is a
consequence of the following two estimates for initial data $(f,g)$ that is
localized to unit frequency and any smooth cutoff $\beta$ supported in
$\left(\frac{1}{4}, 4\right)$:
\begin{align}
  \left\|\beta\!\left(\sqrt{\smash[b]{\Delta_\g}}\right) \calU(t) \,
    g\right\|_{L^p(\RR;L^q(\cone))}&\lesssim
  \left\|g\right\|_{L^2(\cone)} \label{eqn:sineest} \\
  \left\|\beta\!\left(\sqrt{\smash[b]{\Delta_\g}}\right) \calUdot(t) \,
    f\right\|_{L^p(\RR;L^q(\cone))}&\lesssim \left\|\sqrt{\smash[b]{\Delta_\g}}
    \, f\right\|_{L^2(\cone)} .\label{eqn:cosineest}
\end{align}
\begin{remark}
  \label{rem:Hilbert-transform}
  The main theorem follows from \eqref{eqn:sineest} alone, for the Strichartz
  estimate \eqref{eqn:cosineest} for the cosine evolution operator is a
  consequence of that for the sine evolution operator.  To see this, let
  $\mathcal{T}$ denote the operator acting on functions $v:\RR \times \cone \To
  \RR$ by taking the Hilbert transform in the first variable, i.e.~
  \begin{equation}
    (\mathcal{T}v)(t,r,\theta) \defeq \frac 1\pi \cdot
    \text{PV}\int_{-\infty}^\infty \frac{v(s,r,\theta)}{t-s} \; ds
  \end{equation}
  where PV denotes that this is a principal value integral.  Writing
  \begin{equation*}
    \begin{aligned}
      w(t,r,\theta) &\defeq
      \left(\beta\!\left(\sqrt{\smash[b]{\Delta_\g}}\right)
        \calUdot(t) \, f\right)\!(r,\theta) \\
      v(t,r,\theta) &\defeq
      \left(\beta\!\left(\sqrt{\smash[b]{\Delta_\g}}\right) \calU(t) \,
        \sqrt{\smash[b]{\Delta_\g}} \, f \right)\!(r,\theta)
    \end{aligned}
  \end{equation*}
  and $\bm{\mathcal{H}}$ for the spectral resolution of $\Delta_\g$ as in
  Section \ref{sec:Spaces}, we have
  \begin{equation*}
    \begin{aligned}
      \left(\bm{\mathcal{H}}w\right)\!(t,\l) &= \beta(\l)\cos(t\l) \cdot
      \left(\bm{\mathcal{H}}f\right)\!(\l) \\
      &= -\frac{1}{\pi} \cdot \text{PV}\int_{-\infty}^\infty
      \frac{\sin(t\l) \beta(\l) (\bm{\mathcal{H}}f)(\l)}{t-s} \; ds \\
      &= - \mathcal{T} \left[(\bm{\mathcal{H}}v)(t,\l) \right] .
    \end{aligned}
  \end{equation*}
  Thus, $w = - \mathcal{T} v$ since the Hilbert transform commutes with
  $\bm{\mathcal{H}}$.  Our claim then follows from noting that $\mathcal{T}$ is
  bounded as an operator on $L^p\!\left(\RR;L^q\!\left(\cone\right)\right)$,
  implying the estimate
  \begin{equation*}
    \left\|\beta\!\left(\sqrt{\smash[b]{\Delta_\g}}\right) \calUdot(t) \, f
    \right\|_{L^p(\RR;L^q(\cone))} \lesssim 
    \left\|\beta\!\left(\sqrt{\smash[b]{\Delta_\g}}\right) \calU(t) \,
      \sqrt{\smash[b]{\Delta_\g}} \, f \right\|_{L^p(\RR;L^q(\cone))} .
  \end{equation*}
  We note that the bound on $\mathcal{T}$ is a consequence of the
  Calder\'on-Zygmund theory of vector-valued singular integrals; see, for
  instance, Theorem $4.6.1$ in Grafakos' book \cite{grafakos}.  With this remark
  in mind, we now prove that Corollary \ref{thm:inhomogcorr} is a consequence of
  \eqref{eqn:sineest}.
\end{remark}

\begin{proof}[Proof of Corollary \ref{thm:inhomogcorr}]
  Let
  \begin{equation*}
    \mathcal{S}_1 \defeq \frac{\beta\!\left(\sqrt{\smash[b]{\Delta_\g}}\right)
      e^{it\sqrt{\smash[b]{\Delta_\g}}}}{\sqrt{\smash[b]{\Delta_\g}}} \quad
    \text{and} \quad \mathcal{S}_2 \defeq
    \beta\!\left(\sqrt{\smash[b]{\Delta_\g}}\right)
    e^{it\sqrt{\smash[b]{\Delta_\g}}} . 
  \end{equation*}
  By Euler's formula and the remark above (changing the form of $\beta$ if
  necessary), we have that if $\beta\!\left(\sqrt{\smash[b]{\Delta_\g}}\right)f
  = f$, then
  \begin{equation*}
    \left\|\mathcal{S}_i f \right\|_{L^p(\RR;L^q(\cone))} \lesssim
    \|f\|_{L^2(\cone)}
  \end{equation*}
  for $i = 1,2$ and any admissible $(p,q)$ in \eqref{eqn:tripleadmiss},
  including $(p,q)=(\infty,2)$.  Now take $F(s,r,\theta)$ to be a smooth
  function, compactly supported in time, such that
  $\beta\!\left(\sqrt{\smash[b]{\Delta_\g}}\right) F(s,\cdot) = F(s,\cdot)$.  By
  duality, if $(\tilde{p},\tilde{q})$ is also admissible then we have
  \begin{equation*}
    \left\|\mathcal{S}_2 \mathcal{S}^*_1 F \right\|_{L^p(\RR;L^q(\cone))} \lesssim
    \|F\|_{L^{\tilde{p}'}(\RR;L^{\tilde{q}'}(\cone))}.
  \end{equation*}
  The Christ-Kiselev lemma \cite{ChKi} and time reversal now give
  \begin{equation*}
    \left\|\int_{-\infty}^t \calU(t-s)F(s)\,ds\right\|_{L^p(\RR;L^q(\cone))}
    \lesssim \|F\|_{L^{\tilde{p}'}(\RR;L^{\tilde{q}'}(\cone))},
  \end{equation*}
  once again including the case $(p,q) = (\infty,2)$.  A slight adjustment of
  these arguments implies that
  \begin{equation*}
    \left\|\int_{-\infty}^t \sqrt{\smash[b]{\Delta_\g}} \,
      \calU(t-s) \, F(s) \, ds \right\|_{L^p(\RR;L^q(\cone))} \lesssim
    \|F\|_{L^{\tilde{p}'}(\RR;L^{\tilde{q}'}(\cone))}.
  \end{equation*}
  Corollary \ref{thm:inhomogcorr} now follows by scaling considerations and
  Littlewood-Paley theory as before.
\end{proof}

It is now well-known via the standard $T T^*$ argument (see for instance Keel
and Tao \cite{keeltao98}, though the techniques originated much earlier) that if
one has a dispersive estimate of the form
\begin{equation}\label{eqn:dispersivesine}
  \left\| \beta\!\left(\sqrt{\smash[b]{\Delta_\g}}\right) \calU(t) \, g
  \right\|_{L^\infty(\cone)} \lesssim ( 1+ t)^{-\frac{1}{2}}
  \left\|g\right\|_{L^1(C(\mathbb{S}^1_\rho))}
\end{equation}
for data $g$ localized to unit frequency, then~\eqref{eqn:sineest} will follow.
The estimate~\eqref{eqn:dispersivesine} in turn follows by establishing bounds
on the supremum of the Schwartz kernel of
$\beta\!\left(\sqrt{\smash[b]{\Delta_\g}}\right) \calU(t)$.  On $\RR^2$, this is
typically accomplished by oscillatory integral methods.  On flat cones,
analogous oscillatory integrals appear to be very difficult to obtain.  Instead,
we work entirely on the spatial domain, treating the Littlewood-Paley cutoffs as
operators which regularize the corresponding kernels.  The dispersive estimates
will then follow by showing that the regularized kernel is essentially bounded
by its average over a set of unit size.

\subsection{Bounds on the ``geometric'' term}
\label{sec:geombounds}

We may unify the formulae in~\eqref{eqn:4} and~\eqref{eqn:5} to obtain a kernel
which is supported in the union of Regions II and III.  It will be written as
the sum of two terms which we (somewhat) informally describe as a ``geometric''
term and a ``diffractive'' term,
\begin{equation}\label{eqn:gplusd}
  K_{\calU(t)}(t,r_1,\theta_1;r_2,\theta_2) =
  K_{\calU(t)}^\geom(t,r_1,\theta_1;r_2,\theta_2) +
  K_{\calU(t)}^\diff(t,r_1,\theta_1;r_2,\theta_2).
\end{equation}
The expression in~\eqref{eqn:4} and the first term in~\eqref{eqn:5} can be
unified to form the geometric term
\begin{eqnarray}
  \label{eqn:Kgeom}
  K_{\calU(t)}^\geom(t,r_1,\theta_1;r_2,\theta_2) =
  \Psi(t,r_1,r_2,\theta_1-\theta_2),
\end{eqnarray}
where $\Psi$ is defined as
\begin{equation}\label{eqn:psidefn}
  \Psi(t,r_1,r_2,\theta) \defeq \sum_{-\pi \leq \theta +j\cdot 2 \pi \rho \leq
    \pi} \left[ t^2 -r_1^2 - r_2^2 + 2r_1r_2 \cos (\theta+j\cdot 2 \pi \rho)
  \right]^{-\frac{1}{2}}_+ .
\end{equation}

\begin{remark}
  The summation in \eqref{eqn:psidefn} is dependent upon the relative location
  of the points in the integral kernel on the cone.  However the total number of
  terms in the sum is no more than $1 + 1/\rho$. Also note that it vanishes for
  $|\theta| > \pi$.
\end{remark}

The diffractive term will be the remaining term in~\eqref{eqn:5}; it is
supported solely in Region III.  To simplify our expression for it, we use the
abbreviations
\begin{alignat*}{3}
  \alpha &\defeq \frac{t^2-r_1^2-r_2^2}{2r_1 r_2} = \frac{t^2-(r_1+r_2)^2}{2r_1
    r_2} + 1 &\hspace*{1.5cm}
  \beta &\defeq \cosh^{-1}(\alpha) \\
  \varphi_1 &\defeq \frac{\pi + \left(\theta_1 -\theta_2\right)}{\rho} &
  \varphi_2 &\defeq \frac{\pi - \left(\theta_1 -\theta_2\right)}{\rho}.
\end{alignat*}
We now write $K_{\calU(t)}^\diff(t, r_1, \theta_1;r_2, \theta_2)$ as
\begin{multline}\label{eqn:diffractivesine}
  K_{\calU(t)}^\diff(t, r_1, \theta_1;r_2, \theta_2) =
  -\frac{\mathbf{1}_{(0,t)}(r_1+r_2)}{4\pi^2 \rho \left(2r_1r_2\right)^{\frac
      12}} \\
  \mbox{} \times \int_0^\beta \left[ \alpha - \cosh(s) \right]^{-\frac 12}
  \left[ \frac{\sin(\varphi_1)}{\cosh\!\left(s/\rho\right) - \cos(\varphi_1)} +
    \frac{\sin(\varphi_2)}{\cosh\!\left(s/\rho \right) - \cos(\varphi_2)}
  \right] ds.
\end{multline}

Let $K_{\beta(\sqrt{\smash[b]{\Delta_\g}})}(r_3,\theta_3;r_2,\theta_2)$ denote
the kernel of $\beta\!\left(\sqrt{\smash[b]{\Delta_\g}}\right)$.  To show the
dispersive estimate~\eqref{eqn:dispersivesine}, we will establish the stronger
bounds
\begin{equation}\label{eqn:gcompose}
  \int_{C(\sph^1_\rho)} \left|
    K_{\beta(\sqrt{\smash[b]{\Delta_\g}})}(r_3,\theta_3;r_2,\theta_2) 
    K_{\calU(t)}^\geom(t,r_1,\theta_1;r_2,\theta_2)\right|
  r_2\,dr_2\,d\theta_2 \lesssim \min\!\left( t,
    t^{-\frac{1}{2}} \right),
\end{equation}
and
\begin{equation}\label{eqn:dcompose}
  \int_{C(\sph^1_\rho)} \left|
    K_{\beta(\sqrt{\smash[b]{\Delta_\g}})}(r_3,\theta_3;r_2,\theta_2)
    K_{\calU(t)}^\diff(t,r_1,\theta_1;r_2,\theta_2)\right| r_2\,dr_2\,d\theta_2
  \lesssim \min\!\left( t, 
    t^{-\frac{1}{2}} \right) .
\end{equation}

Since the heat kernel $e^{-t\Delta_\g}$ satisfies Gaussian upper bounds, as
discussed in Section \ref{sec:reduction}, we may use a theorem of
Davies~\cite[Theorem 3.4.10]{Davies} that provides bounds on the kernel of
Schwartz class functions of the Laplacian.  Namely, we obtain the following
bound on $K_{\beta(\sqrt{\smash[b]{\Delta_\g}})}(r_3,\theta_3;r_2,\theta_2)$,
where we take $N$ to be sufficiently large:
\begin{equation*}
  \left| K_{\beta(\sqrt{\smash[b]{\Delta_\g}})}(r_3,\theta_3;r_2,\theta_2)
  \right| \lesssim 
  \left(1 + d_\g^2(r_3,\theta_3;r_2,\theta_2)\right)^{-N}.
\end{equation*}

On $\bbS^1_\rho \defeq \mathbb{R}/2\pi\rho\mathbb{Z}$, we take coordinates
$\theta\in(-\pi\rho,\pi\rho]$ and assume without loss of generality that
$\theta_1 = 0$.  Let $\varepsilon>0$ be a small parameter such that $\min(\pi,
\pi\rho) \big/ \varepsilon$ is an integer. We tile the portion of the cone
$(0,\infty)_r \times (-\min(\pi, \pi\rho), \min(\pi, \pi\rho)]_\theta$ into
``polar rectangles''
\begin{equation}
  R_{k,\ell} \defeq \left(k\varepsilon,(k+1)\varepsilon\right]_r \times
  \left(\frac{\ell}{k} \varepsilon, \frac {\ell+1}k\varepsilon\right]_\theta,
\end{equation}
where $k> 0$ and
\begin{equation*}
  -\frac{\min(\pi, \pi\rho)\cdot k}{\varepsilon} \leq \ell \leq
  \frac{\min(\pi, \pi\rho) \cdot k}{\varepsilon} -1.
\end{equation*}
When $k=0$, we take
\begin{equation}
  R_{0,1} \defeq (0,\veps]_r \times \left(-\min(\pi, \pi\rho), 0\right]_\theta
  \quad \text{and} \quad R_{0,2} \defeq (0,\veps]_r \times (0, \min(\pi,
  \pi\rho)]_\theta . 
\end{equation}

Observe that the diameter of each of these rectangles is approximately
$\varepsilon$.  Indeed, if $(r,\theta)$ lies in one of these, then
\begin{equation}\label{eqn:diameter}
  \begin{aligned}
    d_\g^2\!\left(r,\theta;k\varepsilon,\frac{\ell \varepsilon}{k} \right) &=
    r^2+k^2 \varepsilon^2
    -2rk\varepsilon\cos\!\left(\theta - \frac{\ell\varepsilon}{k}\right) \\
    &=(r-k\varepsilon)^2 + \mathrm{O}\!\left(rk\varepsilon \cdot
      \frac{\varepsilon^2}{k^2}\right) \\
    &\lesssim \varepsilon^2
  \end{aligned}
\end{equation}
since $\cos(\phi) = 1 + \mathrm{O}(\phi^2)$.  Therefore, if $(r_2,\theta_2) \in
R_{k,\ell}$ we have
\begin{equation*}
  \left(1+ d_\g^2(r_3,\theta_3;r_2,\theta_2)\right)^{-N} \lesssim
  \left(1+d_\g^2\!\left(r_3,\theta_3;k\varepsilon,
      \frac{\ell\varepsilon}{k}\right)\right)^{-N}.
\end{equation*}
We may now bound the left hand side of~\eqref{eqn:gcompose} by a constant
multiple of
\begin{equation}\label{eqn:rectsum}
  \sum_{k,\ell}\left(1+d_\g^2\!\left(r_3,\theta_3;k\varepsilon,
      \frac{\ell\varepsilon}{k}\right)\right)^{-N}  \int_{R_{k,\ell}} \Psi(r_2,
  \theta_2; 
  r_1,\theta_1) \; r_2\,dr_2\, d\theta_2.
\end{equation}
Thus, if we can show that
\begin{equation}\label{eqn:rectbound}
  \int_{R_{k,\ell}} \Psi(t,r_1,\theta_1;r_2,\theta_2) \; r_2\,dr_2\, d\theta_2
  \lesssim \min\!\left(t,t^{-\frac{1}{2}} \right) ,
\end{equation}
then~\eqref{eqn:gcompose} will follow because
\begin{equation}\label{eqn:phibound}
  \sum_{k,\ell} \left(1+d^2_\g\!\left(r_3,\theta_3;k\varepsilon,
      \frac{\ell\varepsilon}{k} \right)\right)^{-N} \lesssim 1 .
\end{equation}
The remaining inequality~\eqref{eqn:rectbound} will be a consequence of the
following lemma.

\begin{lemma}\label{lem:geomrectangle}
  Let $R$ be a polar rectangle of the form
  \begin{equation*}
    R = \left\{(r,\theta) : r_0 < r \leq r_0+\varepsilon, \
      |\theta-\theta_0| \leq \frac{\varepsilon}{r_0}, \text{ and } |\theta| \leq
      \pi \right\}
  \end{equation*}
  lying either in the upper or lower half plane in $\RR^2$, where
  $\varepsilon>0$ is a sufficiently small parameter.  Then for any $r_1>0$,
  \begin{equation}\label{eqn:unitbound}
    \iint_R \left(t^2-r^2_1-r^2 +2r_1r \cos(\theta) \right)^{-\frac{1}{2}}_+
    \; r \, dr \, d\theta \lesssim \min\!\left(t,t^{-\frac{1}{2}}\right).
  \end{equation}
\end{lemma}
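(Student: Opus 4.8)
The plan is to bound the integral $\iint_R \bigl(t^2 - r_1^2 - r^2 + 2r_1 r\cos\theta\bigr)_+^{-1/2}\, r\, dr\, d\theta$ by splitting into two regimes according to the size of $t$, matching the two branches of $\min(t, t^{-1/2})$. The integrand is (two times) the free wave kernel on $\RR^2$ with a spatial pole at $(r_1, 0)$, so it is natural to rewrite the quadratic expression in terms of the Euclidean distance: with $a(r,\theta) \defeq (r^2 + r_1^2 - 2r_1 r\cos\theta)^{1/2} = d_\g((r_1,0),(r,\theta))$ on the half-plane sheet, the integrand becomes $(t^2 - a^2)_+^{-1/2} = (t-a)_+^{-1/2}(t+a)^{-1/2}$. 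The positive-part restriction confines the integration to the disk $\{a < t\}$, i.e.\ the points within Euclidean distance $t$ of the pole. First I would handle the \textbf{small-time case} $t \leq 1$ (say $t \lesssim \varepsilon$ is the interesting subcase, but $t \lesssim 1$ suffices for the claimed $\min$). Here the region $R \cap \{a < t\}$ has area $\lesssim \min(\varepsilon^2, t^2) \lesssim t^2$, and on it $r \lesssim \varepsilon + r_1$; crucially, since $R$ has diameter $\lesssim\varepsilon$ and lies at distance roughly $|r_1 - r_0|$-ish from the pole, one checks that $(t+a)^{-1/2} \lesssim t^{-1/2}$ fails in general, so instead I would integrate the singularity $(t-a)_+^{-1/2}$ directly: changing variables so that one coordinate is (comparable to) $a$, the one-dimensional integral $\int_0^t (t-a)^{-1/2}\,da \lesssim t^{1/2}$, and the transverse integration contributes another factor $\lesssim \min(\varepsilon, t) \lesssim t$ from the angular width; combined with $(t+a)^{-1/2}$ being bounded when $a$ is bounded away from $0$, or absorbed when $a$ is small, this yields a bound $\lesssim t^{1/2}\cdot t \cdot 1 \lesssim t \le \min(t,t^{-1/2})$ in the small-time regime. (One must treat separately the sub-case where the pole $(r_1,0)$ lies inside or very near $R$, where $a$ ranges down to $0$; there $(t+a)^{-1/2}$ is no worse than $(t-a)_+^{-1/2}$ and the radial integral in genuine polar coordinates centered at the pole, $\int_0^t (t^2 - a^2)_+^{-1/2} a\, da \lesssim t$, closes the estimate.)

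For the \textbf{large-time case} $t \geq 1$, I want the bound $\lesssim t^{-1/2}$. Here the factor $(t+a)^{-1/2} \lesssim t^{-1/2}$ uniformly, since $a \geq 0$, so it suffices to show $\iint_R (t-a)_+^{-1/2}\, r\, dr\, d\theta \lesssim 1$ with constant uniform in $t$ and $r_1$. Since $R$ has $r$-width $\varepsilon$ and $\theta$-width $\varepsilon/r_0$, hence area $\lesssim \varepsilon^2$ and $r \lesssim r_0 + \varepsilon$ on $R$, the measure $r\,dr\,d\theta$ restricted to $R$ has total mass $\lesssim \varepsilon^2 (1 + \varepsilon/r_0)\cdot$\ldots — more carefully, $\iint_R r\,dr\,d\theta \lesssim \varepsilon \cdot \varepsilon = \varepsilon^2$ when $r_0 \gtrsim \varepsilon$ and $\lesssim \varepsilon^2$ also when $r_0 \lesssim \varepsilon$. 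The only obstruction to $\iint_R (t-a)_+^{-1/2}\, r\, dr\, d\theta \lesssim 1$ is the integrable singularity of $(t-a)^{-1/2}$ along the circle $a = t$, but since $a \mapsto (t-a)_+^{-1/2}$ is in $L^1_{\mathrm{loc}}$ with $\int_0^\infty (t-a)_+^{-1/2}\mathbf 1_{[t-1,t]}(a)\, da \lesssim 1$, and the level sets of $a$ within $R$ have controlled width, a coarea/change-of-variables argument gives $\iint_R (t-a)_+^{-1/2}\, r\,dr\,d\theta \lesssim (\text{diam } R)\cdot \sup_{a}|\{a\text{-slice}\}| \cdot \int (t-a)_+^{-1/2}\,da|_{\text{relevant range}} \lesssim \varepsilon \cdot 1 \lesssim 1$. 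The point is that the $a$-integral never sees more than an order-one interval's worth of the singularity because $R$ has bounded diameter.

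The \textbf{main obstacle} is making the change of variables from $(r,\theta)$ to coordinates adapted to $a = d_\g((r_1,0),(r,\theta))$ uniformly in the position of $R$ relative to the pole $(r_1,0)$: when the pole is far from $R$, the map $(r,\theta)\mapsto(a, \text{transverse})$ is a bi-Lipschitz change with Jacobian comparable to $r_1$ (or to the relevant geometric quantities), but when the pole is inside or adjacent to $R$ the natural coordinates degenerate, and one should instead pass to honest polar coordinates $(a,\omega)$ centered at the pole, where $r\,dr\,d\theta$ becomes (comparable to) $a\, da\, d\omega$ and the singularity $(t^2 - a^2)_+^{-1/2}$ is tamed by the Jacobian weight $a$. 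I would therefore organize the proof as a case split on $d_\g((r_1,0), R)$ versus $\varepsilon$ — in each regime the relevant one-dimensional integral is elementary — and verify that the constant $\lesssim \min(t, t^{-1/2})$ is uniform over $r_1 > 0$, $r_0 \geq 0$, and all admissible $\theta_0$, using the smallness of $\varepsilon$ only to guarantee $\cos\phi = 1 + \mathrm O(\phi^2)$ controls the geometry of $R$ as in \eqref{eqn:diameter}.
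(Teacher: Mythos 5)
Your proposal is correct and follows essentially the same route as the paper: recenter polar coordinates at the pole $(r_1,0)$ so the integrand becomes $(t^2-\tilde r^2)_+^{-1/2}$, use the $O(\varepsilon)$ diameter of $R$ to trap it in a thin annular sector (with a separate easy case when the pole is within $O(\varepsilon)$ of $R$), and split $t\le 1$, where the Jacobian weight $\tilde r\le t$ and $\int_0^t(t^2-\tilde r^2)^{-1/2}d\tilde r\lesssim 1$ give the bound $\lesssim t$, from $t\ge 1$, where $(t+\tilde r)^{-1/2}\le t^{-1/2}$ and the sector's small radial and angular widths give $\lesssim t^{-1/2}$. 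The only blemish is the aside claiming $(t+a)^{-1/2}\lesssim t^{-1/2}$ ``fails in general'' (it holds trivially since $a\ge 0$; what fails for small $t$ is its usefulness), but this does not affect the argument you actually run.
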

We postpone the proof of the lemma and show how it yields the estimate
\eqref{eqn:rectbound}.  Observe that for any rectangle defined above, either
\begin{equation*}
  R_{k,\ell} \subset (0,\infty) \times [0,\min(\pi,\pi\rho)]
  \quad \text{or}\quad
  R_{k,\ell} \subset
  (0,\infty) \times (-\min(\pi,\pi\rho),0].
\end{equation*}
We now consider the natural identification of $(0,\infty) \times
[0,\min(\pi,\pi\rho)]$ and $(0,\infty) \times (-\min(\pi,\pi\rho),0]$ with a
convex subset of either the upper half plane or the lower half plane.  When
$j=0$ in~\eqref{eqn:psidefn}, the desired estimate on the corresponding term
follows directly from the lemma.  Otherwise, we make a change of coordinates
$\tilde{\theta} = \theta_2 + j \cdot 2\pi\rho$ and the condition $|\theta_2 + j
\cdot 2\pi\rho| \leq \pi$ means that it is sufficient to assume that
$|\tilde{\theta}|\leq \pi$ for any $(r,\tilde{\theta})$ in the translated
rectangle $R_{k,\ell}+(0,j\cdot2\pi\rho)$.  Hence the lemma yields the desired
estimate for other values of $j$.

\begin{proof}[Proof of Lemma~\ref{lem:geomrectangle}]
  It suffices to treat the case where $0 \leq \theta \leq \pi$, as a slight
  adjustment of the arguments below will handle the remaining cases.  We switch
  to polar coordinates $(\tilde{r},\tilde{\theta})$ centered at $(r_1,0)$, and
  let $d_0$ be the fixed distance
  \begin{equation*}
    d_0 = d_\g(r_1,0;r_0,\theta_0).
  \end{equation*}
  By the law of cosines, $r^2_1+r^2 -2r_1r \cos(\theta) =
  d^2_\g\!\left(r_1,0;r,\theta\right)$ is the square of the Euclidean distance
  from $(r_1,0)$ to $(r,\theta)$, and by~\eqref{eqn:diameter}, the diameter of
  $R$ is $\mathrm{O}(\varepsilon)$.  Given these observations, there exists a
  uniform constant $C>0$ and an angle $\tilde{\theta}_0$ such that $R \subset
  \widetilde{R}$ where
  \begin{equation*}
    \widetilde{R} \defeq \left\{(\tilde{r},\tilde{\theta}) : d_0-C\varepsilon
      \leq \tilde{r} 
      \leq d_0+C\varepsilon \, \text{ and }
      \left|\tilde{\theta}-\tilde{\theta}_0 
      \right| \leq \frac{C \, \varepsilon}{d_0} \right\}.
  \end{equation*}
  Since the change of coordinates is an isometry, we have the following bounds
  on the integral in~\eqref{eqn:unitbound}:
  \begin{equation}\label{eqn:tildechange}
    \iint_R \left[t^2-d^2_\g(r_1,0;r,\theta)\right]^{-\frac{1}{2}}_+ \;
    r\,dr\,d\theta \leq \iint_{\widetilde{R}}
    \left(t^2-\tilde{r}^2\right)^{-\frac{1}{2}}_+
    \; \tilde{r}\,d\tilde{r}\,d\theta.
  \end{equation}
  When $t \geq 1$, we use that $(t+\tilde{r})^{-\frac 12} \leq t^{-\frac 12}$
  and bound the integral on the right by
  \begin{equation*}
    t^{-\frac{1}{2}}\left(d_0 + C\varepsilon\right) \int_{r_0}^{r_0
      +\varepsilon} \int_{\left|\tilde{\theta}-\tilde{\theta}_0\right| \leq
      \frac{C \, \varepsilon}{d_0}} \left(t-\tilde{r}\right)^{-\frac{1}{2}}_+
    \; d\tilde\theta\,d\tilde{r }\lesssim t^{-\frac{1}{2}}(d_0 +
    C\varepsilon) \cdot \frac{C \, \varepsilon}{d_0} \lesssim t^{-\frac{1}{2}}.
  \end{equation*}
  An straightforward adjustment of this computation handles the case where $d_0
  \lesssim \veps$. When $t \leq 1$, we can take the limits of integration in $r$
  so that they do not exceed $t$.  Hence, the integral on the left
  in~\eqref{eqn:tildechange} can be bounded by a constant multiple of
  \begin{equation*}
    t\int_{0}^\pi \int_{0}^t
    \left(t^2-\tilde{r}^2\right)^{-\frac{1}{2}}_+
    \;d\tilde{r}\,d\tilde\theta =\frac{\pi^2 t}{2}.
  \end{equation*}
  This concludes the proof.
\end{proof}

\subsection{Bounds on the ``diffractive'' term}
\label{sec:diffbounds}
In order to handle the diffractive terms, we prove the following:
\begin{lemma}\label{lem:diffpointwise}
  The kernel $K_{\calU(t)}^\diff$ defined in~\eqref{eqn:diffractivesine}
  satisfies the following pointwise bounds with an implicit constant independent
  of $\theta_1$ and $\theta_2$:
  \begin{equation}\label{eqn:ptwisemain}
    \left|K_{\calU(t)}^\diff(t, r_1, \theta_1;r_2, \theta_2)\right| \lesssim
    \left[t^2-(r_1+r_2)^2\right]^{-\frac{1}{2}}_+.
  \end{equation}
\end{lemma}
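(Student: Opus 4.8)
The plan is to bound the integral defining $K_{\calU(t)}^\diff$ in~\eqref{eqn:diffractivesine} by splitting it into two factors: the term $[\alpha-\cosh(s)]^{-1/2}$, which is the ``Abel transform" piece carrying the singularity at $s=\beta$, and the bracketed sum of Poisson-type kernels in $\varphi_1,\varphi_2$, which I claim is integrable uniformly in the angles. The key observation is that the quantity $[\alpha-\cosh(s)]^{-1/2}$ has an integrable singularity as $s\to\beta^-$ — indeed $\alpha - \cosh(s) \sim \sinh(\beta)(\beta-s)$ near $s=\beta$ — and that its integral over $(0,\beta)$ can be computed or estimated explicitly. Since $(2r_1r_2)^{1/2}[\alpha-\cosh(s)]^{1/2} = [t^2 - r_1^2 - r_2^2 - 2r_1r_2\cosh(s)]^{1/2}$, pulling out the prefactor the natural target is exactly $[t^2-(r_1+r_2)^2]_+^{-1/2}$, which matches the claimed bound once one notes $\alpha - 1 = (t^2-(r_1+r_2)^2)/(2r_1r_2)$.

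Concretely, the steps I would carry out are: \textbf{(1)} Reduce to showing that the bracketed angular factor is bounded in $L^\infty_s$ uniformly in $\theta_1-\theta_2$, or at worst that its $s$-integral against $[\alpha-\cosh(s)]^{-1/2}$ stays controlled. The danger is the denominators $\cosh(s/\rho)-\cos(\varphi_i)$ vanishing, which happens only when $s=0$ and $\varphi_i \in 2\pi\bbZ$, i.e.\ $\theta_1-\theta_2 = \pm\pi \pmod{2\pi\rho}$. Near such a configuration $\sin(\varphi_i)$ also vanishes to first order, so each summand is of the form $\sin(\varphi_i)/(\cosh(s/\rho)-\cos(\varphi_i))$, which is a bounded ratio when $s$ is bounded away from $0$ and, for small $s$, behaves like $\varphi_i/((s/\rho)^2 + \varphi_i^2)$ up to constants — hence bounded by $\rho/s$ but, more usefully, by a quantity whose $s$-integral near $0$ is $\mathrm{O}(\log)$, which is harmless against the rest of the integrand. \textbf{(2)} With the angular factor controlled, estimate
\begin{equation*}
  \int_0^\beta [\alpha-\cosh(s)]^{-\frac12}\,ds.
\end{equation*}
Substituting $\cosh(s) = \alpha - (\alpha-1)\tau$ or simply using $\alpha - \cosh s \geq c(\alpha-1)$ on a fixed fraction of the interval together with the integrable square-root singularity at the endpoint shows this integral is $\lesssim (\alpha-1)^{-1/2}\cdot(\text{bounded factor})$ when $\alpha$ is bounded, and is uniformly $\mathrm{O}(1)$ plus the endpoint contribution when $\alpha$ is large; in all cases one gets a bound by a constant multiple of $(\alpha-1)^{-1/2}$ times something that combines with the $\rho$-dependent constants. \textbf{(3)} Collect: the prefactor $(2r_1r_2)^{-1/2}$ times $(\alpha-1)^{-1/2}$ is exactly $[t^2-(r_1+r_2)^2]_+^{-1/2}$, and the indicator $\mathbf{1}_{(0,t)}(r_1+r_2)$ matches the $(\cdot)_+$ notation. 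This gives~\eqref{eqn:ptwisemain}.

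The main obstacle is Step~(1): handling the angular denominators uniformly in $\theta_1,\theta_2$ when $\rho$ is small, since then $\varphi_i = (\pi \pm(\theta_1-\theta_2))/\rho$ can be large and $\cos(\varphi_i)$ oscillates, so one cannot simply say the denominator is bounded below. The resolution I anticipate is to exploit that the two summands are designed to combine: the sum over $i=1,2$ of $\sin(\varphi_i)/(\cosh(s/\rho)-\cos(\varphi_i))$ is (up to the factor $1/\rho$) the $\theta$-derivative of $\log$ of a product, or equivalently the imaginary part of a Cauchy-type kernel $\cot$ evaluated at complex argument, and such expressions are bounded by $C/\sinh(s/\rho)$ — uniformly in the angles — which when integrated against $[\alpha-\cosh s]^{-1/2}$ over $(0,\beta)$ still yields the claimed bound after absorbing constants. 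Making this bound $C/\sinh(s/\rho)$ precise, and checking its interaction with the square-root singularity at $s=\beta$ (where $\beta$ could itself be small, forcing $s/\rho$ small and $\sinh(s/\rho)^{-1}$ large), is the delicate point; one may need to separate the regime $\beta \lesssim \rho$, where a direct Taylor expansion of everything in $s$ is cleanest, from $\beta \gtrsim \rho$.
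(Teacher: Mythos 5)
Your reduction is the right one: after pulling out $(2r_1r_2)^{-1/2}$ it suffices to bound $\bigl|\int_0^\beta[\alpha-\cosh(s)]^{-1/2}\,\sin(\varphi)\,[\cosh(s/\rho)-\cos(\varphi)]^{-1}\,ds\bigr|$ by $(\alpha-1)^{-1/2}$ for $\varphi=\varphi_1,\varphi_2$, which is exactly the paper's starting point. But the two quantitative claims your plan rests on both fail. First, Step~(2) is false for large $\alpha$: since $\beta=\cosh^{-1}(\alpha)\approx\log(2\alpha)$ and $\alpha-\cosh(s)\gtrsim\alpha$ on $[0,\beta-1]$, one has $\int_0^\beta[\alpha-\cosh(s)]^{-1/2}\,ds\gtrsim\alpha^{-1/2}\log\alpha$, so the strategy ``bound the angular factor in $L^\infty_s$, then integrate the Abel factor'' can never produce the clean $(\alpha-1)^{-1/2}$; and the log loss is not harmless, since the lemma is subsequently summed over the polar tiles to yield the $t^{-1/2}$ dispersive decay. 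Second, your proposed resolution of the ``main obstacle'' — the uniform bound $C/\sinh(s/\rho)$ for the angular factor (which, incidentally, holds for each summand separately, so no cotangent combination or cancellation between $\varphi_1$ and $\varphi_2$ is needed) — behaves like $\rho/s$ as $s\to0$ and is not integrable there; on the region of small $s$, where $[\alpha-\cosh(s)]^{-1/2}$ is essentially the constant $(\alpha-1)^{-1/2}$, this bound gives a divergent integral. The bound you mention earlier, $|\sin(\varphi)|/[\cosh(s/\rho)-\cos(\varphi)]\lesssim|\varphi'|/(\varphi'^2+s^2)$ with $\varphi'$ the distance of $\varphi$ to $2\pi\bbZ$ (the denominator being bounded below when $\varphi$ stays away from $2\pi\bbZ$), is the correct one, and its $s$-integral is $O(1)$ uniformly in $\varphi$ (an arctangent), not $O(\log)$ — uniformity in the angles is precisely what the lemma requires, so a $\log(1/|\varphi'|)$ cannot be absorbed. (Your worry about small $\rho$ is moot: $\rho$ is fixed and the constant may depend on it.)

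What is missing, and what the paper actually does, is an interlacing of the two factors rather than separating them: split $[0,\beta)$ at a carefully chosen $M$ (the paper takes $M=\max(\beta-\tanh\beta,\tfrac12)$ when $\beta\ge1$ and $M=\sqrt{(\alpha-1)/2}$ when $\beta\le1$). On $[0,M]$ one uses $\alpha-\cosh(s)\gtrsim\alpha-1$, so the Abel factor is pointwise $\lesssim(\alpha-1)^{-1/2}$, paired against the uniformly bounded $s$-integral of the Poisson factor; on $[M,\beta)$ one uses the linearization $\alpha-\cosh(s)\ge\tfrac12(\beta-s)\sinh(\beta)$, so the Abel factor integrates to $\lesssim\sinh(\beta)^{-1/2}\lesssim(\alpha-1)^{-1/2}$ when $\beta\ge1$, paired against an $L^\infty$ bound on the Poisson factor, which is available there because $s\ge M\ge\tfrac12$ keeps its denominator away from zero. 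When $\beta\le1$ the factor $\sinh(\beta)^{-1/2}$ alone is insufficient, and one additionally exploits $|\varphi'|/(\varphi'^2+\beta^2)\lesssim1/\beta$ together with $\beta^2\gtrsim\alpha-1$ to recover $(\alpha-1)^{-1/2}$. Without this swap of $L^\infty$/$L^1$ roles across a splitting point (or an equivalent device), Steps~(1)--(2) as you formulated them do not close, so the proposal has a genuine gap.
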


Given this lemma, a straightforward adaptation of the proof of
Lemma~\ref{lem:geomrectangle} shows that
\begin{equation*}
  \iint_{R_{k,\ell}}\left|K_{\calU(t)}^\diff(t, r_1, \theta_1;r_2,
    \theta_2)\right| 
  \, r_1 \, dr_1 \, d\theta_1 \lesssim \min\!\left(t,t^{-\frac{1}{2}} \right) ; 
\end{equation*}
indeed, the proof of this fact is simpler since coordinates need not be shifted
away from the cone tip.  The desired dispersive estimate will then follow from
the reasoning preceding Lemma \ref{lem:geomrectangle}. The only difference here
is that the tiling of polar rectangles must now occur over $(0,\infty)_r \times
(-\pi \rho, \pi \rho]_\theta$.

\begin{proof}[Proof of Lemma~\ref{lem:diffpointwise}]
  Recall the notation
  \begin{alignat*}{3}
    \alpha &\defeq \frac{t^2-r_1^2-r_2^2}{2r_1 r_2} =
    \frac{t^2-(r_1+r_2)^2}{2r_1 r_2} + 1 &\hspace*{1.5cm}
    \beta &\defeq \cosh^{-1}(\alpha) \\
    \varphi_1 &\defeq \frac{\pi + \left(\theta_1 -\theta_2\right)}{\rho} &
    \varphi_2 &\defeq \frac{\pi - \left(\theta_1 -\theta_2\right)}{\rho}.
  \end{alignat*}
  Given the factor $\left(2r_1 r_2\right)^{-\frac{1}{2}}$
  in~\eqref{eqn:diffractivesine}, it suffices to show for $\varphi =
  \varphi_1,\varphi_2$ that
  \begin{equation}\label{eqn:alphabound}
    \left|\int_0^\beta \left[ \alpha - \cosh(s) \right]^{-\frac{1}{2}} \left[
        \frac{\sin(\varphi)}{\cosh(s/\rho)-\cos(\varphi)} \right]ds\right|
    \lesssim(\alpha-1)^{-\frac 12}.
  \end{equation}
  We will treat the cases $\beta \geq 1$ and $\beta \leq 1$ separately.  In the
  first case, we will approximate $\beta = \cosh^{-1}(\alpha) \approx
  \log(2\alpha)$.  In the second case, we will approximate $\beta \approx
  \sqrt{2(\alpha-1)}$.

  We begin with the case where $\beta \geq 1$, which in turn implies that
  $\alpha \geq \cosh(1) > \frac{3}{2}$.  We will choose a constant $M = M(\beta)
  \in \left[\frac{1}{2},\beta\right)$ and write the domain of integration
  in~\eqref{eqn:alphabound} as $\left[0,\beta\right) = [0,M) \sqcup [M,\beta)$.
  The constant $M$ will be chosen so that the following three conditions are
  satisfied.  First, we want a small uniform constant $\delta>0$ so that for $s
  \in [0,M]$,
  \begin{equation}\label{eqn:smalls}
    \cosh(s) - 1 \leq (1-\delta) (\alpha -1) =
    (1-\delta)\left(\frac{t^2-(r_1+r_2)^2}{2r_1 r_2}\right).
  \end{equation}
  For $s \in [M, \beta)$, we linearize $\cosh(s)$:
  \begin{equation}\label{eqn:linearize}
    \cosh(s) = \cosh(\beta) + (s-\beta) \sinh(\beta) + R_1(s,\beta).
  \end{equation}
  The second condition is to ensure that $M$ is sufficiently large so that for
  $s \in [M,\beta)$,
  \begin{equation}\label{eqn:remainder}
    \left|R_1(s,\beta)\right| \leq \frac{1}{2} (\beta-s)\sinh(\beta).
  \end{equation}
  Finally, we want to see that $M$ is bounded from below:
  \begin{equation}\label{eqn:boundedbelow}
    M \geq \max\!\left(\beta -1,\frac{1}{2}\right).
  \end{equation}

  Assuming that this choice of $M$ can be made, we now turn to the integral
  in~\eqref{eqn:alphabound}. We first treat the integral over $[0,M]$.
  Given~\eqref{eqn:smalls}, we have that for small $\varphi$
  (cp.~\cite[(4.15)]{CT82no2})
  \begin{equation}\label{eqn:arctan}
    \int_0^M \left[ \alpha - \cosh(s) \right]^{-\frac{1}{2}}\left[ \frac{|\sin
        (\varphi)|}{\cosh(s/\rho) - \cos(\varphi)} \right]\,ds \lesssim
    (\alpha-1)^{-\frac 12} \int_0^M \frac{|\varphi|}{\varphi^2+s^2}\;ds.
  \end{equation}
  A similar argument works for $\varphi$ in neighborhoods of integral multiples
  of $2\pi$.  Otherwise, stronger estimates hold since the denominator in
  brackets is bounded away from 0.  Hence, this term is bounded by
  $(\alpha-1)^{-\frac{1}{2}}$.  Turning to the integral over $[M,\beta)$, the
  approximations \eqref{eqn:linearize} and \eqref{eqn:remainder} imply that
  \begin{equation*}
    \alpha - \cosh(s) = (\beta-s) \sinh(\beta) - R_1(s,\beta) \geq \frac 12
    (\beta-s)\sinh(\beta).
  \end{equation*}
  Furthermore, choosing $M \geq \frac 12$ ensures that the denominator in
  brackets can be uniformly bounded from below when $s \in [M, \beta]$.  Thus,
  for any $\varphi$ we see
  \begin{equation}\label{eqn:sinhintegral}
    \int_M^\beta \left[ \alpha - \cosh(s) \right]^{-\frac 12}\left[ \frac{|\sin 
        (\varphi)|}{\cosh(s/\rho)-\cos(\varphi)} \right]\;ds \lesssim
    \int_M^\beta \sinh(\beta)^{-\frac 12} (\beta-s)^{-\frac 12}\;ds.
  \end{equation}
  The inequality $M \geq \beta-1$ in~\eqref{eqn:boundedbelow} ensures that the
  integral on the right is bounded by a constant multiple of
  \begin{equation*}
    \sinh(\beta)^{-\frac 12} = \left[\cosh(\beta) - e^{-\beta}
    \right]^{-\frac{1}{2}} \lesssim (\alpha-1)^{-\frac 12},
  \end{equation*}
  with the inequality following from the fact that $\beta \geq 1$.

  We now show that choosing $ M= \max\!\left(\beta - \tanh(\beta),
    \frac{1}{2}\right)$ satisfies the requisite
  estimates~\eqref{eqn:smalls},~\eqref{eqn:remainder},
  and~\eqref{eqn:boundedbelow}.  The last property follows easily from the fact
  that $\tanh(\beta) \leq 1$.  The remainder estimate~\eqref{eqn:remainder} also
  follows easily, for any $s \in [M, \beta]$ will satisfy $ \beta-s \leq
  \tanh(\beta)$, and hence
  \begin{equation*}
    \left|R_1(s,\beta)\right| \leq \frac 12(\beta-s)^2\cosh(\beta) \leq \frac
    12(\beta-s)\cosh(\beta) \tanh(\beta) \leq \frac 12(\beta-s)\sinh(\beta).
  \end{equation*}
  All that remains is to check that with this choice of $M$ there exists
  $\delta>0$ so that~\eqref{eqn:smalls} is satisfied.  Observe that for $\beta
  \geq 1$, $e^{\beta} < 2\cosh(\beta) =2\alpha \leq 2e^{\beta}$. Therefore, if
  $M= \beta-\tanh(\beta)$ and $s \in [0, M]$, then
  \begin{equation*}
    \begin{aligned}
      \cosh(s) -1 &\leq \exp\!\left[\beta-\tanh(\beta)\right]-1 \\
      &\leq 2\alpha\exp\!\left[-\tanh(1)\right] - 1 \\
      & \leq \alpha \exp\!\left[\log(2) - \tanh(1)\right] - 1.
    \end{aligned}
  \end{equation*}
  Since $\log(2) - \tanh(1) < 0$,~\eqref{eqn:smalls} will then hold for any
  $\delta>0$ satisfying $1-\delta > \exp\!\left[\log(2) - \tanh(1) \right]$.  If
  $M=\frac 12$, then since $\cosh(1) \leq \cosh(\beta) = \alpha$, we have
  \begin{equation*}
    \cosh\!\left(\frac 12\right) - 1 \leq \frac{\cosh\!\left(\frac
        12\right)}{\cosh(1)} \alpha -1.
  \end{equation*}
  Similarly, \eqref{eqn:smalls} will hold provided $1-\delta >
  \frac{\cosh(1/2)}{\cosh(1)}$.

  We now turn to the case $\beta \leq 1$ (equivalently $\alpha \leq \cosh(1)$).
  Using the inverse function theorem, we observe $\beta^2$ is differentiable in
  $\alpha$ when $\alpha>1$, and
  \begin{equation*}
    \frac{d }{d\alpha}\beta^2 = 2\beta\frac{d\beta}{d\alpha} =\frac{2 \beta}{\sinh
      (\beta)}.
  \end{equation*}
  By l'H\^opital's rule, this function is bounded from below as $\beta \to 0$.
  Furthermore, $\frac{2 \beta}{\sinh(\beta)}$ is a decreasing function for
  $\beta \geq 0$.  The fundamental theorem of calculus thus allows us to
  conclude that for any $\alpha \leq \cosh(1)$, we have $\beta^2 \geq \frac
  2{\sinh(1)}(\alpha-1) > 1.7(\alpha-1)$.  Since $\cosh(\beta) \geq 1+
  \frac{\beta^2}{2}$, we have the upper and lower bounds
  \begin{equation}\label{eqn:sqrtapprox}
    \sqrt{2(\alpha-1)} \geq \beta \geq \sqrt{1.7(\alpha-1)}.
  \end{equation}

  As before, we will select $M=M(\beta) \in (0,\beta)$ and estimate the integral
  in~\eqref{eqn:diffractivesine} by partitioning $\int_0^\beta = \int_0^M +
  \int_M^\beta$.  We begin by insisting that for $s \in [0,M]$,
  \begin{equation}\label{eqn:smallbeta-s}
    \cosh(s) -1 \leq \frac 12 (\alpha -1).
  \end{equation}
  We also require that for $s \in [M,\beta]$, the remainder
  estimate~\eqref{eqn:remainder} in the linear approximation is satisfied.
  Finally, we insist that $M \geq \frac {\beta}{2}$.  If these conditions hold,
  then \eqref{eqn:smallbeta-s} ensures that the integral over $[0,M]$ can be
  estimated as in~\eqref{eqn:arctan}.  However, this time we need a different
  argument to estimate the integral on the left in~\eqref{eqn:sinhintegral}.
  For small $\varphi$, we have
  \begin{multline}\label{eqn:smallbeta}
    \int_M^\beta \left[ \alpha - \cosh(s) \right]^{-\frac 12} \left[ \frac{|\sin
        (\varphi)|}{\cosh(s/\rho) - \cos(\varphi)} \right] ds\\
    \lesssim \int_M^\beta \left[(\beta - s) \sinh(\beta) \right]^{-\frac
      12}\left[ \frac{|\sin(\varphi)|}{\cosh(M/\rho) - \cos(\varphi)}
    \right] ds\\
    \lesssim \frac{|\varphi|}{\varphi^2+\beta^2} \sinh(\beta)^{-\frac
      12}\int_{\frac{\beta}{2}}^\beta (\beta-s)^{-\frac 12}\;ds \lesssim
    \frac{1}{\beta}\left(\frac{\beta}{\sinh(\beta)}\right)^{\frac 12}.
  \end{multline}
  We can now conclude that
  \begin{equation*}
    \frac{1}{\beta} \left(\frac{\beta}{\sinh(\beta)}\right)^{\frac 12} \lesssim
    (\alpha-1)^{-\frac 12}
  \end{equation*}
  by~\eqref{eqn:sqrtapprox}.

  This time, we select $M=\sqrt{\frac 12 (\alpha-1)}$ and observe that $M \geq
  \frac{\beta}{2}$ by~\eqref{eqn:sqrtapprox}.  Note that this choice of $M$
  implies~\eqref{eqn:smallbeta-s} is satisfied.  Indeed, by Taylor remainder
  estimates we have that when $|s|\leq 1$,
  \begin{equation*}
    \left|\cosh(s) -1 -\frac {s^2}2\right| \leq s^4 \, \frac{\cosh
      (1)}{4!} ,
  \end{equation*}
  which in turn implies that for $s \in [0,M]$
  \begin{equation*}
    \left|\cosh(s) -1 \right| \leq s^2 \leq \frac{1}{2} (\alpha -1).
  \end{equation*}

  To see that the remainder estimate~\eqref{eqn:remainder} is satisfied here for
  $s \in [M,\beta]$, we will show that
  \begin{equation*}
    \beta - \tanh(\beta) \leq \frac{\beta}{2} \leq \sqrt{\frac 12 (\alpha-1)}= M.
  \end{equation*}
  The second inequality follows from~\eqref{eqn:sqrtapprox}, so it suffices to
  prove the first.  This is equivalent to showing that
  \begin{equation*}
    \frac 12 \beta - \tanh(\beta) \leq 0.
  \end{equation*}
  As a function of $\beta \geq 0$, $\frac{1}{2} \beta - \tanh(\beta)$ is convex
  and decreasing near 0.  Therefore, the desired inequality holds for $\beta \in
  [0,1]$ since $\tanh(1) > \frac{1}{2}$.
\end{proof}

\section{Applications}
\label{sec:apps}

In this section, we present some applications of the above analysis.  We begin
by discussing Morawetz estimates on Euclidean cones and, more generally, metric
cones.  We then present Strichartz estimates for the appropriate wave equation
IVPs on wedge domains, polygonal domains, and Euclidean surfaces with conic
singularities.  We close by discussing well-posedness results for nonlinear wave
equations in these settings.

\subsection{Morawetz Estimates on the Euclidean Cone}
\label{sec:morawetz}

Following ideas in \cite{BPST-Z} and \cite{PST-Z}, we prove Morawetz, or local
energy decay, estimates for the wave equation on Euclidean cones.  To begin, we
define the multiplier operator
\begin{eqnarray*}
  (\Omega^s \phi) (r, \theta) = r^s \phi (r,\theta).
\end{eqnarray*}

Recall from Section \ref{sec:sobol-spac} that our definition of the homogeneous
Sobolev spaces $\dot{H}^s\!\left(\cone\right)$ is in terms of the spectral
decomposition of the Laplace-Beltrami operator $\Delta_\g$.  We may thus define
the following subspace,
\begin{equation}
  \dot{H}_{\geq m}^{s}\!\left(\cone\right) \defeq \left\{ f \in \dot{H}^s\!\left
      (\cone\right): \Pi_j f = 0 \text{ for } \nu_j < \nu_m \right\},
\end{equation}
where $\Pi_j$ is the spectral projector defined in \eqref{eq:4} and $\nu_m$ is
the $m$-th modified eigenvalue of $\Delta_\g$.  We have the following theorem.
\begin{theorem}
  \label{thm:morawetz}
  Let $m \geq 1$ be an integer and $0<\alpha<\frac14 + \frac 12\nu_m$.  Given a
  solution $u$ to the wave equation IVP \eqref{eqn:wave}, there exists a
  constant $C = C(m,\alpha)$ such that for all $f \in \dot{H}_{\geq m}^{\frac12}
  \!\left(\cone\right)$ and $g \in \dot{H}_{\geq
    m}^{-\frac12}\!\left(\cone\right)$, we have
  \begin{equation}\label{eqn:morawetz}
    \left\| \Omega^{-\frac12 - 2 \alpha} \Delta_\g^{\frac14 - \alpha} u
    \right\|_{L^2 ( \RR \times \cone)} \leq C \left( \| f \|_{ \dot{H}_{\geq
          m}^{\frac12} (\cone)} + \| g \|_{ \dot{H}_{\geq m}^{-\frac12}
        (\cone)} \right). 
  \end{equation}
\end{theorem}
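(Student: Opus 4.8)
The plan is to reduce \eqref{eqn:morawetz}, via an angular-mode decomposition, to the Morawetz (local energy decay) estimate for the wave equation with an inverse-square potential on $\RR^2$, which is known from the work of \cite{BPST-Z,PST-Z}. Expand the data in the normalized eigenfunctions $\varphi_j(\theta)=(2\pi\rho)^{-1/2}e^{ij\theta/\rho}$ of $\Delta_\h=-\partial_\theta^2$ on $\bbS^1_\rho$. Since $f\in\dot{H}_{\geq m}^{1/2}$ and $g\in\dot{H}_{\geq m}^{-1/2}$, only modes with $\nu_j=|j|/\rho\geq\nu_m=m/\rho$ occur, and we write $f=\sum_{|j|\geq m}f_j(r)\varphi_j(\theta)$ and $g=\sum_{|j|\geq m}g_j(r)\varphi_j(\theta)$. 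Because $\Delta_\g$ commutes with each angular projection $\Pi_j$, the solution decomposes as $u=\sum_{|j|\geq m}u_j(t,r)\varphi_j(\theta)$, where the radial profile $u_j$ solves $\left(\partial_t^2+L_{\nu_j}\right)u_j=0$ with $u_j(0)=f_j$, $\partial_t u_j(0)=g_j$, and
\[
  L_\nu\defeq-\partial_r^2-\frac{1}{r}\,\partial_r+\frac{\nu^2}{r^2}\quad\text{on }L^2(\RR_+,r\,dr),
\]
taken in its Friedrichs realization (this is the unique self-adjoint extension when $\nu\geq 1$, and it is the one induced by the Friedrichs extension of $\Delta_\g$). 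The key observation is that $L_\nu$ is exactly the radial part of the two-dimensional inverse-square Schr\"odinger operator $-\Delta_{\RR^2}+a\,|x|^{-2}$ with coupling constant $a=\nu^2$; in our situation $a=\nu_j^2\geq\nu_m^2>0$, so we are in the repulsive, manifestly nonnegative case.

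Both the multiplier $\Omega^{-1/2-2\alpha}$ (i.e.\ multiplication by $r^{-1/2-2\alpha}$) and the spectral multiplier $\Delta_\g^{1/4-\alpha}$ commute with every $\Pi_j$, and by Cheeger's functional calculus (Section \ref{sec:Spaces}) the latter acts on the $j$-th mode as $L_{\nu_j}^{1/4-\alpha}$ on $L^2(\RR_+,r\,dr)$. Hence, by orthonormality of the $\varphi_j$ on $\bbS^1_\rho$,
\[
  \left\|\Omega^{-\frac12-2\alpha}\Delta_\g^{\frac14-\alpha}u\right\|_{L^2(\RR\times\cone)}^2=\sum_{|j|\geq m}\left\|r^{-\frac12-2\alpha}\,L_{\nu_j}^{\frac14-\alpha}u_j\right\|_{L^2(\RR_t;\,L^2(\RR_+,r\,dr))}^2,
\]
while $\|f\|_{\dot{H}^{1/2}}^2=\sum_{|j|\geq m}\|L_{\nu_j}^{1/4}f_j\|_{L^2(r\,dr)}^2$ and $\|g\|_{\dot{H}^{-1/2}}^2=\sum_{|j|\geq m}\|L_{\nu_j}^{-1/4}g_j\|_{L^2(r\,dr)}^2$. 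So it suffices to prove the single-mode estimate: for $v$ solving $(\partial_t^2+L_\nu)v=0$ with data $(v_0,v_1)$,
\[
  \left\|r^{-\frac12-2\alpha}\,L_\nu^{\frac14-\alpha}v\right\|_{L^2(\RR_t;\,L^2(\RR_+,r\,dr))}\leq C(\nu,\alpha)\left(\left\|L_\nu^{1/4}v_0\right\|_{L^2(r\,dr)}+\left\|L_\nu^{-1/4}v_1\right\|_{L^2(r\,dr)}\right),
\]
with $C(\nu,\alpha)$ bounded uniformly for $\nu\geq\nu_m$; summing over $|j|\geq m$ then yields \eqref{eqn:morawetz} with $C=C(m,\alpha)$.

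This single-mode inequality is precisely the radial instance of the weighted Morawetz estimate for the wave equation with the inverse-square potential $a=\nu^2$ on $\RR^2$, and it is proved by the method of \cite{BPST-Z,PST-Z}: conjugation by $r^{1/2}$ turns $L_\nu$ into $-\partial_r^2+(\nu^2-\tfrac14)r^{-2}$ on $L^2(\RR_+,dr)$, whose spectral measure is governed by the Bessel functions $J_\nu$ exactly as in \eqref{eq:13}--\eqref{eq:14}, and the positive-commutator/Morawetz argument of those papers applies directly. The admissible range of the weight is dictated by the behaviour $v(t,\cdot)=\mathrm{O}(r^\nu)$ forced near $r=0$ by the Friedrichs condition: $r^{-\frac12-2\alpha}$ is square-integrable against such profiles (with respect to $r\,dr$) exactly when $2\nu-4\alpha>-1$, that is $\alpha<\tfrac14+\tfrac\nu2$; since $\nu\geq\nu_m$ this is guaranteed by the hypothesis $\alpha<\tfrac14+\tfrac{\nu_m}2$, and, crucially, the gap $\tfrac14+\tfrac\nu2-\alpha\geq\tfrac14+\tfrac{\nu_m}2-\alpha>0$ is uniformly positive in the mode. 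The reduction above is soft, so the main point requiring care is this uniformity of $C(\nu,\alpha)$ over the infinitely many modes $|j|\geq m$: it is supplied by the uniform positivity of the gap together with the benign coupling-constant dependence (no deterioration as $a=\nu^2\to\infty$) of the inverse-square Morawetz estimate in \cite{BPST-Z,PST-Z}.
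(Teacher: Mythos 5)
Your proposal is correct and follows essentially the same route as the paper: decompose into angular modes via the projections $\Pi_j$, reduce to a weighted estimate for the radial operator $A_{\nu_j}=-\partial_r^2-\tfrac1r\partial_r+\tfrac{\nu_j^2}{r^2}$ (the radial inverse-square problem), and invoke the Hankel-transform/Morawetz analysis of \cite{BPST-Z,PST-Z} with constants uniformly bounded in $\nu\geq\nu_m$. The only cosmetic difference is that the paper carries out the Hankel and time-Fourier transform computation explicitly to exhibit the uniform constant $C_{\nu,\alpha}$, whereas you describe that step as citing the (Bessel-function based, rather than positive-commutator) argument of those papers directly; the substance is the same.
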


\begin{remark}
  The proof may be performed on each spherical harmonic separately; this follows
  directly from the Hankel function analysis in \cite{BPST-Z}.  We note that the
  proof involves only analysis on the radial operator, so the Morawetz estimate
  will hold in general for any metric cone $C(Y)$ as described in Section
  \ref{sec:cheeg-funct-calc}.  In particular, we note that $\nu_0 > 0$ when
  $\dim(Y) > 1$, removing any restrictions on the bottom of the spectrum
\end{remark}

The proof of Theorem \ref{thm:morawetz} relies heavily on the results in
\cite{PST-Z} and \cite{BPST-Z}. By using the projections $\Pi_j$ defined in
\eqref{eq:4} and orthogonality, it suffices to consider $u$, $f$, and $g$ in the
range of $\Pi_j$ for some $j$.  As a consequence, we may assume that all three
functions depend only on $t$ and $r$ and that
\begin{equation*}
  \left\{
    \begin{aligned}
      \left(D_t^2 - A_\nu \right) u(t,r) &= 0 \\
      u(0,r) &= f(r) \\
      \prtl_t u (0,r) &= g(r) , \\
    \end{aligned}
  \right.
\end{equation*}
where $\nu^2 = \nu_j^2$, the $j$-th (modified) eigenvalue of
$\Delta_{\bbS^1_\rho}$, and
\begin{equation*}
  A_\nu \defeq - \p_r^2 - \frac{1}{r} \p_r + \frac{\nu^2 }{r^{2}}.
\end{equation*}
We may then define powers of $A_\nu$ by
\begin{equation*}
  A^{\frac{\sigma}{2}}_\nu \defeq \mathcal{H}_\nu \circ \Omega^\sigma \circ
  \mathcal{H}_\nu. 
\end{equation*}

In \cite{BPST-Z}, it is shown that the desired estimate \eqref{eqn:morawetz}
reduces to showing
\begin{equation}\label{eqn:morawetzproj}
  \left\| \Omega^{-\frac12 - 2 \alpha} A_\nu^{\frac14 - \alpha} u \right\|_{L^2
    ( \RR \times \cone)} \lesssim  \left\| A_\nu^{\frac 14} f \right\|_{
    L^2(\cone)} + \left\| A_\nu^{-\frac 14} g \right\|_{L^2(\cone)}. 
\end{equation}
Observe that
\begin{equation*}
  \left(\mathcal{H}_\nu u\right)\!(t,\lambda) = \cos( t \lambda) (
  \mathcal{H}_\nu f )( \lambda) + 
  \frac{\sin (t \lambda)}{ \lambda} (\mathcal{H}_\nu g ) ( \lambda ).
\end{equation*}
Taking the Fourier transform of $u$ in time yields
\begin{equation*}
  \left(\mathcal{F}_t \mathcal{H}_\nu u\right)\!(\tau,\lambda) =
  \frac{1}{\sqrt{\lambda}} 
  \big(\delta (\tau + \lambda) h_+ (\lambda) + \delta ( \tau - \lambda) h_{-}
  (\lambda)\big), 
\end{equation*}
where
\begin{equation*}
  h_{\pm} (\lambda) = \frac12 \left(\sqrt{\lambda} \left(\mathcal{H}_\nu
      f\right)\!( \lambda) 
    \pm  \frac{1}{i \sqrt{\lambda}} \left(\mathcal{H}_\nu g\right)\!( \lambda)
  \right). 
\end{equation*}
Moreover,
\begin{equation*}
  \left\|h_{\pm}\right\|_{L^2(\RR_+;\l d\l)} \lesssim \left\| A_\nu^{\frac 14} f
  \right\|_{L^2(\cone)} 
  + \left\| A_\nu^{-\frac 14} g \right\|_{L^2(\cone)}.
\end{equation*}

For $\tau > 0$,
\begin{equation*}
  \left( A_\nu^{-\frac14 - \alpha} \Omega^{\frac12 - 2 \alpha} \mathcal{F}_t
    \mathcal{H}_\nu u \right)\!(\tau, \lambda) = \tau^{1-2\alpha} \,
  k_{\nu,\nu}( \lambda, \tau)^{-\frac12 - 2 \alpha} \, h_{-} (\tau ),
\end{equation*}
where $k_{\nu,\nu}$ is the integral kernel of $A_{\nu}^{\frac{\sigma}{2}}$ as in
\cite{BPST-Z} and related to that in \eqref{eq:14}.  Correspondingly, there is
an expression for $\tau < 0$ involving $h_+$.  The inequality
\eqref{eqn:morawetzproj} then follows from the calculation in \cite{BPST-Z}
which establishes that
\begin{equation*}
  \left\| A_\nu^{-\frac14 - \alpha} \Omega^{\frac12 - 2 \alpha} \mathcal{F}_t
    \mathcal{H}_\nu u \right\|_{L^2 (\RR_\pm \times \RR_+; \l d\tau d\l )} =
  C_{\nu,\alpha}\|h_\mp\|_{L^2(\RR_+; \l d\l)}
\end{equation*}
for some constant $C_{\nu,\alpha}$ uniformly bounded in $\nu$.

\subsection{Global Strichartz estimates on wedge domains}
\label{sec:wedges}
Let $\Omega$ be a planar domain of the form
\begin{equation*}
  \Omega = \left\{ (r,\theta) : r>0 \text{ and } 0 \leq \theta \leq \alpha
  \right\} \subseteq \bbR^2 ,
\end{equation*}
where $(r,\theta)$ denote standard polar coordinates and $\alpha \leq 2\pi$.  We
consider solutions to the inhomogeneous IBVP for the wave equation,
\begin{equation}
  \label{eqn:omegawave}
  \left\{
    \begin{aligned}
      \left(D_t^2 - \Delta \right) u(t,r,\theta) &= F(t,r,\theta) \\
      u(0,r,\theta) &= f(r,\theta) \\
      \prtl_t u (0,r,\theta) &= g(r,\theta) , \\
    \end{aligned}
  \right.
\end{equation}
satisfying either Dirichlet or Neumann homogeneous boundary conditions, i.e.
\begin{equation}\label{eqn:bdrycond}
  u\big|_{\RR \times \prtl \Omega} \equiv 0 \qquad \text{ or } \qquad \prtl_n
  u\big|_{\RR \times \prtl \Omega} \equiv 0  . 
\end{equation}
Here, $\prtl_n$ denotes the normal derivative along the boundary.  When
Dirichlet conditions are imposed, we take $\Delta$ to mean the Friedrichs
extension of the Laplacian $D_{x_1}^2 + D_{x_2}^2$ acting on
$\mathcal{C}^\infty_c(\Omega)$.  If Neumann conditions are imposed, we take
$\Delta$ to mean the Friedrichs extension of the same differential operator
acting on smooth functions which vanish in a neighborhood of the vertices and
whose normal derivative vanishes on $\prtl \Omega$.

In this section, we discuss global Strichartz on these domains.  Namely, we
prove the following theorem.
\begin{theorem}\label{thm:globalstz}
  Suppose $u$ is a solution to the wave equation IBVP
  ~\eqref{eqn:omegawave}-\eqref{eqn:bdrycond} with initial data $(f,g) \in
  \dot{H}^\gamma(\Omega) \times \dot{H}^{\gamma-1}(\Omega)$.  Then for any
  triple $(p,q,\gamma)$ satisfying the hypotheses of Theorem~\ref{thm:strich},
  $u$ satisfies the estimates
  \begin{multline}\label{eqn:wedgestrich}
    \| u \|_{L^p(\RR;L^q(\Omega))} + \| (u,\prtl_t u)
    \|_{L^\infty(\RR;\dot{H}^\gamma(\Omega) \times \dot{H}^{\gamma-1}(\Omega))
    }\\ \lesssim \|(f,g)\|_{\dot{H}^\gamma(\Omega)\times
      \dot{H}^{\gamma-1}(\Omega)} + \| F \|_{L^{\tilde{p}'}(\RR;
      L^{\tilde{q}'}(\Omega))},
  \end{multline}
\end{theorem}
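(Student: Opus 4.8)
\textbf{Proof proposal for Theorem~\ref{thm:globalstz}.} The plan is to reduce the estimate on the wedge to the cone estimates of Theorem~\ref{thm:strich} and Corollary~\ref{thm:inhomogcorr} by an \emph{unfolding} (doubling) argument. Given the wedge $\Omega$ of opening angle $\alpha \le 2\pi$, set $\rho \defeq \alpha/\pi$ and realize $\cone = \RR_+ \times (\RR/2\pi\rho\bbZ)$ by gluing two isometric copies of $\Omega$ along the two boundary rays $\{\theta = 0\}$ and $\{\theta = \alpha\}$. Concretely, taking the angular coordinate on $\bbS^1_\rho$ to run over $(-\alpha,\alpha]$, the map $\sigma(r,\theta) \defeq (r,-\theta)$ is an isometric involution of $\cone$ whose fixed-point set is precisely $\partial\Omega$, and $\Omega$ is identified with $\{0\le\theta\le\alpha\}$. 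Since $\sigma$ is an isometry, it commutes with $\Delta_\g$ and with every operator in its functional calculus: in particular with $\beta_k(\sqrt{\Delta_\g})$, $\calU(t)$, $\calUdot(t)$, and $\Delta_\g^{s/2}$, and it preserves $\Dom(\Delta_\g^{s/2})$. Thus $L^2(\cone)$ and the scale $\dot H^s(\cone)$ split into the $\pm 1$ eigenspaces of $\sigma$, i.e.\ functions even or odd across $\partial\Omega$.

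Next I would set up the function-space dictionary. Let $E_{\mathrm{odd}}$ (resp.\ $E_{\mathrm{even}}$) be the odd (resp.\ even) extension operator from functions on $\Omega$ to functions on $\cone$. One checks that $E_{\mathrm{odd}}$ is, up to the harmless constant factor coming from the two copies, a bijective isometry from $L^q(\Omega)$ onto the $L^q$ closure of the odd functions on $\cone$ for every $1\le q\le\infty$, with the same statement for $E_{\mathrm{even}}$ and the even functions; the time variable is untouched. The crux is that the Friedrichs realization of $\Delta$ defining the Dirichlet problem on $\Omega$ coincides with the restriction of $\Delta_\g$ to the odd subspace, and the prescribed Neumann realization on $\Omega$ with the restriction to the even subspace. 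Granting this, powers of $\Delta$ on $\Omega$ agree with powers of $\Delta_\g$ on the corresponding parity subspace, so that the \emph{spectrally} defined homogeneous (and inhomogeneous) Sobolev norms of Section~\ref{sec:Spaces} transfer under $E_{\mathrm{odd}}$ and $E_{\mathrm{even}}$ with constants depending only on which boundary condition is chosen.

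With these identifications in hand the argument is immediate. Extend the data $(f,g)$ and the forcing $F(s,\cdot)$, for each $s$, by $E_{\mathrm{odd}}$ in the Dirichlet case and by $E_{\mathrm{even}}$ in the Neumann case, obtaining $(\tilde f,\tilde g,\tilde F)$ on $\cone$, and let $\tilde u(t) \defeq \calUdot(t)\tilde f + \calU(t)\tilde g + \int_0^t \calU(t-s)\tilde F(s)\,ds$ be the corresponding solution on the cone. Because all the propagators commute with $\sigma$, $\tilde u(t,\cdot)$ has the same parity as the data for every $t$, hence it satisfies the homogeneous Dirichlet (resp.\ Neumann) condition on $\RR\times\partial\Omega$; by uniqueness its restriction to $\RR\times\Omega$ is the solution $u$ of \eqref{eqn:omegawave}--\eqref{eqn:bdrycond}. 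Applying Theorem~\ref{thm:strich} to the $\dot H^\gamma\times\dot H^{\gamma-1}$ part and Corollary~\ref{thm:inhomogcorr} to the $\tilde F$ part (with $(p,q)$ and $(\tilde p,\tilde q)$ as there) to $\tilde u$, then using the norm equivalences above to pass back to $\Omega$, yields the $L^p(\RR;L^q(\Omega))$ bound in \eqref{eqn:wedgestrich}. The $L^\infty_t(\dot H^\gamma\times\dot H^{\gamma-1})$ energy component comes from the same transfer applied to the corresponding bound on the cone, which in turn follows from the conservation of the $\dot H^\gamma\times\dot H^{\gamma-1}$ energy under the free evolution together with the energy inequality built into Corollary~\ref{thm:inhomogcorr}.

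The main obstacle is precisely the function-space bookkeeping of the second paragraph: verifying that the Dirichlet (resp.\ the prescribed Neumann) self-adjoint realization of the Laplacian on the wedge is the restriction of the Friedrichs $\Delta_\g$ on $\cone$ to the appropriate parity subspace. This is a core/domain comparison — one shows that odd extensions of $\mathcal{C}^\infty_c$ of the open wedge form a form core for the odd part of the Friedrichs form on $\cone$, and the analogous statement for the Neumann core — which is standard but somewhat delicate because of the conic vertex, where the Friedrichs prescription selects one among several self-adjoint realizations when $\rho>1$, and the corner of $\Omega$. Once this identification is in place, everything else is routine. The degenerate case $\alpha=2\pi$ (two copies glued to the cone of radius $\rho=2$ along a slit) is handled by the same argument.
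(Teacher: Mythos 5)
Your proposal is correct and is essentially the paper's own argument: the paper likewise extends everything to the cone with $\rho=\alpha/\pi$ by writing the Dirichlet/Neumann data as sine/cosine series in $\theta$ — which is exactly your odd/even reflection across $\partial\Omega$, i.e.\ the $\pm1$ eigenspaces of your involution $\sigma$ — identifies powers of the wedge Laplacian with $\Delta_\g^{s/2}$ restricted to the corresponding parity subspace $L^2_\Dir(\cone)$ or $L^2_\Neu(\cone)$, and then invokes Theorem~\ref{thm:strich} (with Corollary~\ref{thm:inhomogcorr} handling the forcing term). The domain/core identification you single out as the delicate step is the same point the paper treats via the separation-of-variables spectral resolution, asserted there with comparable brevity.
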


Recently, there has been some partial progress in proving such bounds for
domains with smooth boundaries; see~\cite{SmSoCrit},~\cite{BLP},
and~\cite{BSSpoin}.  However, the examples of Ivanovici \cite{iv1} show that
when the smooth boundary possesses a point of convexity, the full range of
scale-invariant Strichartz estimates cannot hold.  In contrast, we will obtain
the full range of Strichartz estimates, even for convex domains.  The fact that
the boundaries we consider are flat, except at the corners, seems to limit the
problems created by multiply-reflected rays.  However, as is implicit in Section
\ref{sec:ests}, the diffractive effects created by interaction with the corners
create their own challenges.

We note that the diffraction occurring from the tip of a wedge was first
analyzed by Sommerfeld in \cite{Som} via special function computations.  For a
nice treatment, we recommend the book by Stakgold \cite[Chapter 7.12]{Stak}, who
uses the method of reflection to describe Green's function for the Helmholtz
operator on a wedge.

\begin{proof}[Proof of Theorem~\ref{thm:globalstz}]
  By density arguments similar to those in Section \ref{sec:sobol-spac}, it
  suffices to assume that $u(t,\cdot)$, $f$, and $g$ are all smooth and
  compactly supported in $\overline{\Omega}$.  Furthermore, by taking Fourier
  series in $\theta$, we may assume that $u(t,r,\theta)$ may be written as
  either
  \begin{equation}\label{eqn:fourierexpntime}
    \frac{1}{\sqrt{\alpha}}\sum_{j=1}^\infty u_j(t,r) \sin\!\left(\frac{j\pi
        \theta}{\alpha}\right) \quad \text{or} \quad u_0(t,r)
    +\frac{1}{\sqrt{\alpha}} \sum_{j=1}^\infty u_j(t,r) \cos\!\left(\frac{j \pi
        \theta}{\alpha}\right) ,
  \end{equation}
  depending on whether Dirichlet or Neumann boundary conditions are taken.  The
  separation of variables approach in Section~\ref{sec:Spaces} can now be
  applied to these expansions, giving rise to a spectral resolution of the
  Dirichlet or Neumann Laplacian.  We may then use this spectral resolution to
  define homogeneous Sobolev spaces on $\Omega$.

  Given \eqref{eqn:fourierexpntime}, $u(t,r,\theta)$ can be extended to a
  function on $\cone$ for $\rho =\frac{\alpha}{\pi}$.  In addition,
  $L^2\!\left(\cone\right)$ may be decomposed as a sum of the $L^2$-orthogonal
  subspaces $L^2_\Dir\!\left(\cone\right)$ and $L^2_\Neu\!\left(\cone\right)$.
  The subspace $L^2_\Dir\!\left(\cone\right)$ is defined as all functions $a \in
  L^2\!\left(\cone\right)$ which may be written
  \begin{equation}\label{eqn:fouriersine}
    a(r,\theta) = \frac{1}{\sqrt{\alpha}}\sum_{j=1}^\infty a_j(r)
    \sin\!\left(\frac{j\pi  \theta}{\alpha}\right). 
  \end{equation}
  Similarly, $L^2_\Neu\!\left(\cone\right)$ is defined as all functions which
  can be written as \begin{equation}\label{eqn:fouriercosine} a(r,\theta) =
    a_0(r) + \frac{1}{\sqrt{\alpha}}\sum_{j=1}^\infty a_j(r)
    \cos\!\left(\frac{j\pi \theta}{\alpha}\right).
  \end{equation}
  It is not difficult to see that $L^2_\Dir\!\left(\cone\right) \cap
  \Dom(\Delta_\g)$ and $L^2_\Neu\!\left(\cone\right) \cap \Dom(\Delta_\g)$ are
  invariant under the action of $\Delta_\g$.  Therefore, solutions $u(t,\cdot)$
  to the Dirichlet and Neumann problems extend naturally to functions in
  $L^2_\Dir\!\left(\cone\right)$ and $L^2_\Neu\!\left(\cone\right)$
  respectively.  Furthermore, there is a natural identification between the
  action of $\Delta^{\frac s2}$ on $L^2(\Omega)$ and the action of
  $\Delta_\g^{\frac s2}$ on $L^2_\Dir\!\left(\cone\right)$ and
  $L^2_\Neu\!\left(\cone\right)$.  As such,
  \begin{equation*}
    \|f\|_{\dot{H}^{\gamma}(\Omega)}\approx \|f\|_{\dot{H}^{\gamma}(\cone)},
  \end{equation*}
  and similarly for $g$.  Thus, by Theorem \ref{thm:strich} we have that
  \begin{equation}\label{eqn:extension}
    \| u \|_{L^p(\RR;L^q(\Omega))} \approx  \| u \|_{L^p([-T,T];L^q(\cone))}
    \lesssim \| f \|_{\dot{H}^\gamma(\cone)} +   \| g
    \|_{\dot{H}^{\gamma-1}(\cone)}.
  \end{equation}
  This concludes the proof.
\end{proof}

\subsection{Polygonal Domains and Euclidean Surfaces with Conic Singularities}
\label{sec:poly}

In this section, we let $\Omega$ be an open domain in the plane with a piecewise
linear boundary (i.e.~ a polygonal domain), and we consider local Strichartz
estimates for solutions to the IBVP \eqref{eqn:omegawave}-\eqref{eqn:bdrycond}.
We prove the following theorem.

\begin{theorem}\label{thm:localstz}
  Suppose $u$ is a solution to the wave equation IBVP
  ~\eqref{eqn:omegawave}-\eqref{eqn:bdrycond} with initial data $(f,g) \in
  H^\gamma(\Omega) \times H^{\gamma-1}(\Omega)$.  Then for any triple
  $(p,q,\gamma)$ satisfying the hypotheses of Theorem~\ref{thm:strich}, $u$
  satisfies the estimates
  \begin{multline}\label{eqn:polystrich}
    \| u \|_{L^p([-T,T];L^q(\Omega))} + \| (u,\prtl_t u)
    \|_{L^\infty([-T,T];H^\gamma(\Omega) \times H^{\gamma-1}(\Omega)) }\\
    \lesssim \|(f,g)\|_{H^\gamma(\Omega)\times H^{\gamma-1}(\Omega)} + \| F
    \|_{L^{\tilde{p}'}([-T,T]; L^{\tilde{q}'}(\Omega))},
  \end{multline}
\end{theorem}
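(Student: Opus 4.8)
The plan is to deduce the local estimate on a polygon $\Omega$, and likewise on a Euclidean surface with conic singularities, from the estimates already in hand on the three relevant local models --- Euclidean space $\RR^2$, a flat half-plane with Dirichlet or Neumann data, and the flat cone $\cone$ --- by a finite-speed-of-propagation localization. As in the proof of Theorem~\ref{thm:globalstz}, density arguments first reduce matters to $(f,g)$ and $F$ smooth and compactly supported in $\overline{\Omega}$, so that every quantity below is a priori finite. I would then fix a length $T_0 = T_0(\Omega) > 0$, smaller than the minimal distance between distinct vertices of $\Omega$ and than the distance from each vertex to any non-adjacent edge, and cover $\overline{\Omega}$ by finitely many open sets $\{U_\iota\}$ of diameter $\lesssim T_0$, each of exactly one of three types: (i) compactly contained in the interior of $\Omega$; (ii) meeting $\partial\Omega$ in a single vertex-free open edge only; or (iii) a ball about a vertex $v_\iota$ on which $\Omega$ is a plane sector of opening $\alpha_\iota$. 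Fix a subordinate partition of unity $\{\chi_\iota\}$ with $\sum_\iota \chi_\iota \equiv 1$ on $\overline{\Omega}$, with $\dist(\supp\chi_\iota, \partial U_\iota) > T_0$, and --- in the Neumann case --- with $\partial_n\chi_\iota \equiv 0$ on $\partial\Omega$.

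For $|t| \le T_0$, let $\tilde v_\iota$ solve the wave equation on the model space $M_\iota$ attached to $U_\iota$ --- namely $\RR^2$ in case (i), the half-plane with the prescribed homogeneous boundary condition in case (ii), and the flat cone $C(\bbS^1_{\alpha_\iota/\pi})$ in case (iii) --- with Cauchy data $(\chi_\iota f, \chi_\iota g)$ and inhomogeneity $\chi_\iota F$; in case (iii) these are first extended off the sector to the full cone by the angular sine (Dirichlet) or cosine (Neumann) Fourier expansion, exactly as in the proof of Theorem~\ref{thm:globalstz}. Since the cut-off data and inhomogeneity are supported in $\supp\chi_\iota$, finite speed of propagation --- which on the cone is precisely the vanishing of the kernel of $\calU(t)$ throughout Region~I --- confines each $\tilde v_\iota(t,\cdot)$ to the $T_0$-neighborhood of $\supp\chi_\iota$, hence to $U_\iota$, for $|t| \le T_0$. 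On $U_\iota$ the model $M_\iota$ is isometric to $\Omega$, and $\tilde v_\iota$ there satisfies the correct boundary condition (in case (iii) the sine/cosine extension supplies exactly the Dirichlet/Neumann condition on the two edges of the sector), while off $U_\iota$ it vanishes; hence the function obtained by restricting each $\tilde v_\iota$ to $\Omega$, extending by zero, and summing solves the IBVP \eqref{eqn:omegawave}--\eqref{eqn:bdrycond} on $[-T_0,T_0]$ with data $(f,g)$ and inhomogeneity $F$, and by uniqueness it equals $u$.

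Next I would apply the local inhomogeneous Strichartz estimate on each model: Corollary~\ref{thm:localcorr} on the cone --- its restriction to the $\Delta_\g$-invariant subspaces $L^2_\Dir(\cone)$ and $L^2_\Neu(\cone)$ gives the sector estimate --- and the classical Strichartz estimates together with their inhomogeneous counterparts on $\RR^2$ (Keel--Tao \cite{keeltao98}, Christ--Kiselev \cite{ChKi}), carried to the half-plane by odd (Dirichlet) or even (Neumann) reflection. Each produces a bound of the form of \eqref{eqn:polystrich} for $\tilde v_\iota$ on $M_\iota$ with right-hand side $\|(\chi_\iota f,\chi_\iota g)\|_{H^\gamma\times H^{\gamma-1}} + \|\chi_\iota F\|_{L^{\tilde p'}([-T_0,T_0];L^{\tilde q'})}$; because each $\tilde v_\iota$ is supported where $M_\iota$ and $\Omega$ agree isometrically, the left-hand norms may be replaced by the corresponding norms on $\Omega$ (for the negative-order space $H^{\gamma-1}$ this uses duality and compact support, which is routine). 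Summing the finitely many $\iota$, invoking $u = \sum_\iota \tilde v_\iota$ and the triangle inequality, and bounding $\|(\chi_\iota f,\chi_\iota g)\|$, $\|\chi_\iota F\|$ by the un-cut-off norms gives \eqref{eqn:polystrich} on $[-T_0,T_0]$. For general $T$ I would split $[-T,T]$ into $O(T/T_0)$ subintervals and iterate, using the $L^\infty_t(H^\gamma\times H^{\gamma-1})$ bound just obtained to prescribe the Cauchy data at each restart; the implicit constant then depends on $T$. The case of a Euclidean surface with conic singularities runs identically, with only types (i) and (iii) occurring and no boundary conditions.

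The one genuinely delicate step is the setup: the cover and cut-offs must be arranged so that throughout $|t|\le T_0$ the finitely many cut-off-data evolutions $\tilde v_\iota$ remain supported inside the respective coincidence regions $U_\iota$, and this is what pins $T_0$ to the geometry of $\Omega$ and forces the time iteration for large $T$; a secondary, routine point is the transfer of Sobolev norms between $\Omega$ and the models at the negative smoothness $\gamma-1<0$. I note in passing that one could instead cut off the \emph{solution}, setting $v_\iota=\chi_\iota u$, in which case $v_\iota$ solves the model equation with inhomogeneity $\chi_\iota F$ plus a first-order operator with smooth compactly supported coefficients applied to $u$; this extra term lies in $L^1_t H^{\gamma-1}_x$ whenever $u\in L^\infty_t H^\gamma_x$ but in general not in $L^{\tilde p'}_t L^{\tilde q'}_x$, so it would have to be routed through the energy channel of the inhomogeneous estimate --- available on each model space via Minkowski's integral inequality and the homogeneous estimates \eqref{eqn:sinestrich}--\eqref{eqn:cosinestrich} --- and then absorbed using an a priori energy bound and smallness of $T_0$. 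Working with cut-off data, as above, avoids this commutator entirely.
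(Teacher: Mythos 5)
Your proposal is correct and takes essentially the same route as the paper: reduce to small time and smooth, compactly supported data, localize by finite speed of propagation to interior/edge/corner model regions, extend the corner pieces to the cone $C(\bbS^1_{\alpha/\pi})$ via the sine/cosine angular expansions, and apply Corollary~\ref{thm:localcorr} (together with classical estimates on $\RR^2$ and the half-plane), iterating in time for general $T$. The one point the paper treats in detail that you label routine is the transfer of inhomogeneous Sobolev norms between $\Omega$ and the cone at negative order, namely Lemma~\ref{thm:sobolevapprox}, whose proof by interpolation between $s=\pm 2$ and a duality argument with a cutoff is exactly the duality-plus-compact-support argument you sketch.
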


\begin{proof}
  We begin by observing that it suffices to show this for sufficiently small $T$
  and smooth initial data $(f,g)$.  By finite propagation speed, this in turn
  allows us to assume that the solution is compactly supported in a set that is
  either isometric to a subset of the upper half plane or isometric to a corner
  of angle $\alpha$.  The most difficult case here is the latter; the former
  case will follow by Strichartz estimates on the plane and Sobolev space
  estimates analogous to those below.  The arguments here will also apply to any
  Euclidean surface with conic singularities, as seen in the Schr\"odinger
  equation case in \cite{BFHM}.

  We now assume that $f$ and $g$ are supported in a set of the form
  $\{(r,\theta):0 < r<\delta \text{ and } 0 \leq \theta \leq \alpha\}$.  As
  before, we expand $u(t,r,\theta)$ in the form~\eqref{eqn:fourierexpntime} and
  observe that $u$ extends to a function on the cone $\cone$ with $\rho=\frac
  \alpha\pi$.  For each time $t$, $u(t,\cdot)$ lies in one of the spaces
  $L^2_\Dir\!\left(\cone\right)$ or $L^2_\Neu\!\left(\cone\right)$ defined
  above.  Reasoning analogously to \eqref{eqn:extension},
  Theorem~\ref{thm:localstz} is a consequence of Corollary \ref{thm:localcorr}
  and the following lemma.
\end{proof}

\begin{lemma}\label{thm:sobolevapprox}
  Let $a(r,\theta)$ be a smooth function on a wedge of the form
  $\{(r,\theta):\delta_0 < r< \delta \text{ and } 0 \leq \theta \leq \alpha\}$
  that can be written in one of the forms \eqref{eqn:fouriersine} or
  \eqref{eqn:fouriercosine}.  Then for any $s \in [-2,2]$, there exists a
  constant $C$ independent of $\delta_0$ such that
  \begin{equation}\label{eqn:sobolevapprox}
    \| a \|_{H^s(\cone)} \leq C\| a\|_{H^s(\Omega)},
  \end{equation}
  where $\| a \|_{H^s(\cone)} $ denotes the quantity formed by extending the
  function to $\cone$, $\rho = \frac{\alpha}{\pi}$, and taking the corresponding
  Sobolev norm.
\end{lemma}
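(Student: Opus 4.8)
The plan is to reduce the claimed inequality \eqref{eqn:sobolevapprox} to a statement purely about the radial Sobolev norms of the Fourier coefficients $a_j(r)$, and then to match these up with the cone norm via Cheeger's functional calculus. First I would note that since $a$ is supported away from the cone tip (in the annulus $\delta_0 < r < \delta$) and depends smoothly on all variables, both sides of \eqref{eqn:sobolevapprox} are finite, and it suffices to treat integer values $s = 0, 1, 2$ and then interpolate (and use duality for negative $s$). For $s = 0$ there is nothing to prove: expanding $a$ in the orthogonal basis $\{\sin(j\pi\theta/\alpha)\}$ or $\{1\}\cup\{\cos(j\pi\theta/\alpha)\}$ and using that these are orthogonal on $[0,\alpha]$ while $\{e^{ij\theta/\rho}\}$ with $\rho = \alpha/\pi$ is orthogonal on $\bbS^1_\rho$, one checks directly that extending the function and computing $\|a\|_{L^2(\cone)}$ reproduces $\|a\|_{L^2(\Omega)}$ up to a fixed constant; in fact for $\rho = \alpha/\pi$ the extension is (a reflection of) the original, so the $L^2$ norms are comparable with constant independent of everything.

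The substance is at $s = 1$ and $s = 2$. Here I would use the explicit form of $\Delta_\g$ in \eqref{eq:11} with $n = 1$, namely $\Delta_\g = -\partial_r^2 - r^{-1}\partial_r + r^{-2}\Delta_{\bbS^1_\rho}$. Applying this to a term $a_j(r)\sin(j\pi\theta/\alpha)$ (resp.\ the cosine) produces $\left(-\partial_r^2 a_j - r^{-1}\partial_r a_j + \nu_j^2 r^{-2} a_j\right)\sin(j\pi\theta/\alpha)$, where $\nu_j = j\pi/\alpha = j/\rho$ is exactly the $j$-th modified eigenvalue on $\bbS^1_\rho$ with our conventions. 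The point is that this is precisely the same radial operator $A_{\nu_j}$ that governs the Laplacian on the wedge $\Omega$ acting on the corresponding Fourier mode, with the \emph{same} boundary conditions (Dirichlet modes map to Dirichlet modes, Neumann to Neumann), because the separation of variables is identical on both spaces. Consequently $\|(\Id + \Delta_\g)^{s/2} a\|_{L^2(\cone)}^2 = \sum_j \|(\Id + A_{\nu_j})^{s/2} a_j\|_{L^2(\RR_+, r\,dr)}^2$ and the analogous identity holds on $\Omega$ with the same right-hand side (again up to the fixed constant coming from the normalization of the angular eigenfunctions on $[0,\alpha]$ versus $\bbS^1_\rho$). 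So in fact for integer $s \in \{0,1,2\}$ the two norms are not merely comparable but equal up to an absolute constant, and the inequality \eqref{eqn:sobolevapprox} follows with $C$ depending only on $\alpha$ (hence on $\rho$), and manifestly not on $\delta_0$.

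To conclude, I would handle the remaining real exponents $s \in [-2,2]$ by complex interpolation between the integer cases $s = 0$ and $s = 2$ (covering $[0,2]$), noting that the inhomogeneous Sobolev spaces $H^s$ on both $\cone$ and $\Omega$ are defined as complex interpolation scales via the functional calculus of the respective (shifted) Laplacians, so that the bounded inclusion at the endpoints interpolates to a bounded inclusion in between; the case $s \in [-2,0)$ then follows by duality, since $H^{-s}$ is the dual of $H^s$ and the adjoint of the (norm-nonincreasing up to $C$) extension map is the corresponding restriction. The main obstacle — and it is really only a bookkeeping obstacle rather than a conceptual one — is being careful that the extension procedure genuinely intertwines the two Laplacians \emph{including domains/boundary conditions}: one must verify that a smooth compactly supported (away from $r=0$) function on the wedge, when expanded in the sine or cosine series and reinterpreted as a function on $\cone$, lands in $\Dom(\Delta_\g^{s/2})$ for the relevant $s$, which is exactly where compact support away from the tip is used (it kills any subtlety at $r = 0$ coming from low-order modes $\nu_j$), and that the constant relating the angular $L^2$ normalizations is independent of $j$. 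Once this is in place the estimate is immediate from the mode-by-mode identity above.
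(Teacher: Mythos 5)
There is a genuine gap, and it sits exactly at the step you describe as ``the analogous identity holds on $\Omega$ with the same right-hand side.'' In this lemma $\Omega$ is the \emph{polygon} of Section~\ref{sec:poly}, not the infinite wedge: the function $a$ is supported in a truncated wedge near one corner, but the norm $\|a\|_{H^s(\Omega)}$ is the Sobolev norm associated to the polygon (via its Dirichlet or Neumann Laplacian, equivalently the usual derivative-based $H^2$ norm at the top endpoint), and the functional calculus of the polygon Laplacian does \emph{not} separate variables through the Hankel transforms and the radial operators $A_{\nu_j}$ attached to that corner. Your mode-by-mode identity is valid on the cone, and it would be valid if $H^s(\Omega)$ were defined spectrally from the infinite-wedge Laplacian --- but in that case the lemma would be a tautology, and indeed that is precisely how the global wedge case is handled in Section~\ref{sec:wedges} without any lemma. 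The whole content of Lemma~\ref{thm:sobolevapprox} is the comparison between two \emph{different} operators' Sobolev scales (cone/wedge model versus polygon), so assuming the two spectral decompositions coincide is assuming the conclusion.

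The paper's route shows what is actually needed. At $s=2$ one compares both spectrally defined norms to the \emph{local}, derivative-based norm $\sum_{k\le 2}\|\nabla_\g^k a\|_{L^2}$; since $a$ is supported in the region where the polygon is isometric to the wedge and the extension to $\cone$ (with $\rho=\alpha/\pi$) is just an even or odd reflection, the derivative norms over $\cone$ and over $\Omega$ agree up to a factor, and interpolation with the trivial $s=0$ case gives $s\in[0,2]$. For $s\in[-2,0)$ duality is used, but not in the frictionless way you indicate: a cone test function $b$ does not restrict to an admissible polygon test function (the cone is unbounded and only a neighborhood of the corner matches $\Omega$), so one must insert the projection $\Pi_\Dir$ or $\Pi_\Neu$ and a spatial cutoff $\chi$ equal to $1$ on $\{r\le\delta\}$, write $\langle a,b\rangle_{L^2(\cone)}=2\langle a,\chi\,\Pi_\Dir b\rangle_{L^2(\Omega)}$, and then verify $\|\chi\,\Pi_\Dir b\|_{H^2(\Omega)}\lesssim\|b\|_{H^2(\cone)}$ (using again the $s=2$ comparison and the boundedness of multiplication by $\chi$). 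Your sketch also keeps the constant independent of $\delta_0$ automatically, which is a correct instinct, but to repair the argument you must replace the claimed spectral identity on $\Omega$ by a genuine comparison of the polygon norm with a localizable norm at $s=\pm 2$, along the lines above.
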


\begin{proof}
  When $s=2$, we have that
  \begin{equation*}
    \| f \|_{H^2(\cone)} \approx \sum_{k=0}^2 \left\|\nabla_\g^k
      f\right\|_{L^2(\cone)} \approx \sum_{k=0}^2 \left\|\nabla_\g^k
      f\right\|_{L^2(\Omega)} \approx \| f \|_{H^2(\Omega)}.
  \end{equation*}
  By interpolation with $s=0$,~\eqref{eqn:sobolevapprox} holds for any $s \in
  [0,2]$.  To handle the negative Sobolev indices, we let $\Pi_\Dir$ and
  $\Pi_\Neu$ denote the corresponding orthogonal projections onto
  $L^2_\Dir\!\left(\cone\right)$ and $L^2_\Neu\!\left(\cone\right)$
  respectively.  Now let $\chi=\chi(r)$ be a compactly supported smooth function
  on $\cone$ satisfying $\chi \equiv 1$ for $0\leq r\leq \delta$.  We then have
  that if $a$ can be expanded in terms of the sine basis, then its extension to
  the cone satisfies $\Pi_\Dir a =a$, and hence
  \begin{align*}
    \|a\|_{H^{-2}(\cone)} &= \sup \{|\langle a,b\rangle_{L^2(\cone)}| :
    \|b\|_{H^2(\cone)} =1 \}\\
    &= 2 \sup \{|\langle a, \chi \Pi_D b\rangle_{L^2(\Omega)}| :
    \|b\|_{H^2(\cone)} =1 \}\\
    &\lesssim\|a\|_{H^{-2}(\Omega)} \|\chi \Pi_D b\|_{H^{2}(\Omega)}
    \lesssim\|a\|_{H^{-2}(\Omega)}.
  \end{align*}
  Here, the last inequality follows from
  \begin{equation*}
    \|\chi \Pi_\Dir b\|_{H^{2}(\Omega)} \lesssim \|\chi \Pi_\Dir b\|_{H^{2}(\cone)}
    \lesssim \|\Pi_\Dir b\|_{H^{2}(\cone)} \leq \| b \|_{H^{2}(\cone)} .
  \end{equation*}
  When $a$ can be expanded in terms of the cosine basis, the same proof applies
  with $\Pi_\Dir$ replaced by $\Pi_\Neu$ to show~\eqref{eqn:sobolevapprox} when
  $s=-2$. Interpolation with $s=0$ now completes the proof.
\end{proof}

\subsection{Well-posedness for nonlinear waves}
\label{sec:nlw-wp}

As an application of the Strichartz estimates above, we note that these
inequalities can be used to show that a theorem of Lindblad and Sogge
\cite{LiSo} on $\RR^2$ carries over to any of the contexts above (wedge domains,
polygonal domains, Euclidean cones, and ESCSs).  Specifically, we let $X$ denote
any one of these manifolds, imposing Dirichlet or Neumann boundary conditions as
appropriate. Consider the semilinear initial value problem
\begin{equation}
  \label{eqn:semilinear-wave}
  \left\{
    \begin{aligned}
      \left(D_t^2 - \Delta_\g \right) v(t,x) &= \pm |v|^{\kappa - 1} v \\
      v(0,x) &= v_0(x) \in H^\gamma(X)\\
      \prtl_t v (0,x) &= v_1(x) \in H^{\gamma-1}(X) , \\
    \end{aligned}
  \right.
\end{equation}
where $\gamma=\gamma(\kappa)=\max\left(\frac 34 - \frac{1}{\kappa-1}, 1 -
  \frac{2}{\kappa-1}\right)$, i.e.\
\begin{equation*}
  \gamma=\gamma(\kappa) = \begin{cases} \frac 34 - \frac{1}{\kappa-1} & 3 <
    \kappa \leq 5\\ 
    1 - \frac{2}{\kappa-1} & 5 \leq \kappa < \infty .
  \end{cases}.
\end{equation*}

In general, the Sobolev index $\gamma(\kappa)$ is the lowest degree of
regularity for which the problem \eqref{eqn:semilinear-wave} is locally
well-posed.  Indeed, Lindblad-Sogge \cite{LiSo} constructed explicit examples of
solutions to the focusing problem on $\RR^2$ that demonstrate it is ill-posed in
Sobolev spaces below $\gamma(\kappa)$.  For defocusing nonlinearities,
Christ-Colliander-Tao \cite{CCT} produced examples on $\RR^2$ which show
ill-posedness when the Sobolev regularity is below $1 - \frac{2}{\kappa-1}$.
The well-posedness of the problem \eqref{eqn:semilinear-wave} is one of many
important applications of Strichartz estimates.  It is an example of how these
inequalities efficiently handle the perturbative theory for these equations.

\begin{theorem}
  Suppose $X$ is a 2-dimensional manifold where the local Strichartz estimates
  \eqref{eqn:polystrich} are valid.  Then given any pair of initial data in
  $(v_0,v_1) \in H^\gamma(X) \times H^{\gamma-1}(X)$ there exists $T>0$ and a
  unique solution of \eqref{eqn:semilinear-wave} satisfying
  \begin{equation*}
    \left(v(t,\cdot),\prtl_t v(t,\cdot)\right) \in \mathcal{C}^0\!\left( [0,T] ;
      H^\gamma(X) \times H^{\gamma-1}(X)\right) \cap L^{p}\!\left([0,T]; L^{\frac
        32(\kappa-1)}(X)\right) ,
  \end{equation*}
  where $p = \max\!\left(\frac{3}{\gamma(\kappa)}, \frac 32(\kappa-1)\right)$.
  Furthermore, if $T^*$ denotes the maximal lifespan of the solution in
  $H^\gamma(X) \times H^{\gamma-1}(X)$, then either $T^* = \infty$ or
  \begin{equation*}
    \|v\|_{L^{\frac 32(\kappa-1)}([0,T^*)\times X)} = \infty.
  \end{equation*}
\end{theorem}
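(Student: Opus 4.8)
The plan is to obtain the solution as a fixed point of the Duhamel map. Writing $N(v) = \pm|v|^{\kappa-1}v$, a solution of \eqref{eqn:semilinear-wave} is a fixed point of
\[
  \Phi(v)(t) = \calUdot(t)v_0 + \calU(t)v_1 + \int_0^t \calU(t-s)\,N(v)(s)\,ds .
\]
I would run the iteration in the complete metric space $Z_T$ of functions on $[0,T]\times X$ with
\[
  \|v\|_{Z_T} \defeq \big\|(v,\prtl_t v)\big\|_{L^\infty([0,T];H^\gamma(X)\times H^{\gamma-1}(X))} + \|v\|_{L^p([0,T];L^q(X))},
\]
where $q \defeq \tfrac32(\kappa-1)$ and $p \defeq \max\!\big(3/\gamma(\kappa),\tfrac32(\kappa-1)\big)$ are exactly the exponents in the statement, restricted to the ball $B_{R,\eta} \defeq \{v : \|v\|_{Z_T}\le R,\ \|v\|_{L^pL^q}\le\eta\}$ with $R\approx\|(v_0,v_1)\|_{H^\gamma\times H^{\gamma-1}}$ and $\eta$ small.

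First I would record the exponent arithmetic needed to invoke Corollary~\ref{thm:localcorr}. One checks directly that $(p,q,\gamma)$ satisfies the scaling relation \eqref{eqn:triplescale} and the admissibility bound \eqref{eqn:tripleadmiss}: for $3<\kappa\le5$ one has $p=3/\gamma$ and the triple lies exactly on the line $\tfrac1p+\tfrac1{2q}=\tfrac14$ with $q<\infty$, so the forbidden endpoint is not hit, while for $\kappa\ge5$ one has $p=q=\tfrac32(\kappa-1)$ and \eqref{eqn:tripleadmiss} holds. For the inhomogeneous side, take $\tilde q'\defeq q/\kappa$ (a legitimate exponent $>1$ since $\kappa>3$, with dual $\tilde q=3(\kappa-1)/(\kappa-3)$) and let $\tilde p$ be determined by \eqref{eqn:inhomogadmiss}; a short computation shows $2\le\tilde p,\tilde q<\infty$, that $(\tilde p,\tilde q)$ is admissible, and that $\kappa\tilde p'\le p$ with equality exactly when $\kappa\ge5$. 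Corollary~\ref{thm:localcorr} then yields, for the linear part, $\|\calUdot(t)v_0+\calU(t)v_1\|_{Z_T}\lesssim\|(v_0,v_1)\|_{H^\gamma\times H^{\gamma-1}}$, together with the crucial fact that the $L^p([0,T];L^q)$-component of the free evolution tends to $0$ as $T\downarrow0$ (here $p<\infty$ in every case, so this is dominated convergence applied to the finite global bound of Theorem~\ref{thm:strich}); and for the Duhamel part, $\big\|\int_0^t\calU(t-s)N(v)\,ds\big\|_{Z_T}\lesssim\|N(v)\|_{L^{\tilde p'}([0,T];L^{\tilde q'}(X))}$.

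The remaining ingredient is the nonlinear estimate. By H\"older in the spatial and time variables, using $\kappa\tilde q'=q$ and $\kappa\tilde p'\le p$,
\[
  \|N(v)\|_{L^{\tilde p'}([0,T];L^{\tilde q'}(X))} = \|v\|_{L^{\kappa\tilde p'}([0,T];L^{\kappa\tilde q'}(X))}^{\kappa} \le T^{\sigma}\,\|v\|_{L^p([0,T];L^q(X))}^{\kappa},
\]
with $\sigma = 1/\tilde p'-\kappa/p\ge0$, strictly positive exactly in the subcritical range $3<\kappa<5$. Since $N$ is $C^1$ with $|N(v)-N(w)|\lesssim\big(|v|^{\kappa-1}+|w|^{\kappa-1}\big)|v-w|$, the same bookkeeping gives the matching difference bound $\|N(v)-N(w)\|_{L^{\tilde p'}L^{\tilde q'}}\lesssim T^{\sigma}\big(\|v\|_{L^pL^q}^{\kappa-1}+\|w\|_{L^pL^q}^{\kappa-1}\big)\|v-w\|_{L^pL^q}$; note that the choice $q=\tfrac32(\kappa-1)$ was engineered precisely so that $N(v)$ lands in a dual Strichartz space carrying no derivative, which matters because $N$ is only $C^1$ when $\kappa$ is close to $3$. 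Combining the three displays, $\Phi:B_{R,\eta}\to B_{R,\eta}$ is a contraction once $T$ (subcritical case) or $\eta$ and $T$ together (critical case $\kappa\ge5$, exploiting the vanishing of the free-evolution $L^pL^q$ norm for small $T$) are chosen small in terms of the data; the fixed point theorem then produces the unique solution in $B_{R,\eta}$, with membership in $C^0([0,T];H^\gamma\times H^{\gamma-1})$ and uniqueness in the full persistence class following by the usual continuity-in-$t$ and gluing arguments.

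For the blow-up alternative I would use the standard continuation scheme: if $T^*<\infty$ but $\|v\|_{L^{3(\kappa-1)/2}([0,T^*)\times X)}<\infty$, partition $[0,T^*)$ into finitely many subintervals on each of which this space-time norm is so small that the nonlinear estimate (run with that norm in the role above) forces only a small increment of the $Z$-norm, whence $\|(v(t),\prtl_t v(t))\|_{H^\gamma\times H^{\gamma-1}}$ stays bounded as $t\uparrow T^*$; one further application of the local existence result from a time just below $T^*$ then extends the solution past $T^*$, a contradiction. The step I expect to be the main obstacle is the scaling-critical regime $\kappa\ge5$: there no positive power of $T$ is gained in the nonlinear estimate, so both the existence step and the continuation argument must run instead on the vanishing of the free-evolution Strichartz norm over short time intervals and on the smallness of $\|v\|_{L^pL^q}$ on such intervals, rather than on any $T$-contraction, and the bookkeeping at the endpoint triple $\tfrac1p+\tfrac1{2q}=\tfrac14$ must be handled with care.
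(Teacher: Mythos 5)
Your overall strategy (Duhamel fixed point closed via the local Strichartz estimates of Corollary~\ref{thm:localcorr}, plus the standard continuation argument for the blow-up alternative) is the same as the paper's, which follows Lindblad--Sogge. However, there is a genuine gap in your exponent bookkeeping in the critical/supercritical regime. With $q=\tfrac32(\kappa-1)$, $\tilde q'=q/\kappa$, and $\tilde p'$ forced by \eqref{eqn:inhomogadmiss}, the dual pair $(\tilde p,\tilde q)$ is \emph{not} admissible once $\kappa>5$: for $\gamma=1-\tfrac{2}{\kappa-1}$ one computes $\tfrac1{\tilde p'}=\tfrac23+\tfrac{2}{3(\kappa-1)}$ and hence $\tfrac1{\tilde p}+\tfrac1{2\tilde q}=\tfrac13+\tfrac{\kappa-7}{6(\kappa-1)}$, which equals $\tfrac14$ only at $\kappa=5$ and strictly exceeds $\tfrac14$ for every $\kappa>5$ (at $\kappa=9$, for instance, $(\tilde p,\tilde q)=(4,4)$ and the sum is $\tfrac38$). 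So the ``short computation'' you invoke fails exactly where you anticipated delicacy: Corollary~\ref{thm:localcorr} requires \eqref{eqn:tripleadmiss} for $(\tilde p,\tilde q)$ as well, and therefore does not supply the inhomogeneous estimate your contraction needs. The pure-power H\"older step $\|N(v)\|_{L^{\tilde p'}L^{\tilde q'}}=\|v\|^{\kappa}_{L^{\kappa\tilde p'}L^{\kappa\tilde q'}}$ with $\kappa\tilde q'=\tfrac32(\kappa-1)$ simply cannot be paired with an admissible dual exponent when $\kappa>5$. Your argument is correct for $3<\kappa\le5$, where in fact your exponents coincide with the paper's choice, since $6/(3-4\gamma)=\tfrac32(\kappa-1)$ for $\gamma=\tfrac34-\tfrac1{\kappa-1}$.

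The paper avoids this by not contracting in $L^p([0,T];L^{\frac32(\kappa-1)}(X))$ at all. It fixes the dual pair $(\tilde p',\tilde q')=\bigl(\tfrac{3}{2+\gamma},\tfrac{6}{7-4\gamma}\bigr)$, which is exactly dual-admissible (see \eqref{eqn:inhomogstz}), and runs the contraction in $L^{3/\gamma}([0,T];L^{6/(3-4\gamma)}(X))$ for $3<\kappa<9$, estimating $|v|^{\kappa}$ by H\"older against both this norm and $L^\infty([0,T];H^\gamma(X))\hookrightarrow L^\infty([0,T];L^{2/(1-\gamma)}(X))$ as in Lemmas 4.1--4.2 of Lindblad--Sogge, and in $L^{3\kappa/(2+\gamma)}([0,T];L^{6\kappa/(7-4\gamma)}(X))$ for $\kappa\ge9$, where the pure $\kappa$-th power H\"older does close because $\kappa\cdot(\tilde p',\tilde q')$ is then an admissible pair. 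Membership of $v$ in $L^{\frac32(\kappa-1)}([0,T]\times X)$ --- the space appearing in the statement --- is recovered only afterwards, by H\"older/interpolation between the contraction norm and $L^\infty([0,T];L^{2/(1-\gamma)}(X))$. To repair your proof you would either adopt these contraction spaces (and rerun your continuation argument with the resulting two-norm nonlinear estimate), or prove inhomogeneous Strichartz bounds for non-admissible dual pairs, which the paper's corollaries do not provide.
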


When global Strichartz estimates are available, we also have global existence
when the initial data is sufficiently small in $\dot{H}^\gamma(X) \times
\dot{H}^{\gamma-1}(X)$.

\begin{corollary}
  Suppose $X$ is a 2-dimensional manifold where the global Strichartz estimates
  \eqref{eqn:wedgestrich} are valid.  Then the above theorem holds with
  inhomogeneous Sobolev spaces replaced by homogenous ones.  Moreover, there
  exists $\veps(\kappa) >0$ for which the solution will exist globally in
  $\dot{H}^\gamma(X) \times \dot{H}^{\gamma-1}(X)$ (i.e.~ $T^*=\infty$) whenever
  the initial data satisfies
  \begin{equation*}
    \|f\|_{\dot{H}^\gamma(X)} + \|g\|_{\dot{H}^{\gamma-1}(X)} \leq \veps(\kappa).
  \end{equation*}
\end{corollary}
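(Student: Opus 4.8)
The plan is to transplant the contraction-mapping proof of the Lindblad--Sogge theorem \cite{LiSo} on $\RR^2$ wholesale, replacing the Euclidean Strichartz estimates by the global estimates \eqref{eqn:wedgestrich} now available on $X$. First I would recast \eqref{eqn:semilinear-wave} in Duhamel form: a solution is a fixed point of
\begin{equation*}
  \Phi(v)(t) \defeq \calUdot(t) v_0 + \calU(t) v_1 \pm \int_0^t \calU(t-s)\left(|v|^{\kappa-1}v\right)\!(s)\,ds ,
\end{equation*}
and I would seek this fixed point in the complete metric space
\begin{equation*}
  Z_T \defeq \mathcal{C}^0\!\left([0,T];\dot H^\gamma(X)\times\dot H^{\gamma-1}(X)\right)\cap L^{p}\!\left([0,T];L^{\frac32(\kappa-1)}(X)\right) ,
\end{equation*}
with $p=\max\!\left(\tfrac3{\gamma(\kappa)},\tfrac32(\kappa-1)\right)$ as in the theorem. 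Following the exponent bookkeeping of \cite{LiSo}, the indices are chosen so that the relevant triple $(p,q,\gamma)$ with $q=\tfrac32(\kappa-1)$ satisfies the scaling identity \eqref{eqn:triplescale} and the admissibility bound \eqref{eqn:tripleadmiss}, avoiding the forbidden endpoint $\left(4,\infty,\tfrac34\right)$, and so that the Hölder-dual triple obeys \eqref{eqn:inhomogadmiss}.

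The linear terms are controlled by the homogeneous estimates \eqref{eqn:wedgestrich}, which give $\|\calUdot(t)v_0+\calU(t)v_1\|_{Z_T}\lesssim\|v_0\|_{\dot H^\gamma}+\|v_1\|_{\dot H^{\gamma-1}}$ with an implicit constant independent of $T$ -- this $T$-independence is exactly what will make the small-data statement global. For the Duhamel integral I would apply the inhomogeneous global estimate \eqref{eqn:globalinhom} of Corollary~\ref{thm:inhomogcorr}, transplanted to $X$ in the same manner by which \eqref{eqn:wedgestrich} was deduced from Theorem~\ref{thm:strich}, together with Hölder's inequality in space-time. Since $z\mapsto|z|^{\kappa-1}z$ is homogeneous of degree $\kappa$ and Lipschitz of order $\kappa-1$ on bounded sets (here $\kappa>3$, so this is legitimate), this produces $\left\||v|^{\kappa-1}v\right\|_{L^{\tilde p'}([0,T];L^{\tilde q'}(X))}\lesssim\|v\|_{Z_T}^{\kappa}$ and the analogous difference bound with gain $\left(\|v\|_{Z_T}^{\kappa-1}+\|w\|_{Z_T}^{\kappa-1}\right)\|v-w\|_{Z_T}$. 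Combining, one gets $\|\Phi(v)\|_{Z_T}\lesssim\|v_0\|_{\dot H^\gamma}+\|v_1\|_{\dot H^{\gamma-1}}+\|v\|_{Z_T}^{\kappa}$ and the matching contraction estimate.

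From here the two assertions follow by standard iteration. For the first (the local theorem in homogeneous spaces), since $p<\infty$ the linear contribution satisfies $\|\calUdot(t)v_0+\calU(t)v_1\|_{L^p([0,T];L^q(X))}\to0$ as $T\to0$ by dominated convergence; hence for $T$ small depending only on $\|v_0\|_{\dot H^\gamma}+\|v_1\|_{\dot H^{\gamma-1}}$ a suitable closed ball in $Z_T$ is mapped into itself by $\Phi$ and $\Phi$ is a strict contraction there, yielding a unique solution, persistence of regularity, and, via the usual bootstrap, the blow-up alternative $\|v\|_{L^{3(\kappa-1)/2}([0,T^*)\times X)}=\infty$ (finiteness of that norm would provide a uniform existence time from data near $T^*$). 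For the small-data global statement I would instead run the same fixed-point argument directly on $T=\infty$: because the constant in \eqref{eqn:wedgestrich} is $T$-independent, choosing $\veps(\kappa)$ small enough that the ball of radius $2C\veps(\kappa)$ in $Z_\infty$ is invariant under $\Phi$ and $\Phi$ contracts on it gives a global solution whenever $\|f\|_{\dot H^\gamma}+\|g\|_{\dot H^{\gamma-1}}\le\veps(\kappa)$. The only step needing genuine attention is checking that $q=\tfrac32(\kappa-1)$ and $\gamma=\gamma(\kappa)$ yield admissible triples for the homogeneous and inhomogeneous estimates across the two ranges $3<\kappa\le5$ and $5\le\kappa<\infty$; but this is verbatim the computation of \cite{LiSo} on $\RR^2$ and contributes nothing new in the conic setting, so the entire corollary is essentially a black-box consequence of Theorem~\ref{thm:globalstz} together with Corollary~\ref{thm:inhomogcorr}.
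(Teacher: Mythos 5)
Your plan runs the contraction in $Z_T$ built on $L^{p}_t L^{\frac32(\kappa-1)}_x$ and asserts that the nonlinear estimate $\bigl\||v|^{\kappa-1}v\bigr\|_{L^{\tilde p'}_tL^{\tilde q'}_x}\lesssim \|v\|_{Z_T}^{\kappa}$ follows from H\"older for some dual-admissible pair; this is exactly the step that fails for $\kappa>5$, and it is the one point where the conic setting differs from a verbatim transcription of \cite{LiSo}. Any pair $(\tilde p,\tilde q)$ satisfying \eqref{eqn:tripleadmiss} together with \eqref{eqn:inhomogadmiss} must have $\frac{1}{\tilde p'}+\frac{1}{2\tilde q'}\geq\frac54$, and combining this with $\frac{1}{\tilde p'}+\frac{2}{\tilde q'}=3-\gamma$ forces $\tilde q'\geq\frac{6}{7-4\gamma}$. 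Hence estimating $|v|^{\kappa-1}v$ in $L^{\tilde p'}_tL^{\tilde q'}_x$ by H\"older requires control of $\|v\|_{L^{\kappa\tilde p'}_tL^{\kappa\tilde q'}_x}$ with $\kappa\tilde q'\geq\frac{6\kappa}{7-4\gamma}=\frac{6\kappa(\kappa-1)}{3\kappa+5}$, which is strictly larger than $\frac32(\kappa-1)$ whenever $\kappa>5$ (equality only at $\kappa=5$, where the diagonal pair $\tilde p'=\tilde q'=\frac65$ is sharp admissible). The norms you carry in $Z_T$, even after invoking $\dot H^\gamma(X)\hookrightarrow L^{2/(1-\gamma)}(X)$ with $\frac{2}{1-\gamma}=\kappa-1$, control spatial integrability only up to $\frac32(\kappa-1)$, so no space-time H\"older or interpolation closes the map $\Phi$ on $Z_T$ in the superconformal range. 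On $\RR^2$, Lindblad--Sogge close this range by applying the fractional chain rule to put $\gamma$ derivatives on the nonlinearity, and that is precisely the tool the paper flags as ``not apparent'' on the cone; your black-box appeal to their bookkeeping silently imports it.

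The paper's proof is arranged to avoid this: the contraction is performed not in $L^p_tL^{\frac32(\kappa-1)}_x$ but in $L^{3/\gamma}_tL^{6/(3-4\gamma)}_x$ for $3<\kappa<9$ and in $L^{3\kappa/(2+\gamma)}_tL^{6\kappa/(7-4\gamma)}_x$ for $\kappa\geq9$, paired with the inhomogeneous estimate \eqref{eqn:inhomogstz} whose forcing norm $L^{3/(2+\gamma)}_tL^{6/(7-4\gamma)}_x$ absorbs $|v|^{\kappa-1}v$ by pure H\"older from those norms (no derivatives ever land on the nonlinearity); membership of $v$ in $L^{\frac32(\kappa-1)}_{t,x}$, which is what the statement advertises, is then recovered \emph{a posteriori} by interpolating with $\|v\|_{L^\infty_tL^{2/(1-\gamma)}_x}$, using the Sobolev embedding, and by H\"older in time when $3<\kappa\leq5$. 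Note also that for the small-data global assertion you cannot afford H\"older in time on $[0,\infty)$ as your sketch does for mismatched time exponents: the exponents must match exactly (or be interpolated against globally controlled norms), which is another reason the specific exponent choices above, rather than the $L^{\frac32(\kappa-1)}_x$-based space, are what make the argument close with the $T$-independent constants from \eqref{eqn:wedgestrich} and \eqref{eqn:globalinhom}. For $3<\kappa\leq5$ your space happens to coincide with the paper's, so your argument is fine there, but as written the proposal does not prove the corollary for $\kappa>5$.
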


The proofs of these results are essentially due to Lindblad-Sogge \cite{LiSo}.
The only complication is that the so-called ``fractional Leibniz rule'' is not
apparent in our context, so we must take additional care regarding the function
spaces we use to obtain a contraction.  In particular, we make use of the
estimate
\begin{multline}\label{eqn:inhomogstz}
  \| u \|_{L^{p}([0,T];L^{q}(X))} + \left\| ( u , \prtl_t u)
  \right\|_{L^\infty([0,T];H^\gamma(X) \times H^{\gamma-1}(X)) }\\
  \lesssim \left\| (f , g) \right\|_{H^\gamma(X)\times H^{\gamma-1}(X)} + \| F
  \|_{L^{\frac 3{2+\gamma}}\!\left([0,T];L^{\frac 6{7-4\gamma}}(X)\right)}.
\end{multline}
where $p$ and $q$ will satisfy \eqref{eqn:tripleadmiss} and $\frac 1p + \frac 2q
= 1-\gamma$.  When $3 < \kappa < 9$, we may take $p=\frac 3\gamma$ and $q=\frac
6{3-4\gamma}$ as in Lemmas 4.1 and 4.2 of \cite{LiSo} to obtain local existence
of the solution.  When $9 \leq \kappa < \infty$, we instead take $p=
\frac{3\kappa}{2+\gamma}$ and $q=\frac{6\kappa}{7-4\gamma}$, and local existence
follows from a slight adjustment of Lemma 3.6 in \cite{Sogge}.

The remainder of the theorem now follows by the same considerations as in
Theorems 5.1 and 5.2 in \cite{LiSo} once it is seen that $v \in L^{\frac
  32(\kappa-1)}([0,T]\times X)$.  When $k \geq 9$, H\"older's inequality gives
\begin{equation*}
  \|v\|_{L^{\frac 32 (\kappa-1)}([0,T]\times X)} \leq
  \|v\|_{L^{\frac{3\kappa}{2+\gamma}}\!
    \left([0,T];L^{\frac{6\kappa}{7-4\gamma}}(X)\right)}^{\theta}  
  \|v\|_{L^{\infty}\!\left([0,T];L^{\frac{2}{1-\gamma}}(X)\right)}^{1-\theta}
\end{equation*}
with $\theta = \frac{2\kappa}{(\kappa -1) (2+\gamma)}$.  The first factor on the
right is finite as it is the space used to perform the contraction.  Since
$H^\gamma(X)$ embeds into $L^{\frac{2}{1-\gamma}}(X)$, the second factor is also
finite.  A similar argument holds when $5 < \kappa \leq 9$; see
e.g. \cite[(4.17)]{LiSo}.  Finally, when $3 < \kappa \leq 5$, we have $v \in
L^{\frac 3\gamma}\!\left([0,T]; L^{\frac 32(\kappa-1)}(X)\right)$.  Since $\frac
3\gamma \geq \frac 32(\kappa-1)$ in this case, $v \in L^{\frac
  32(\kappa-1)}([0,T]\times X)$ follows by applying the H\"older inequality in
time.


\end{document}